\tikzstyle{line}=[draw]
\newtheorem{theorem}{Theorem}
\newtheorem{corollary}[theorem]{Corollary}
\newtheorem{lemma}[theorem]{Lemma}
\newtheorem{definition}{Definition}
\newtheorem{remark}[theorem]{Remark}
\newcounter{assumption}
\renewcommand{\theassumption}{A\arabic{assumption}}
\newenvironment{ass}[1][]{\begin{trivlist}\item[] \refstepcounter{assumption}%
 {\bf Assumption\ \theassumption\ {\em (#1)} } }{
 \ifvmode\smallskip\fi\end{trivlist}}
\newcommand{\df}{\stackrel{\mbox{\rm\tiny def}}{=}}
\let\inf\undefined
\DeclareMathOperator*{\inf}{\vphantom{\sup}inf}
\DeclareMathOperator*{\ex}{\mathbb E}
\DeclareMathOperator*{\sgn}{sgn}
\DeclareBoldMathCommand{\b}{b}
\DeclareBoldMathCommand{\c}{c}
\DeclareBoldMathCommand{\d}{d}
\DeclareBoldMathCommand{\e}{e}
\DeclareBoldMathCommand{\f}{f}
\DeclareBoldMathCommand{\g}{g}
\DeclareBoldMathCommand{\n}{n}
\DeclareBoldMathCommand{\m}{m}
\DeclareBoldMathCommand{\p}{p}
\DeclareBoldMathCommand{\q}{q}
\DeclareBoldMathCommand{\r}{r}
\DeclareBoldMathCommand{\s}{s}
\DeclareBoldMathCommand{\v}{v}
\DeclareBoldMathCommand{\w}{w}
\DeclareBoldMathCommand{\z}{z}
\DeclareBoldMathCommand{\A}{A}
\DeclareBoldMathCommand{\B}{B}
\DeclareBoldMathCommand{\C}{C}
\DeclareBoldMathCommand{\D}{D}
\DeclareBoldMathCommand{\F}{F}
\DeclareBoldMathCommand{\G}{G}
\DeclareBoldMathCommand{\H}{H}
\DeclareBoldMathCommand{\I}{I}
\DeclareBoldMathCommand{\J}{J}
\DeclareBoldMathCommand{\K}{K}
\DeclareBoldMathCommand{\L}{L}
\DeclareBoldMathCommand{\M}{M}
\DeclareBoldMathCommand{\P}{P}
\DeclareBoldMathCommand{\Q}{Q}
\DeclareBoldMathCommand{\R}{R}
\DeclareBoldMathCommand{\S}{S}
\DeclareBoldMathCommand{\V}{V}
\DeclareBoldMathCommand{\U}{U}
\DeclareBoldMathCommand{\W}{W}
\DeclareBoldMathCommand{\X}{X}
\DeclareBoldMathCommand{\Y}{Y}
\DeclareBoldMathCommand{\Z}{Z}
\DeclareBoldMathCommand{\ssigma}{\sigma}
\DeclareBoldMathCommand{\SSigma}{\Sigma}
\DeclareBoldMathCommand{\OOmega}{\Omega}
\DeclareBoldMathCommand{\mmu}{\mu}
\DeclareBoldMathCommand{\ones}{1}
\DeclareBoldMathCommand{\zeros}{0}
\let\top\intercal
\newcommand{\norm}[1]{\left\Vert#1\right\Vert}
\newcommand{\abs}[1]{\left\vert#1\right\vert}
\newcommand{\nat}{\mathbb N}                   
\newcommand{\one}[1]{{\mathbb I}{\{#1\}}}           
\newcommand{\cZ}{{\cal Z}}
\newcommand{\cX}{{\cal X}}
\newcommand{\cA}{{\cal A}}
\newcommand{\Reals}{\mathbb R}
\DeclareMathOperator*{\argmin}{arg\,min}
\newcommand{\TODO}[1]{
\ifmmode
\text{\textcolor{red}{TODO: #1}}
\else
\textcolor{red}{TODO: #1}
\fi
}
\title{Large-Scale Markov Decision Problems via the Linear Programming Dual}
\author{
Yasin Abbasi-Yadkori\\
Adobe Research\\
\and
Peter L. Bartlett \\
UC Berkeley\\
\and
Xi Chen\\
NYU\\
\and
Alan Malek\\
Simons Institute\\
}
\begin{document}

\maketitle


\begin{abstract}
  We consider the problem of controlling a fully specified Markov decision process (MDP), also known as the planning problem, when the state space is very large and calculating the optimal policy is intractable. Instead, we pursue the more modest goal of optimizing over some small family of policies. Specifically, we show that the family of policies associated with a low-dimensional approximation of occupancy measures yields a tractable optimization. Moreover, we propose an efficient algorithm, scaling with the size of the subspace but not the state space, that is able to find a policy with low \emph{excess loss} relative to the best policy in this class. To the best of our knowledge, such results did not exist in the literature previously. We bound excess loss in the average cost and discounted cost cases, which are treated separately. Preliminary experiments show the effectiveness of the proposed algorithms in a queueing application.  
\end{abstract}

\section{Introduction}
The  Markov Decision Process planning problem is to find a good policy given complete knowledge of the transition dynamics and loss function. Much work has been done by the reinforcement learning community; the earliest approaches with convergence guarantees date back to value iteration~\citep{Bellman-1957}, policy iteration~\citep{Howard-1960}, and other dynamic programming ideas. Another thread has been the linear programming formulation~\citep{Manne-1960}. In general, the planning problem is well understood for state-spaces small enough to permit computation of the value function
\citep{Bertsekas-2007}. However, in large state space problems, both the dynamic programming and linear program approaches are computationally infeasible as complexity scales quadratically with the number of states.

A popular approach to large-scale problems is to search for the optimal value function within the linear span of a small number of features with the hope that the optimal value function will be well approximated and will lead to a near optimal policy. Two popular methods are Approximate Dynamic Programming (ADP) and Approximate Linear Programming (ALP). For a survey on theoretical results for ADP, see \citep{Bertsekas-Tsitsiklis-1996}, \citep[Vol. 2, Chapter 6]{Bertsekas-2007}, and more recent papers \citep{Sutton-Szepesvari-Maei-2009, Sutton-Maei-Precup-Bhatnagar-Silver-Szepesvari-Wiewiora-2009, Maei-Szepesvari-Bhatnagar-Precup-Silver-Sutton-2009, Maei-Szepesvari-Bhatnagar-Sutton-2010}.

Our goal is to find an almost-optimal policy in some low dimensional space such that the complexity scales with the low dimensional space but is sublinear in the size of the state space. In contrast, all prior work on ALP either scales badly or requires access to samples from a distribution that depends on the optimal policy. To accomplish this, we will use randomized algorithms to optimize policies that are parameterized by linear functions in the dual LP. We provide performance bounds in the average loss and discounted loss cases. In particular, we introduce new proof techniques and tools for average cost and discounted cost MDP problems and use these techniques to derive a reduction to stochastic convex optimization with accompanying error bounds.

\subsection{Markov Decision Process}
Markov decision processes have become a popular approach to modeling an agent interacting with an environment, and, most notably, are the model assumed by reinforcement learning. Using $[N]\:=\{1,\ldots, N\}$, an MDP is parameterized by:
\begin{enumerate}
\item a discrete state space $\{1,2,\ldots, \cX\}$,
\item a discrete action space $\{1,2,\ldots, \cA\}$,
\item transition dynamics $P:[\cX]\times[\cA]\rightarrow\triangle_{[\cX]}$ that describes the distribution of the next states $x'$ given a current state and action $(x,a)$, and
\item loss function $\ell:[\cX]\times [\cA] \rightarrow [0,1]$ that provides the cost of taking an action in a given state.
\end{enumerate}
The (fully observed) state encapsulates all the persistent information of the environment, and the influence of the agent is captured through the transition distribution, which is a function of the current state and the current action.

A policy $\pi:[\cX]\rightarrow \triangle_{[\cA]}$ gives a distribution over actions for every possible state, and the goal of the learner is to identify a policy with small loss. Throughout, we will use $x$ and $a$ to refer to specific states and actions, respectively. Given some random variable $X_0$ for the starting distribution and some fixed policy $\pi$, the distribution of the random variable of the initial action $A_0$ is fixed. Then, given the transition dynamics $P$ and $\pi$, we can calculate the random trajectory $X_1, A_1, X_2, A_2,\ldots$. The random variables $X_t$ and $A_t$ will always refer to the random state and actions induced by a fixed policy $\pi$, the transition dynamics, and initial distribution of $X_0$. Using this random variable notation, we will write $P(X_{t+1} = x' | X_t = x, A_t = a)$ to refer to the $x'$th entry of $P(x,a)$, i.e. the probability of transitioning to state $x'$ from state $x$ when action $a$ is taken.

How can we evaluate a policy? The two most common metrics are average cost and discounted cost. Average cost is roughly the expected loss of the policy once the Markov chain has reached stationarity and disregards the transient dynamics. Discounted cost minimizes the cost where future losses $t$ rounds into the future are discounted by $\gamma^t$, where $\gamma\in(0,1)$ is some discounting factor. Therefore, discounted cost emphasized the short-term cost and roughly only considers $1/(1-\gamma)$ rounds into the future. Precisely,
\begin{align}
\lambda_\pi(x) &\df \lim_{n\rightarrow\infty}\ex\left[\frac{1}{n}\sum_{t=0}^n \ell(X_t,\pi(X_t)) \ \middle| \ X_0 = x\right] &\text{(average cost), and}\\
J_\pi(x) &\df \ex\left[\sum_{t=0}^\infty \gamma^t \ell(X_t,\pi(X_t)) \ \middle| \ X_0 = x\right] &\text{(discounted cost)}
\end{align}
The initial state is very relevant for $J$ but irrelevant for $\lambda$ under the usual regularity (it is sufficient to assume that the induced Markov chain is recurrent \citep{Puterman:MDP}). We study the average cost in Section~\ref{sec:average_cost} and the discounted cost in Section~\ref{sec:discounted_cost}.

\subsection{Notation}
It will be convenient to be able to write the transition dynamics as a matrix multiplication. For vectors $v\in\Reals^{\cX \cA}$ over state-action pairs, we will write $v(x,a)$ for the element corresponding to state $x$ and action $a$. The specific mapping from $(x,a)$ to $\{1,\ldots, \cX\cA\}$ is irrelevant, so just pick one and fix your favorite. We can then define the matrix $P \in \Reals^{\cX\cA\times \cX}$ to have row $P(x,a)$ in the $(x,a)$ position; therefore, if $v$ is a probability distribution over $X_t, A_t$, then $v^\top P \in \triangle_{\cX}$ is the distribution over $X_{t+1}$. We can also define the vector of losses $\ell$ to have value $\ell(x,a)$ in position $x,a$.

Given a vector $v$ and a matrix $M\in\Reals^{\cX\times\cX}$, we will use $v(i)$ for the $i$th component of vector $v$ and $M_{i,:}$, $M_{:,j}$, and $M_{ij}$ for the the $i$th row, $j$th column, and element in the $i,j$ position of $M$, respectively. For matrices $M\in\Reals^{\cX\cA\times\cX}$, where the first index is over state-action pairs, we will define $M_{(x,a),:}$ to be the row corresponding to $(x,a)$ and $M_{:,x}$ to be the column, over state-action pairs, corresponding to the $x$th column. 

Any distribution over state-action pairs $\mu$ defines a policy $\pi_\mu$ with
\begin{equation}
\pi_\mu(a|x) = \frac{\mu(x,a)}{\sum_{a'\in[\cA]}  \mu(x,a')},
\end{equation}
with $\pi_\mu(a|x) = \cA^{-1}$ if $\mu(x,a) =0$ for all $a$. This is simply the conditional distribution of $A$ given $X$.
We will also define the marginalization matrix $B\in \{0,1\}^{\cX\cA\times \cX}$ to be the binary matrix such that the $x$th coordinate of $v^\top B$ is $\sum_a v(x,a)$. If $v$ is a probability distribution over $X_t, A_t$, then $v^\top B$ is the marginal of $X_t$. 

For some fixed policy $\pi$, we would also like to refer to the induced state transition matrix, $P^\pi$, defined by 
\[
  (P^\pi)_{x,x'} = \sum_{a}P(X_{t+1}=x'|X_t=x,A_t=a)\pi(A_t=a|X_t=x),
\]
so that if $X_t\sim v$, then $X_{t+1}\sim {P^\pi}^\top v$ if policy $\pi$ is used.

We will use the norms $\norm{v}_{1,c} = \sum_{i} c_i\abs{v_i}$ and $\norm{v}_{\infty, c} = \max_i c_i \abs{v_i}$ (for a positive vector $c$). The constant one and zero vector are  $\mathbf 1$ and $\mathbf 0$, and  $\wedge$ and $\vee$ refer to the element-wise minimum and maximum. We can then compactly define $[v]_{-}=v \wedge 0$ and $[v]_+ = v \vee 0$ as the negative and positive parts of a vector $v$, respectively. Finally, $v \le w$ for two vectors means element-wise inequality, i.e. $v_i \le w_i$ for all $i$.

\subsection{Linear Programming for Average Cost}
For the average cost, let $h\in\Reals^\cX$ be a vector and $\lambda\in\Reals$ a scalar. The \emph{Bellman operator for average cost} is
\begin{equation*}
\L h(x)\df
\min_{a \in [\cA]} \left[ \ell(x,a) + \sum_{x'\in\cX} P_{(x,a),x'} h(x') \right],
\end{equation*}
and $h$ and $\lambda$ correspond to an optimal policy if they satisfy the Bellman optimality equation,
\begin{equation*}
\lambda + h(x) = \L h(x)\quad\forall x.
\end{equation*}
We will call such an $h$ and $\lambda$ the differential value function and the average cost, respectively. When the Bellman optimality equation is satisfied, the greedy policy (taking the action that achieves the minimum in the operator with probability 1) achieves the optimal loss~\citep{Puterman:MDP}. 

The Bellman optimality equation was first recast as a linear program by \cite{Manne-1960}, who noted that, if
$\lambda$ and $h$ satisfy $\L h\geq h+\lambda\ones$, then we must have $\lambda\leq\lambda^*$, where $\lambda^*$ is the average cost of the optimal policy. Therefore, the optimal $\lambda$ and $h$ are the solution to
\begin{align*}
&\max_{\lambda, h} \lambda\,, \\
\notag
&\mbox{s.t.}\quad h+\lambda\ones\leq \L h.
\end{align*}
Now, notice that $h(x)+\lambda \leq \min_{a}\left[\ell(x,a)+\sum_{y}P(y|x,a)h(y)\right]$ is equivalent to requiring
$h(x)+\lambda\leq\ell(x,a)+\sum_{y}P(y|x,a)h(y)$ for all $x$ and $a$. In our matrix notation, this is precisely $B(\lambda\ones+h)\leq\ell+P h$. Hence, the Bellman optimality equation is equivalent to the linear program
\begin{align}
\label{LP:exact_average}
&\max_{\lambda, h} \lambda\,, \\
\notag
&\mbox{s.t.}\quad B(\lambda \mathbf 1 + h) \le \ell + P h \,.
\end{align}
A standard computation shows that the dual of LP~\eqref{LP:exact_average} has the form of
\begin{align}
\label{LP:exact_average_dual}
&\min_{\mu\in \Reals^{\cX \cA}} \mu^\top \ell\,, \\
\notag
&\mbox{s.t.}\quad \mu^\top \ones = 1,\, \mu \geq \mathbf 0,\, \mu^\top (P - B) = \mathbf 0 \; ,
\end{align}
The dual variable, $\mu$, has an important interpretation: it is a stationary distribution over state-action pairs under its implied policy.

The first two constraints ensure that $\mu$ is a probability distribution over state-action space and the third constraint forces $\mu$ to be a stationary distribution under $\pi_{\mu}$. Intuitively, if $X_t\sim \mu^\top B$, then $\mu^\top P$ is the distribution of $X_{t+1}$ under policy $\pi_{\mu}$; hence, the third constraint implies that $X_t$ and $X_{t+1}$ have the same distribution. If $\mu$ is a stationary distribution, then the average loss under $\mu$ is exactly $\mu^\top \ell$.

\subsection{Linear Programming for Discounted Cost}
There are analogous notions for the discounted cost setting. We define a value function $J:  [\cX]\rightarrow \Reals$ as a mapping from states to discounted costs. The hope is to find $J^*$, where $J^*(x)$ is the discounted cost starting in state $x$ if the optimal policy is used.

We define the \emph{Bellman operator for discounted cost}
\begin{equation*}
\L^\gamma J(x)\df
\min_{a \in [\cA]} \left[ \ell(x,a) + \gamma \sum_{x'\in[\cX]} P_{(x,a),x'} J(x') \right]
\end{equation*}
and the optimal value function will be the fixed point of the Bellman operator,
\begin{equation*}
  L^\gamma J^* = J^*.
\end{equation*}
It is easy to check that $J\leq \L^\gamma J$ implies $J\leq J^*$, and therefore, for any strictly positive vector $\alpha \in\Reals^\cX$, the optimal value function is the solution to the linear program
\begin{align}
\label{LP:exact_discounted}
&\max_{J} \alpha^\top J\\
\notag
&\mbox{s.t.}\quad \L^\gamma J\geq J.
\end{align}

We also have an interpretable dual LP. Let $\alpha$ be such that $\alpha\ge 0$ and $\alpha^\top \mathbf{1} = 1$. The linear program for discounted MDPs in the dual space has the form of
\begin{align}
\label{LP:exact_discounted_dual}
&\min_{\nu\in\Reals^{\cX \cA}} \nu^\top \ell\,, \\
\notag
&\mbox{s.t.} \quad (B - \gamma P)^\top \nu  = \alpha,\quad \nu\geq 0, \quad \nu^\top \ones = \frac{1}{1-\gamma}.
\end{align}

Unlike the average cost case, the dual variable $\nu$ cannot be interpreted as a stationary distribution. However, it can be thought of as the discounted number of visits, as made explicit in the  following theorem from \citet{Puterman:MDP}:
\begin{theorem}
\label{thm:dual-lp-mdp}
\begin{enumerate}
\item  For each randomized Markovian policy $\pi$ and state $x$ and action $a$, define $\nu_\pi(x,a)$ by
\[
\nu_\pi(x,a) = \sum_{x'} \alpha (x') \sum_{t=1}^\infty \gamma^{t-1} P^\pi (x_t = x, a_t = a \ | \ x_1 = x') \;.
\]
Then $\nu_\pi$ is a feasible solution to the dual problem.
\item Suppose $\nu$ is a feasible solution to the dual problem, then, for each $x\in[\cX]$, $\sum_a \nu(x,a) > 0$. Define the randomized stationary policy $\pi_\nu$ by
\[
\pi_\nu(a | x) = \frac{\nu(x,a)}{\sum_{a'}\nu(x,a')} \;.
\]
Then, $\nu_{\pi_\nu}$ is a feasible solution to the dual LP and $\nu_{\pi_\nu} = \nu$.
\end{enumerate}
\end{theorem}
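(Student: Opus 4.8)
The plan is to verify the three dual constraints directly for Part 1, and to establish a bijection between feasible points and stationary policies for Part 2 through a uniqueness argument. For \textbf{Part 1}, nonnegativity $\nu_\pi\ge 0$ is immediate since every summand is a product of the nonnegative quantities $\alpha(x')$, $\gamma^{t-1}$, and a probability. For the normalization $\nu_\pi^\top\ones = 1/(1-\gamma)$, I would sum over $(x,a)$, interchange the order of summation (justified by nonnegativity), use $\sum_{x,a}P^\pi(x_t=x,a_t=a\mid x_1=x')=1$ for each $t,x'$, and then evaluate the geometric series $\sum_{t\ge 1}\gamma^{t-1}=1/(1-\gamma)$ together with $\sum_{x'}\alpha(x')=1$. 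The crux is the balance constraint $(B-\gamma P)^\top\nu_\pi=\alpha$. Reading the left side at a state $y$ and using $B_{(x,a),y}=\one{x=y}$ gives $\sum_a\nu_\pi(y,a)-\gamma\sum_{x,a}P(y\mid x,a)\nu_\pi(x,a)$. I would apply the one-step identity $\sum_{x,a}P(y\mid x,a)P^\pi(x_t=x,a_t=a\mid x_1=x')=P^\pi(x_{t+1}=y\mid x_1=x')$ to the second term and reindex $t\mapsto t+1$; the two resulting discounted sums telescope, leaving only the $t=1$ term $\sum_{x'}\alpha(x')\one{x'=y}=\alpha(y)$.

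For \textbf{Part 2}, reading the balance constraint componentwise gives $\sum_a\nu(y,a)=\alpha(y)+\gamma\sum_{x,a}P(y\mid x,a)\nu(x,a)$; since $\nu\ge 0$, $P\ge 0$, and $\alpha$ is strictly positive, the right side is at least $\alpha(y)>0$, so $d_\nu(y):=\sum_a\nu(y,a)>0$ and $\pi_\nu$ is well-defined. Writing $\pi=\pi_\nu$, by construction $\nu(x,a)=d_\nu(x)\pi(a\mid x)$. Substituting this factorization into the componentwise balance constraint and using $\sum_a P(y\mid x,a)\pi(a\mid x)=(P^\pi)_{x,y}$ shows $d_\nu=\alpha+\gamma(P^\pi)^\top d_\nu$, i.e. $(I-\gamma(P^\pi)^\top)d_\nu=\alpha$. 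Because $P^\pi$ is row-stochastic and $\gamma<1$, the matrix $\gamma(P^\pi)^\top$ has spectral radius $\gamma<1$, so $I-\gamma(P^\pi)^\top$ is invertible and $d_\nu=\sum_{k\ge 0}(\gamma(P^\pi)^\top)^k\alpha$ is the \emph{unique} solution. On the other side, under the stationary policy $\pi$ the joint law factors as $P^\pi(x_t=x,a_t=a\mid x_1=x')=P^\pi(x_t=x\mid x_1=x')\pi(a\mid x)$, so the definition of $\nu_\pi$ yields $\nu_\pi(x,a)=\pi(a\mid x)\,d'(x)$ where $d'$ is the state marginal of $\nu_\pi$, and the same geometric-series computation identifies $d'=\sum_{k\ge 0}(\gamma(P^\pi)^\top)^k\alpha$. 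Hence $d'=d_\nu$, giving $\nu_\pi(x,a)=d'(x)\pi(a\mid x)=d_\nu(x)\pi(a\mid x)=\nu(x,a)$ for all $(x,a)$; feasibility of $\nu_{\pi_\nu}$ then follows from Part 1.

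The routine parts are bookkeeping; the two ideas that carry the argument are the telescoping of the discounted occupancy sum in Part 1, which collapses the balance constraint onto the initial distribution $\alpha$, and, in Part 2, the invertibility of $I-\gamma(P^\pi)^\top$, which is precisely what upgrades the statement ``both $\nu$ and $\nu_{\pi_\nu}$ have state marginals solving the same balance equation'' into the genuine equality $d'=d_\nu$. I expect the main obstacle to be the care needed in justifying the interchange of the infinite sum over $t$ with the finite sums over $(x,a)$ and over $x'$: absolute convergence from nonnegativity together with the geometric decay $\gamma^{t-1}$ is what makes both the telescoping in Part 1 and the Neumann-series identity in Part 2 legitimate.
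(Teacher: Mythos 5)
Your proof is correct, and it is essentially the standard argument: the paper itself does not prove this theorem but imports it verbatim from \citet{Puterman:MDP}, where the proof proceeds exactly as you do --- telescoping the discounted occupancy sum against the balance constraint for Part 1, and for Part 2 factoring $\nu(x,a)=d_\nu(x)\pi_\nu(a\mid x)$ and invoking invertibility of $I-\gamma (P^\pi)^\top$ to identify the state marginals. The only point worth flagging is that your positivity claim $\sum_a\nu(x,a)\geq\alpha(x)>0$ needs $\alpha$ strictly positive; the paper's dual LP section only states $\alpha\geq 0$, but strict positivity is assumed in the primal section and in Puterman, so your assumption is the right one.
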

Thus, we can approximately solve the planning problem if we find a vector $z$ such that the discounted cost of the policy defined by $z$, namely $\ell^\top \nu_{\pi_z}$, is small. To handle possibly negative entries of $z$, we more generally define
\[
\pi_z(a | x) = \frac{\left[z(x,a)\right]_+}{\sum_{a'}\left[z(x,a')\right]_+} \;.
\]
In this case, the precise relationship between $\nu_{\pi_z}$ and the value function can be found in  \cite{Puterman:MDP}: for any vector $z$,
\begin{equation}
  \sum_{x,a} \nu_{\pi_{z}}(x,a)=\frac{1}{1-\gamma} \quad\text{and}\quad
  \nu_{\pi_z}^T \ell = \alpha^T J_{\pi_z},
  \label{eq:primal_dual_rel}
\end{equation}
where $J$ is the value function corresponding to policy $\pi_z$.

\subsection{Approximate Linear Programming}
If we ignore computational constraints, we can solve the planning problem by solving the linear programs  \eqref{LP:exact_average_dual} and \eqref{LP:exact_discounted_dual}. Unfortunately, state spaces are frequently very large and often grow exponentially with the complexity of the system (e.g. number of queues in the queuing network), and therefore any method polynomial in $\cX$ becomes intractable. The general method of solving the planning problem with an approximate solution to the linear program is called Approximate Linear Programming (ALP). As any general optimality guarantee is impossible with computation sublinear in $\cX$ without special knowledge of the problem, we instead aim for optimality with respect to some smaller policy class.

We take the less common approach of reducing the dimensionality by placing a subspace restriction of the dual variables. Let $\Phi\in\Reals^{\cX\cA\times d}$ by a feature matrix and $\mu_0$ some known stationary distribution (that can be taken to be zero but allows a user to start with a good policy). For the average cost case, we will limit our search to $\mu = \mu_0 + \Phi\theta$ for $\theta\in\Theta\subset\Reals^d$; that is, we will study the \emph{approximate average cost dual LP},
\begin{align}
\label{eq:dual-apprx}
&\min_{\theta\in\Theta}~ (\mu_0 + \Phi \theta)^\top  \ell\,, \\
\notag
&\mbox{s.t.}\quad (\mu_0+\Phi\theta)^\top  \mathbf 1 = 1,\, \mu_0+ \Phi \theta \geq \mathbf 0,\, (\mu_0+\Phi\theta)^\top (P - B) = \mathbf 0 \; .
\end{align}
we will only consider $\theta$ that sum to 1 and will restrict $\Theta$ to lie in $\{x\in\Reals^d: x^\top\ones = 1\}$. This restriction is without loss of generality, since we may always renormalize $\Phi$. 

For every $\theta$, we associate a policy
\begin{equation}
\label{eq:average_cost_policy}
\pi_{\theta}(a|x) = \frac{[\mu_0(x,a) + \Phi_{(x,a),:} \theta]_{+}}{\sum_{a'} [\mu_0(x,a') + \Phi_{(x,a'),:} \theta]_{+}}
\end{equation}
and a stationary distribution $\mu_\theta$ the actual stationary distribution of running policy $\pi_\theta$. Thus, the average cost corresponding to the policy $\pi_\theta$ is $\ell^\top\mu_\theta$.

For the discounted cost case with feature matrix $\Phi$, we restrict the dual variable to $\nu = \Phi\theta$ and define the \emph{approximate discounted cost dual LP}
\begin{align*}
&\min_{\theta\in\Theta}~ \ell^\top\Phi\theta\,, \\
\notag
&\mbox{s.t.} \quad(\Phi\theta)^\top \ones = \frac{1}{1-\gamma},\quad (B - \gamma P)^\top \Phi\theta  = \alpha,\quad \Phi\theta\geq 0.
\end{align*}
For every $\theta$, we define a policy
\begin{equation}
\label{eq:discounted_cost_policy}
\pi_{\theta}(a|x) = \frac{[\Phi_{(x,a),:} \theta]_{+}}{\sum_{a'} [\Phi_{(x,a'),:} \theta]_{+}},
\end{equation}
and let $\nu_\theta$ be the corresponding dual variable (i.e. the discounted number of visits); hence,  $\ell^\top\nu_\theta$ is the discounted cost as in \eqref{eq:primal_dual_rel}. In the discounted case, we will restrict $\Theta$ to lie in $\{x\in\Reals^d: x^\top\ones = (1-\gamma)^{-1}\}$.

\subsection{Problem Definition}
The goal of the paper is to find a $\theta$ such that the associated policy $\pi_{\theta}$ is close to the policy corresponding with the best $\theta \in\Theta$ in an efficient manner and while avoiding complexity proportional to $\cX$. This goal is formalized by the following definition.
\begin{definition}[Efficient Large-Scale Dual ALP]
\label{defn:ELALP}
For an MDP specified by $\ell$ and $P$ with the dual variables $\xi_\theta$ corresponding to $\theta\in\Theta$,
the efficient large-scale dual ALP problem is to find a $\widehat\theta$ such that
\begin{equation}\label{eqn:ELALP}
\ell^\top\xi_{\widehat \theta}
  \leq \min\left\{ \ell^\top\xi_\theta : \text{$\xi_\theta$ feasible
  for \eqref{LP:exact_average_dual} or \eqref{LP:exact_discounted_dual}
}\right\} + O(\epsilon)
\end{equation}
in time polynomial in $d$ and $1/\epsilon$. The model of computation
allows access to arbitrary entries of $\Phi$, $\ell$,
$P$, $\mu_0$, $P^\top\Phi$, and $\ell^\top\Phi$ in unit time.
\end{definition}
The computational complexity cannot scale with $\cX$ and we do not assume
any knowledge of the optimal policy. In fact, as we shall see, we
solve a harder problem, which we define as follows.

\begin{definition}[Expanded Efficient Large-Scale Dual ALP]
\label{defn:E.ELALP}
Let $V:\Reals^d\to\Reals_+$ be some ``violation function'' that
represents how far $\xi_\theta$ is from satisfying the constraints of \eqref{LP:exact_average_dual} or \eqref{LP:exact_discounted_dual} and has $V(\theta)=0$ if $\theta$ is feasible.

The expanded efficient large-scale dual ALP
problem is to produce parameters $\widehat\theta$ such that
\begin{equation}
\label{eqn:E-ELALP}
\ell^\top\xi_{\widehat \theta} \leq  \min_{\theta\in\Theta} \{ \ell^\top\xi_\theta
+V(\theta)\} +O(\epsilon),
\end{equation}
in time polynomial in $d$ and $1/\epsilon$, under the same model of
computation as in Definition~\ref{defn:ELALP}.
\end{definition}

Note that the expanded problem is strictly more general as guarantee
\eqref{eqn:E-ELALP} implies guarantee \eqref{eqn:ELALP}. Also, many
feature vectors $\Phi$ may not admit any feasible points. In this
case, the dual ALP problem is trivial, but the expanded problem is
still meaningful.

In particular, we desire an agnostic learning guarantee, where the true average cost of running the policy corresponding to $\widehat\theta$ to be close to the true average cost of the best policy in the class, regardless of how well the policy class models the optimal value function. To the best of our knowledge, such a guarantee does not exist in the literature.

Having access to arbitrary entries of the quantities in
Definition~\ref{defn:ELALP} arises naturally in many situations.
In many cases, entries of $P^\top\Phi$ are easy to compute.
For example, suppose that for any state $x'$ there are a small number of state-action pairs $(x,a)$ such that $P(x'|x,a)>0$. Consider Tetris; although the number of board configurations is large, each state has a small number of possible neighbors. Dynamics specified by graphical models with small connectivity also satisfy this constraint. Computing entries of $P^\top\Phi$ is also feasible given reasonable features. If a feature $\phi_i$ is a stationary distribution, then $P^\top\phi_i=B^\top \phi_i$. Otherwise, it is our prerogative to design sparse feature vectors, hence making the multiplication easy. We shall see an example of this setting later.

\subsection{Related Work}
Approximate linear programming, proposed by \cite{Schweitzer-Seidmann-1985}, constrained the value function in the linear program to a low-dimensional subspace. In the discounted cost setting, the first theoretical analysis of ALP methods, by \cite{DeFarias-VanRoy-2003}, analyzed the discounted primal LP \eqref{LP:exact_discounted_dual} performance
when only value functions of the form $J=\Psi w$, for some feature matrix $\Psi$, are considered. Roughly, they show that the ALP solution $w^*$ has the family of error bound indexed by a vector $u\in\Reals^\cX$
\[
\norm{\J^*-\Psi w^*}_{1,c}\leq \inf_{w}\frac{2c^\top u}{1-\gamma\beta(u)}\norm{\J^*-\Psi w},
\]
where $c$ is a ``state-relevance'' vector and $\beta_u = \gamma \max_{x,a} \sum_{x'} P_{(x,a), x'} u(x')/u(x)$ is a ``goodness-of-fit'' parameter that measures how well $u$ represents a stationary distribution. Unfortunately, $c$ and $u$ are typically hard to choose (for example, a good choice of $c$ would be the stationary distribution under $w^*$, which we do not know); but more importantly, the bound can be vacuous if $\Psi$ does not model the optimal value function well and $\norm{J_* - \Psi w}$ is always large. In particular, the problem we are considering in Definition~\ref{defn:E.ELALP} requires an additive bound with respect to the optimal parameter.

There are also computational concerns with the ALP, as the number of constraints remains $O(\cX\cA)$. One solution, proposed by \cite{DeFarias-VanRoy-2004}, was to sample a small number of constraints and solve the resulting LP; this resulted in an error bound of the form
\[
  \norm{J_* - \Psi \widehat w}_{1,c} \le \norm{J_* - \Psi  w_*}_{1,c} + \epsilon \norm{J_*}_{1,c},
\]
but the required number of sampled constraints needs to be a function of the stationary distribution of the optimal policy.

\cite{Desai-Farias-Moallemi-2012} proposed a different relaxation by defining the \emph{Smoothed Approximate Linear Program}, which only requires a soft feasibility and solves the linear program
\begin{align*}
&\max_{J} c^\top \Psi w\\
\notag
&\mbox{s.t.}\quad \L^\gamma \Psi w + s \geq \Psi w,\; s \geq 0, \;\nu^\top_{\pi_{*},\alpha} s\leq D
\end{align*}
which is exact LP \eqref{LP:exact_discounted} with $J = \Psi w$, the Bellman optimality constraint relaxed with a slack variable $s$, and additional bounds places on $s$. Here, $\nu^\top_{\pi_{*},\alpha}$ is the stationary distribution of the optimal policy and $D$ a violation budget, so the method requires some knowledge of the optimal policy. Despite this, the method remains computationally efficient and able to produce an agnostic approximation bound
\[
  \Vert J_* - \Psi w_*\Vert \leq \inf_w \Vert J_* - \Psi w\Vert O(1).
\]
However, their results do not easily extend to bounding the true error of running the policy associated with $w^*$, $\norm{J_{\nu_{\Psi w_*}}-J_*}$, without choosing $c$ as a function of $w^*$, which is itself a function of $c$. \cite{Petrik-Zilberstein-2009} proposed two different constraint relaxations schemes for the ALP, but did not show better approximations to the true solution, but rather focused on the better empirical performance. Yet another relaxation of the primal LP was proposed by \cite{lakshminarayanan2018linearly}, who generalized previous constraint sampling approaches. A bound for the discounted loss of the policy associated with the solution to this relaxed LP is presented and neatly decomposes into an estimation error that tends to zero and an approximation error between the optimal LP solution and the optimal relaxed LP solution.

In the average cost setting, largely thought to be more difficult, shares a similar history is that the first theoretical analysis for ALP was by \cite{deFarias-VanRoy-NIPS-2003}. They proposed a two stage LP. The first  approximates the optimal average cost and the second uses this estimate to try and learn the differential cost function $h$. The method suffers from the same problem as the discounted cost case is that we can only guarantee that $\lambda_{\hat w} - \lambda^*$, the excess loss of running the policy associated with $\hat w$, is small when we tune the LP with knowledge of $\mu_{\hat w}$, the stationary distribution.

Subsequent work in \cite{DeFarias-VanRoy-2006} took a different approach by viewing the average cost LP as a perturbed discounted cost LP, which is easier to analyze. Again, the span of the feature vectors needs to approximate the optimal policy in order for the excess loss guarantee to be meaningful. More recently, \cite{Veatch-2013} proposed a relaxation, similar to the smoothed ALP but with the total constraint violation terms entering the objective instead of facing a hard constraint, and derived similar loss bounds. 

Recently, \cite{chen2018scalable} analyzed a linearly parameterized ALP where the state and action spaces are both parameterized by linear features and the value function is assumed to be well approximated by linear function of the state features. They propose an efficient algorithm but suffer the same drawback and retain an error term of the form $\min_w\norm{\Psi w - J^*}$. Additionally, \cite{banijamali2018optimizing} study the related problem of optimizing policies in the convex hull of base policies. This problem can be seen as a special case of the usual ALP formulation when all features correspond to the stationary distribution of policies. 

To the best of our knowledge, no work has been able to show a bound of the form \eqref{eqn:E-ELALP}, as all the previous bounds are only meaningful when the approximate policy class can closely approximate the optimal policy. We are also the first to prove theoretical guarantees when the dual variables of the LP are restricted to a linear class, though such a parameterization appeared previously by \cite{Wang-Lizotte-Bowling-Schuurmans-2008}, albeit without theoretical guarantees. See Section~\ref{sec:related_work} in the appendix for a more thorough literature review and precise statements of prior bounds.

\subsection{Our Contributions}
We prove that if we parameterize the policy space by using the approximate dual LPs, then we can solve the expanded efficient large-scale dual ALP problem for both average cost and discounted cost. In the average cost setting, we require a (standard) assumption that the distribution of states under any policy converges quickly to its stationary distribution, but no such assumption is needed in the discounted cost setting. We also show that it suffices to solve the approximate dual LPs by approximately minimizing a surrogate loss function equal to the sum of the objective and a scaled violation function. 

We begin with the average cost in Section~\ref{sec:average_cost} and prove that, for some parameter $H>0$, any $\epsilon >0$ and $\delta>0$, the excess loss bound
\begin{align*}
\mu_{\widehat\theta}^\top \ell \le& \min_\theta \mu_\theta^\top \ell + H V(\theta) + O\left(\frac{1}{H} \log\left(\frac1\delta\right)\right) + O(\epsilon)
\end{align*}
holds with probability at least $1-\delta$, where
$V(\theta) = \norm{[\mu_0 + \Phi\theta]_{-}}_1 + \norm{(P - B)^\top
  (\mu_0 + \Phi\theta)}_1$. The $V(\theta)$ term is zero for feasible
points (that is, points in the intersection of the feasible set of
LP~\eqref{eq:dual-apprx} and the span of the features). For points
outside the feasible set, these terms measure the extent of constraint
violations for the vector $\mu_0 + \Phi\theta$, which indicate how
well stationary distributions can be represented by the chosen
features.

However, optimizing the excess loss bound to obtain the guarantee of Definition~\ref{defn:E.ELALP} requires us to tune $H$ correctly (in particular, setting $H \approx V(\theta)^{-1/2}$). Unfortunately, the convex surrogate is not jointly convex in $\theta$ and $H$. In Section~\ref{sec:finding_H}, we present and analyze a \emph{meta-algorithm} that solves the convex surrogate for a grid on $H$ values and returns a $\widehat\theta$ that has
\begin{equation*}
  \ell^\top\mu_{\widehat\theta}
  \leq
    \min_{\theta\in\Theta} \ell^\top\mu_{\theta}+ O\left(\sqrt{V(\theta)}\right)
    +O(\epsilon).
  \end{equation*}
  We emphasize that this bound is on the loss of actually running the $\pi_\theta$ policy, which could differ from the surrogate used in the optimization, $\ell^\top(\mu_0 + \Phi\theta)$. The run-time, up to logarithmic factors, is $O(\epsilon^{-4})$ for both algorithms; we essentially can tune $H$ for a small logarithmic cost.

  As we have seen in the related works section, all previous guarantees for efficient ADP algorithms only had meaningful guarantees when the policy class closely approximates the true value function,
  and many algorithms required tuning (say, of the state relevance weights) with knowledge of the optimal policy or stationary distribution. These restrictions render previous guarantees meaningless in many modern reinforcement learning systems, where the optimal value function is completely unknown and it is hopeless to try to engineer features that can approximate it \citep{goodfellow2016deep}. Our algorithm have guarantees that are meaningful in this setting, as we can obtain near-optimal excess loss within the policy class; in fact, one can use the stationary distribution of existing policies (based on DQN, heuristics, etc.) as feature vectors and improve upon them.

We then turn to the discounted cost problem in Section~\ref{sec:discounted_cost}. We propose an algorithm and show that it guarantees a bound on the discounted cost of the form
\begin{align*}
  \ell^{\top} \nu_{\widehat\theta_T}
  &\leq
    \ell^{\top} \nu_{\theta} +
    \left( \frac{6}{1-\gamma} + H \right)
    V(\theta)
    +O\left(\frac{1}{H(1-\gamma)}\right)+O(\epsilon).
\end{align*}
Furthermore, the \emph{meta-algorithm}, with minimal modification, solves the Expanded Efficient Large-Scale Dual ALP problem by obtaining the bound 
\begin{equation*}
  \ell^\top\nu_{\theta_{\hat k}} \leq \min_\theta \ell^\top\nu_\theta
  + O\left(\sqrt{V(\theta)}\right)
  + O(\epsilon),
\end{equation*}
where the violation function for the discounted cost is $V(\theta) = \|\left[\Phi \theta \right]_{-}\|_1+\|(B-\gamma P)^T \Phi \theta  -\alpha\|$.
  
  Section~\ref{sec:experiments} then demonstrates the effectiveness of our method on a well studied example from queuing theory, the Rybko-Stolyar queue. We show that using two simple heuristic policies with a small number of simple features provides good performance.

\section{The Dual ALP for Average Cost}
\label{sec:average_cost}
  Is this section, we propose and analyze our solution to the Expanded large-scale MDP problem for average cost. As discussed in the introduction, there are two main challenges for solving the planning problem in its LP formulation: the optimization is in dimension $\cX$, and there are $O(\cX\cA)$ constraints, which is intractable in the large state-space setting.

  We solve the two challenges by projecting the dual LP onto a subspace and by approximately solving the optimization using stochastic gradient descent, respectively. Unlike previous approaches for the primal LP, we show that an approximate solution in the dual allows us to bound the excess loss, i.e.\ one that controls the error between our approximate solution and the best solution in some approximate policy class, and thereby solve Equation~(\ref{eqn:E-ELALP}). We also provide some interpretation of the approximations we make.

Recall that, for a matrix $\Phi$ and a known stationary distribution $\mu_0$ (which may be set to zero if no distribution is known), we defined the dual ALP
\begin{align*}
&\min_{\theta} (\mu_0+  \Phi \theta)^\top \ell\,, \\
\notag
&\mbox{s.t.}\quad (\mu_0+  \Phi \theta)^\top \mathbf 1 = 1,\, \mu_0 + \Phi \theta \geq \mathbf 0,\, (\mu_0+  \Phi \theta)^\top (P - B) = \mathbf 0 \;
\end{align*}
and associated every $\theta$ with the policy
\begin{equation*}
\pi_{\theta}(a|x) = \frac{[\mu_0(x,a) + \Phi_{(x,a),:} \theta]_{+}}{\sum_{a'} [\mu_0(x,a') + \Phi_{(x,a'),:} \theta]_{+}} \,.
\end{equation*}
We denote the stationary distribution of this policy $\mu_{\theta}$, which is only equal to $\mu_0+\Phi\theta$ if $\theta$ is in the feasible set.

\subsection{A Reduction to Stochastic Convex Optimization}
Unfortunately, the ALP \eqref{eq:dual-apprx} still has $O(\cX\cA)$ constraints and cannot be solved exactly. Instead, we will use the penalty method to form an unconstrained convex optimization that will act as a surrogate for the original problem and show that it is a finite sum, e.g. equal to $\sum_{i=1}^N f_i(\theta)$. Therefore, we can apply the extensive literature of solving finite sum problems with stochastic subgradient descent methods.

  To this end, for a constant $H\geq 1$, define the following convex cost function by adding a multiple of the total constraint violations to the objective of the LP~\eqref{eq:dual-apprx}:
\begin{equation}\label{eq:objective_function}
\begin{split}	
c(\theta) &\df \ell^\top (\mu_0 + \Phi \theta) + H \norm{[\mu_0 + \Phi\theta]_{-}}_1 + H \norm{(P - B)^\top (\mu_0 + \Phi\theta)}_1 \\
&= \ell^\top (\mu_0 + \Phi \theta) + H \norm{[\mu_0 + \Phi\theta]_{-}}_1+ H \norm{(P - B)^\top \Phi\theta}_1 \\
&= \ell^\top (\mu_0 + \Phi \theta) + H \sum_{(x,a)} \abs{ [\mu_0(x,a) + \Phi_{(x,a),:}\theta]_{-}}
+ H \sum_{x'} \abs{(P - B)_{:,x'}^\top \Phi \theta} \; .
\end{split}
\end{equation}
We justify using this surrogate function as follows. Suppose we find a near optimal vector $\widehat \theta$ such that $c(\widehat \theta) \le \min_{\theta\in\Theta} c(\theta) + O(\epsilon)$. We will prove
\begin{enumerate}
\item that $\norm{[\mu_0 + \Phi\widehat\theta]_{-}}_1$ and $\norm{(P -
B)^\top (\mu_0 + \Phi\widehat\theta)}_1$ are small and $\mu_0 + \Phi\widehat\theta$ is close to $\mu_{\widehat\theta}$
(Lemma~\ref{lem:V.to.stationary.distribution}), and
\item that $\ell^\top (\mu_0 + \Phi \widehat\theta) \le \min_{\theta\in\Theta} c(\theta) + O(\epsilon)$.
\end{enumerate}
As we will show, these two facts imply that with high probability, for any $\theta\in\Theta$,
\begin{align*}
\mu_{\widehat\theta}^\top \ell \le \mu_\theta^\top \ell &+ \frac{1}{\epsilon}  \norm{[\mu_0 + \Phi\theta]_{-}}_1+ \frac{1}{\epsilon} \norm{(P - B)^\top (\mu_0 + \Phi\theta)}_1 + O(\epsilon).
\end{align*}

Unfortunately, calculating the gradients of $c(\theta)$ is $O(\cX\cA)$. Instead, we construct unbiased estimators and use stochastic subgradient descent.
Let $T$ be the number of iterations of our algorithm, $q_1$ and $q_2$ be distributions over the state-action and state space, respectively (we will later discuss how to choose them), and $((x_t,a_t))_{t=1\dots T}$ and $(x_t')_{t=1\dots T}$ be i.i.d. samples from these distributions. At round $t$, the algorithm estimates subgradient $\nabla c(\theta)$ by
\begin{align}
\label{eq:grad-est}
g_t(\theta) &= \ell^\top \Phi - H \frac{\Phi_{(x_t,a_t),:}}{q_1(x_t,a_t)} \one{\mu_0(x_t,a_t)+\Phi_{(x_t,a_t),:} \theta < 0} + H  \frac{(P-B)_{:,x_t'}^\top \Phi}{q_2(x_t')} s((P-B)_{:,x_t'}^\top \Phi \theta).
\end{align}
This estimate is fed to the projected subgradient method, which in turn generates a vector $\theta_t$. After $T$ rounds, we average vectors $(\theta_t)_{t=1\dots T}$ and obtain the final solution $\widehat \theta_T = \sum_{t=1}^T \theta_t/T$. Vector $\mu_0 + \Phi \widehat \theta_T$ defines a policy, which in turn defines a stationary distribution $\mu_{\widehat \theta_T}$. The algorithm is shown in Figure~\ref{alg:SGD}.

\begin{figure}
\begin{center}
\framebox{\parbox{12cm}{
\begin{algorithmic}
\STATE \textbf{Input: } Constants $S$ and $H$, number of rounds $T$, step size $\eta$.  
\STATE Let $\Pi_{\Theta}$ be the Euclidean projection onto $\Theta$.
\STATE Initialize $\theta_1 = 0$.
\FOR{$t:=1,2,\dots, T$}
\STATE Sample $(x_t,a_t)\sim q_1$ and $x_t'\sim q_2$.
\STATE Compute subgradient estimate $g_t$ \eqref{eq:grad-est}.
\STATE Update $\theta_{t+1} = \Pi_{\Theta} (\theta_t - \eta_t g_t)$.
\ENDFOR
\STATE $\widehat \theta_T = \frac{1}{T}\sum_{t=1}^T \theta_t$.
\STATE Return policy $\pi_{\widehat \theta_T}$.
\end{algorithmic}
}}
\end{center}
\caption{The Stochastic Subgradient Method for Markov Decision Processes}
\label{alg:SGD}
\end{figure}

\subsection{Excess Loss bound}
We now turn towards proving the main result of this section, Theorem~\ref{thm:main_average}, which requires a (standard) assumption that any policy quickly converges to its stationary distribution.
\begin{ass}[Fast Mixing]
\label{ass:uniform-mixing}
For any policy $\pi$, there exists a constant $\tau(\pi)>0$ such that for all distributions $\mu$ and $\mu'$ over the state space, $\norm{\mu^\top P^\pi - \mu'^\top P^\pi}_1 \le e^{-1/\tau(\pi)} \norm{\mu - \mu'}_1$.
\end{ass}
Define
\begin{align*}
C_1 = \max_{(x,a)\in [\cX]\times [\cA]}\frac{\norm{\Phi_{(x,a),:}}}{q_1(x, a)}\,, \qquad C_2 = \max_{x\in [\cX]}\frac{\norm{(P - B)_{:,x}^\top \Phi}}{q_2(x)} \; .
\end{align*}
These constants appear in our excess loss bounds, so we would like to choose distributions $q_1$ and $q_2$ such that $C_1$ and $C_2$ are small. Several common scenarios permit convenient $C_1$ and $C_2$:
\begin{itemize}
\item \textbf{Sparseness of $P$}
  If there is $C'>0$ such that for any $(x,a)$ and $i$, $\Phi_{(x,a), i}\le C'/(\cX \cA)$ and each column of $P$ has only $N$ non-zero elements, then we can simply choose $q_1$ and $q_2$ to be uniform distributions and  
\begin{align*}
\frac{\norm{\Phi_{(x,a),:}}}{q_1(x, a)} \le C'\,, \qquad \frac{\norm{(P - B)_{:,x}^\top \Phi}}{q_2(x)} \le C' (N + \cA) \; .
\end{align*}
\item \textbf{Features as stationary distributions}
  If every feature is the stationary distribution of some policy, then we can choose 
  $q_1(x,a)\propto \min\{\Phi_{(x,a),y}: y \in \mathcal X\}$ and $\norm{(P - B)_{:,x}^\top \Phi}$ vanishes.
\item \textbf{Exponential distributions}
  If $\Phi_{:, i}$ are exponential distributions and feature values at neighboring states are close to each other, then we can choose $q_1$ and $q_2$ to be appropriate exponential distributions so that $\norm{\Phi_{(x,a),:}}/q_1(x, a)$ and $\norm{(P - B)_{:,x}^\top \Phi}/q_2(x)$ are always bounded. 
\item \textbf{One step look-ahead}
  When the columns of $\Phi$ are close to their \textit{one step look-ahead}, there exists a constant $C''>0$ such that for any $x$, $\norm{P_{:,x}^\top \Phi}/\norm{B_{:,x}^\top \Phi} < C''$. If we are also able to compute $Z_1 = \sum_{(x,a)} \norm{\Phi_{(x,a),:}}$ and $Z_2 = \sum_x \norm{B_{:,x}^\top \Phi}$, then it is natural to take $q_1(x,a)= \norm{\Phi_{(x,a),:}}/Z_1$ and $q_2(x)=\norm{B_{:,x}^\top \Phi}/Z_2$.
\end{itemize}

 In what follows, we assume that such distributions $q_1$ and $q_2$ are known.

Minimizing the convex surrogate function does not guarantee a feasible solution to the original dual LP. Therefore, we define the following non-feasibility penalties which roughly correspond to how far $\Phi\theta$ is from the simplex and how far $\Phi\theta$ is from a stationary distribution, respectively:
\begin{align*}
  V_1(\theta) & \df \sum_{(x,a)} \abs{ [\mu_0(x,a) + \Phi_{(x,a),:}\theta]_{-}} \text{ and }\\
  V_2(\theta) & \df \norm{(P-B)^\top(\Phi\theta)}_1
                = \sum_{x'} \abs{(P- B)_{:,x'}^\top \Phi \theta }.
\end{align*}

The rest of the section proves the following theorem, our main guarantee for the stochastic subgradient method.
\begin{theorem}
\label{thm:main_average}
Consider an expanded efficient large-scale dual ALP problem and some error tolerance $\epsilon>0$ and desired maximum probability of error $\delta>0$. Then running the stochastic subgradient method (shown in Figure~\ref{alg:SGD}) with
\[
T\geq \max\left\{\frac{H^2}{\epsilon^2}, 40S^2\log\frac{1}{\delta}\right\}
\quad\text{ and }\quad
  \eta=\left(\sqrt{d} + H (C_1 + C_2)\right)\frac{S}{\sqrt{T}},
\]
yields a $\widehat \theta_T$ where
\begin{align*}
  \ell^\top \mu_{\widehat\theta_T}
    &\leq
      \ell^\top\mu_\theta      
       +2\left(H+ O(1)\right)\left(V_1(\theta) + V_2(\theta)\right)
      + O\left(\frac{1}{H}\right)
      +O\left(\epsilon\right),      
\end{align*}
holds with probability at least $1-\delta$. In particular, for the choice of $H = \epsilon^{-1}$, the bound becomes 
\begin{equation}
  \ell^\top \mu_{\widehat\theta_T}
    \leq
    \ell^\top\mu_\theta 
      +O\left(\frac{1}{\epsilon}\right)\left(V_1(\theta) + V_2(\theta)\right)
      +O\left(\epsilon\right).
      \label{eq:exp-loss}
\end{equation}
Constants hidden in the big-O notation are polynomials in $S$, $d$, $C_1$, $C_2$, $\log(1/\delta)$, $\log(V_1(\theta)+V_2(\theta))$, $\tau(\mu_{\theta})$, and $\tau(\mu_{\widehat\theta_T})$.
\end{theorem}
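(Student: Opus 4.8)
The plan is to couple a high-probability analysis of the stochastic subgradient method run on the convex surrogate $c(\theta)=\ell^\top(\mu_0+\Phi\theta)+H\bigl(V_1(\theta)+V_2(\theta)\bigr)$ with the structural Lemma~\ref{lem:V.to.stationary.distribution}, which converts the violation penalties into a bound on how far the \emph{true} stationary distribution $\mu_\theta$ lies from the surrogate vector $\mu_0+\Phi\theta$. For the optimization step I would first check that $g_t$ in \eqref{eq:grad-est} is an unbiased estimator of a subgradient of $c$: the indicator and the sign function $s(\cdot)$ are exactly the subgradients of $[\cdot]_-$ and $|\cdot|$, and dividing by $q_1,q_2$ cancels the sampling bias. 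I would then bound the second moment $\ex\norm{g_t(\theta)}^2$; the importance weights $1/q_1,1/q_2$ together with the definitions of $C_1,C_2$ give a bound of order $(\sqrt d+H(C_1+C_2))^2$. Since $\Theta$ has radius $O(S)$, the standard projected-subgradient regret bound plus convexity/Jensen for the averaged iterate $\widehat\theta_T$ give an in-expectation excess of order $S(\sqrt d+H(C_1+C_2))/\sqrt T$; applying Azuma--Hoeffding to the martingale $\sum_t\langle\nabla c(\theta_t)-g_t,\theta_t-\theta\rangle$ supplies the $\log(1/\delta)$ factor and explains the $T\ge 40S^2\log(1/\delta)$ requirement. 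Substituting $T\ge H^2/\epsilon^2$ collapses the excess, so with probability at least $1-\delta$,
\[
c(\widehat\theta_T)\le\min_{\theta\in\Theta}c(\theta)+O\!\left(\tfrac1H\right)+O(\epsilon).
\]

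Expanding $c$ on both sides and discarding the nonnegative term $H(V_1(\widehat\theta_T)+V_2(\widehat\theta_T))$ on the right gives, for every $\theta\in\Theta$,
\[
\ell^\top(\mu_0+\Phi\widehat\theta_T)+H\bigl(V_1(\widehat\theta_T)+V_2(\widehat\theta_T)\bigr)\le\ell^\top(\mu_0+\Phi\theta)+H\bigl(V_1(\theta)+V_2(\theta)\bigr)+O\!\left(\tfrac1H\right)+O(\epsilon).
\]
Lemma~\ref{lem:V.to.stationary.distribution}, where Assumption~\ref{ass:uniform-mixing} enters, yields $\norm{\mu_\theta-(\mu_0+\Phi\theta)}_1\le O(1)\,(V_1(\theta)+V_2(\theta))$ with the constant polynomial in the mixing time $\tau$; since $\norm{\ell}_\infty\le1$ the same bounds $\abs{\ell^\top\mu_\theta-\ell^\top(\mu_0+\Phi\theta)}$. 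Applying this at $\widehat\theta_T$ passes from $\ell^\top(\mu_0+\Phi\widehat\theta_T)$ to the true cost $\ell^\top\mu_{\widehat\theta_T}$ at the price of $O(1)(V_1(\widehat\theta_T)+V_2(\widehat\theta_T))$; because $H\ge O(1)$, this extra term is dominated by the $H(V_1(\widehat\theta_T)+V_2(\widehat\theta_T))$ already present on the left and is cancelled, so the violation at $\widehat\theta_T$ vanishes entirely. Applying the lemma once more at $\theta$ to return from $\ell^\top(\mu_0+\Phi\theta)$ to $\ell^\top\mu_\theta$ produces the final $2(H+O(1))(V_1(\theta)+V_2(\theta))$ coefficient, giving the theorem. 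The elegant point here is that the penalty weight $H$ in the surrogate is chosen large enough to absorb the lemma's conversion constant, which is precisely what leaves only the violation \emph{at the comparator} $\theta$ on the right-hand side.

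I expect the proof of Lemma~\ref{lem:V.to.stationary.distribution} to be the main obstacle and the conceptual heart of the argument, the SGD bound being essentially textbook. The policy $\pi_\theta$ is built from the positive, renormalized part of $\mu_0+\Phi\theta$, so $V_1$ controls the truncation/renormalization error, while $V_2=\norm{(P-B)^\top\Phi\theta}_1$ is exactly the one-step non-invariance of the state marginal under $P^{\pi_\theta}$. The delicate part is converting this one-step error into a bound on the distance to the \emph{actual} stationary distribution: iterating $P^{\pi_\theta}$ and summing the geometric series whose ratio is the contraction factor $e^{-1/\tau}$ of Assumption~\ref{ass:uniform-mixing} turns the per-step violation into a factor of order $\tau$, and the positive-part bookkeeping must be carried consistently through this limit. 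This geometric-mixing argument, rather than the routine stochastic optimization analysis, is where I anticipate the real work.
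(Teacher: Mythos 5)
Your proposal follows the paper's proof almost exactly: an unbiased-subgradient/high-probability regret bound for the surrogate $c(\theta)$ (the paper uses a self-normalized martingale inequality where you invoke Azuma--Hoeffding, but this is cosmetic), followed by Lemma~\ref{lem:V.to.stationary.distribution} applied at both $\widehat\theta_T$ and the comparator $\theta$ to convert constraint violations into $\norm{\mu_\theta-(\mu_0+\Phi\theta)}_1$, and a final combination. Your reading of where the mixing assumption enters and why the heart of the matter is the geometric-series argument in Lemma~\ref{lem:V.to.stationary.distribution} is also accurate.

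There is one step that does not hold as you state it. You claim that the conversion cost at $\widehat\theta_T$, namely $O(1)\bigl(V_1(\widehat\theta_T)+V_2(\widehat\theta_T)\bigr)$, is ``dominated by the $H(V_1(\widehat\theta_T)+V_2(\widehat\theta_T))$ already present on the left and is cancelled.'' That absorption requires $H$ to exceed the conversion constant, which by Lemma~\ref{lem:V.to.stationary.distribution} is of order $\tau(\mu_{\widehat\theta_T})\log(1/\epsilon')$ --- a problem-dependent quantity that the theorem does not assume is below $H$ (the statement only allows the hidden constants to be polynomial in $\tau(\mu_{\widehat\theta_T})$, and permits any $H\geq 1$). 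The paper avoids this by a slightly different bookkeeping: from the surrogate optimality inequality \eqref{eq:online-to-batch} it first extracts explicit upper bounds $V_1(\widehat\theta_T),V_2(\widehat\theta_T)\leq \frac{1}{H}\bigl(2(1+S)+H V_1(\theta)+H V_2(\theta)+b_T\bigr)$ (this uses a crude bound of $2(1+S)$ on the difference of the linear objective terms, which your argument skips), and only then feeds these into Lemma~\ref{lem:V.to.stationary.distribution}; the conversion cost at $\widehat\theta_T$ then splits into a piece proportional to $V_1(\theta)+V_2(\theta)$ (absorbed into the $O(1)$ of the $2(H+O(1))$ coefficient) and a piece proportional to $1/H$ (producing the $O(1/H)$ term). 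You should replace the cancellation step with this substitution; everything else in your outline matches the paper.
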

Functions $V_1$ and $V_2$ are bounded by small constants for any set of normalized features: for any $\theta\in\Theta$,
\begin{align*}
V_1(\theta) &\le \norm{\mu_0}_1 + \norm{\Phi \theta}_1 \le 1 + \sum_{(x,a)} \abs{\Phi_{(x,a),:} \theta} \le 1 + S d \,, \\
V_2(\theta) &\le \sum_{x'} \abs{P_{:,x'}^\top ( \mu_0 + \Phi \theta) } + \sum_{x'} \abs{B_{:,x'}^\top ( \mu_0 + \Phi \theta) } \\
&\le \left(\sum_{x'} P_{:,x'}\right)^\top [ \mu_0 + \Phi \theta ]_+ + \left(\sum_{x'} B_{:,x'}\right)^\top [\mu_0 + \Phi \theta]_+ \\
&= 2 [ \mu_0 + \Phi \theta ]_+^\top\mathbf 1 \\
&\le 2 \abs{ \mu_0 + \Phi \theta }^\top \mathbf 1 \\
&= 2 + 2 S \; .
\end{align*}
Thus $V_1$ and $V_2$ can be very small given a carefully designed set of features. The output $\widehat \theta_T$ is a random vector as the algorithm is based on a stochastic convex optimization method. The above theorem shows that with high probability the policy implied by this output is near optimal.

The optimal choice for $\epsilon$ is $\epsilon=\sqrt{V_1(\theta_*) + V_2(\theta_*)}$, where $\theta_*$ is the minimizer of RHS of \eqref{eq:exp-loss} and not known in advance. One could think of parameterizing the optimization problem by $H$, but the problem is not jointly convex in $H$ and $\theta$. Nevertheless, we present methods that recover a $O(\sqrt{V_1(\theta_*) + V_2(\theta_*)})$ error bound using a grid based method in Section~\ref{sec:finding_H}.

\subsection{Analysis}
  This section provides the necessary technical tools and a proof of the main result. We break the proof into two main ingredients. First, we demonstrate that a good approximation to the surrogate loss gives a feature vector that is almost a stationary distribution; this is Lemma~\ref{lem:V.to.stationary.distribution}.
Second, we justify the use of unbiased gradients in Theorem~\ref{thm:stoch-gradient} and Lemma~\ref{lem:risk-bound}. The section concludes with the proof of Theorem~\ref{thm:main_average}. Long, technical proofs have been moved to Section~\ref{sec:average.cost.proofs} when we felt that their inclusion did not add much insight. 

  The first ingredient shows that we can relate the magnitude of the constraint violation of $\theta$ to the difference between $\Phi\theta$ and $\mu_\theta$, which quantifies how far $\Phi\theta$ is from a stationary distribution.
\begin{lemma}
\label{lem:V.to.stationary.distribution}
Let $u\in \Reals^{\cX \cA}$ be a vector, $\mathcal N$ be the set of points $(x,a)$ where $u (x,a)<0$, and $\mathcal S$ be the complement of $\mathcal N$.
Assume
\[
\sum_{x,a} u (x,a) = 1,\, \sum_{(x,a)\in \mathcal N } \abs{u(x,a)} \le \epsilon',\, \norm{u^\top (P-B)}_1\le \epsilon'' .
\]
The vector $[u]_+/\norm{[u]_+}_1$ defines a policy, which in turn defines a stationary distribution $ \mu_{u}$. We have that
\[
\norm{ \mu_{u} - u}_1 \le \tau(\mu_u) \log(1/\epsilon') (2\epsilon'+\epsilon'') + 3\epsilon' \; .
\]
\end{lemma}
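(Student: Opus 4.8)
The plan is to decompose $\|\mu_u - u\|_1$ by the triangle inequality through two intermediate vectors, the positive part $w \df [u]_+$ and its normalization $\hat w \df w/\|w\|_1$, and to reduce the nontrivial piece to a statement about state marginals. First I would record the elementary estimates. Since $\sum_{x,a} u(x,a)=1$ and $\sum_{(x,a)\in\mathcal N}\abs{u(x,a)}\le\epsilon'$, the positive part satisfies $\|w\|_1 = 1 + \sum_{(x,a)\in\mathcal N}\abs{u(x,a)} \in [1, 1+\epsilon']$. This immediately gives $\|w-u\|_1 = \|[u]_-\|_1 \le \epsilon'$ and $\|\hat w - w\|_1 = \|w\|_1 - 1 \le \epsilon'$, so that
\[
\|\mu_u - u\|_1 \le \|\mu_u - \hat w\|_1 + 2\epsilon'.
\]

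The key reduction is to rewrite $\|\mu_u - \hat w\|_1$ in terms of state marginals. By construction the policy $\pi_u$ derived from $[u]_+$ satisfies $\hat w(x,a) = \nu(x)\,\pi_u(a\mid x)$, where $\nu \df B^\top\hat w$ is the state marginal of $\hat w$; likewise the stationary state-action distribution factors as $\mu_u(x,a) = m(x)\,\pi_u(a\mid x)$, where $m \df B^\top\mu_u$ is stationary for $P^{\pi_u}$. Since both share the same conditional $\pi_u(\cdot\mid x)$, summing over actions yields the exact identity $\|\mu_u - \hat w\|_1 = \|m - \nu\|_1$, so it suffices to bound the distance from $\nu$ to the true stationary state distribution $m$.

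Next I would show that $\nu$ is nearly stationary under $P^{\pi_u}$. From the definition of $P^{\pi_u}$ one checks $(P^{\pi_u})^\top\nu = P^\top\hat w$ and $\nu = B^\top\hat w$, hence $(P^{\pi_u})^\top\nu - \nu = (P-B)^\top\hat w$. Writing $[u]_+ = u - [u]_-$, using $\|w\|_1\ge1$, and invoking the crude operator bound $\|(P-B)^\top v\|_1 \le 2\|v\|_1$ (each row of $P$ and of $B$ sums to one), I obtain
\[
\|(P^{\pi_u})^\top\nu - \nu\|_1 \le \|(P-B)^\top u\|_1 + \|(P-B)^\top[u]_-\|_1 \le \epsilon'' + 2\epsilon'.
\]
To convert this one-step drift into a bound on $\|m-\nu\|_1$ I would invoke the fast-mixing Assumption~\ref{ass:uniform-mixing}. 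Writing $\nu_k \df ((P^{\pi_u})^\top)^k\nu$ and inserting $\nu_K$ for a truncation horizon $K$, I split $\|m-\nu\|_1 \le \|\nu-\nu_K\|_1 + \|m-\nu_K\|_1$. The accumulated drift telescopes, and since each step is a contraction, $\|\nu_{k+1}-\nu_k\|_1 \le \|\nu_1-\nu_0\|_1 \le \epsilon''+2\epsilon'$, giving $\|\nu-\nu_K\|_1 \le K(\epsilon''+2\epsilon')$; meanwhile the contraction applied to $m-\nu$ gives $\|m-\nu_K\|_1 \le 2e^{-K/\tau}$ with $\tau\df\tau(\mu_u)$. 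Choosing $K = \tau\log(1/\epsilon')$ balances the two terms and yields $\|m-\nu\|_1 \le \tau\log(1/\epsilon')(2\epsilon'+\epsilon'') + O(\epsilon')$, which combined with the first display gives the claimed bound.

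I expect the last step to be the main obstacle: everything preceding it is exact algebra, but turning ``almost stationary in one step'' into ``close to the true stationary distribution'' is where the mixing assumption and the choice of horizon $K$ are essential. The delicate point is that the accumulated per-step drift grows linearly in $K$ while the residual mixing error decays geometrically like $e^{-K/\tau}$, so $K$ must be taken just large enough that the residual is $O(\epsilon')$ without letting the linear term explode; this trade-off is precisely what produces the $\tau(\mu_u)\log(1/\epsilon')$ factor in the statement.
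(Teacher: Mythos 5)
Your proposal is correct and follows essentially the same route as the paper's proof: both pass through the normalized positive part $h=[u]_+/\norm{[u]_+}_1$ (costing $2\epsilon'$), show its one-step drift under $P^{\pi_u}$ is at most $2\epsilon'+\epsilon''$, and then iterate the kernel for $K=\tau\log(1/\epsilon')$ steps, trading the linearly accumulating drift against the geometric mixing residual. Your reformulation via state marginals is only a cosmetic (and arguably cleaner) repackaging given that Assumption~\ref{ass:uniform-mixing} is stated for state distributions, and the only discrepancy is an immaterial constant: bounding the initial distance by $2$ rather than $1$ gives $4\epsilon'$ in place of the paper's $3\epsilon'$.
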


The second ingredient is the validity of the subgradient estimates. We assume access to estimates of the subgradient of a convex cost function. Error bounds can be obtained from results in the stochastic convex optimization literature; the following theorem, a high-probability version of Lemma~3.1 of \citet{Flaxman-Kalai-McMahan-2005} for stochastic convex optimization, is sufficient. We note that the variance reduced stochastic gradient descent literature (e.g.\ SAGA or SVGR) cannot be directly applied since a full gradient calculation is impossible, and most complexity upper bounds are at least $O(\sqrt{\cX\cA}/\epsilon)$ \citep{xiao2014proximal}, which is inappropriate for our setting.

\begin{theorem}
  \label{thm:stoch-gradient}

  Consider a bounded set $\cZ\subset\Reals^d$ of radius $Z$ (i.e.\  $\norm{z}\le Z$ for all $z\in\cZ)$ and a sequence of real-valued convex cost functions $(f_t)_{t=1,2,\dots,T}$. Let $z_1,z_2,\dots,z_T\in \cZ$ be the stochastic gradient decent path defined by defined by $z_1=0$  and $z_{t+1} = \Pi_{\cZ} (z_t - \eta f_t')$, where $\Pi_{\cZ}$ is the Euclidean projection onto $\cZ$, $\eta>0$ is a learning rate, and $f_1',\dots,f_T'$ are bounded unbiased subgradient estimates; that is,  $\ex\left[f_t' | z_t\right] = \nabla f(z_t)$ and $\norm{f_t'}\le F$ for some $F>0$. Then, for $\eta = Z/(F\sqrt{T})$ and any $\delta\in (0,1)$,
\begin{align}
\label{eq:stoch-gradient-excess-loss}
\sum_{t=1}^T &f_t(z_t) - \min_{z\in\cZ} \sum_{t=1}^T f_t(z) \le Z F \sqrt{T} + \sqrt{(1 + 4 Z^2 T ) \left(2 \log\frac{1}{\delta} + d \log \left( 1 + \frac{Z^2 T}{d} \right) \right) } \;
\end{align}
 with probability at least $1-\delta$.
\end{theorem}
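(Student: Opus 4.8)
The plan is to follow the classical projected-subgradient (online gradient descent) template on the realized gradient estimates, and then upgrade the resulting in-expectation regret guarantee to a high-probability one by controlling a martingale remainder with a self-normalized concentration inequality. For the first part, non-expansiveness of the Euclidean projection $\Pi_\cZ$ gives, for every fixed $z\in\cZ$, the one-step inequality
\[
\norm{z_{t+1}-z}^2 \le \norm{z_t-z}^2 - 2\eta\langle f_t', z_t - z\rangle + \eta^2\norm{f_t'}^2 .
\]
Rearranging for $\langle f_t', z_t-z\rangle$, summing over $t=1,\dots,T$, telescoping, and using $z_1=0$ (so $\norm{z_1-z}\le Z$) together with $\norm{f_t'}\le F$ yields $\sum_{t=1}^T \langle f_t', z_t-z\rangle \le \frac{Z^2}{2\eta}+\frac{\eta F^2 T}{2}$, which is exactly $ZF\sqrt T$ for the stated step size $\eta = Z/(F\sqrt T)$; this is the first term of the bound.

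Next I would pass from linearized regret to function-value regret by convexity. Writing $g_t := \ex[f_t'\mid z_t]$, which by assumption is a subgradient of $f_t$ at $z_t$, convexity gives $f_t(z_t)-f_t(z)\le\langle g_t, z_t - z\rangle$. Splitting $g_t = f_t' + (g_t - f_t')$ and summing,
\[
\sum_{t=1}^T\big(f_t(z_t)-f_t(z)\big) \le ZF\sqrt T + \sum_{t=1}^T \langle g_t - f_t', z_t - z\rangle .
\]
Because $z_t$ is determined by the history $\mathcal F_{t-1}$ and $\ex[g_t - f_t'\mid\mathcal F_{t-1}]=0$, the last sum is a martingale, and everything now reduces to a high-probability upper bound on it.

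For the martingale term I would use a self-normalized, method-of-mixtures argument rather than a crude Azuma bound, both because the eventual comparator $z=\argmin_{z\in\cZ}\sum_t f_t(z)$ can depend on the realized functions and because we want the sharp dimension-dependent logarithmic factor. Setting $\zeta_t := f_t' - g_t$, the term equals $-\sum_t\langle\zeta_t, z_t\rangle + \big\langle \sum_t \zeta_t,\, z\big\rangle$, which is linear in $z$; mixing against a Gaussian prior on the $d$-dimensional comparator direction and pseudo-maximizing converts the supremum over $z\in\cZ$ into a log-determinant penalty. With self-normalization by $I+\sum_t (z_t-z)(z_t-z)^\top$, whose eigenvalues are controlled through $\norm{z_t-z}\le 2Z$ (hence $\sum_t\norm{z_t-z}^2\le 4Z^2 T$), this produces a term of the stated form $\sqrt{(1+4Z^2T)\big(2\log\frac1\delta + d\log(1+Z^2T/d)\big)}$, the factor $d\log(1+Z^2T/d)$ being precisely the log-determinant of the self-normalizing matrix. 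Adding the three contributions gives the claimed inequality.

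The \emph{crux} is this last step. A direct Azuma--Hoeffding bound on the scalar martingale would both require an explicit union bound or covering over the data-dependent comparator and fail to recover the tight $d\log(1+Z^2T/d)$ factor; it is the self-normalized machinery that simultaneously delivers uniformity over $z$ and the correct log-determinant dependence. The technical care lies in verifying the conditionally sub-Gaussian (or bounded-increment) hypotheses needed to build the mixture supermartingale, with the variance proxy arising from the bound $\norm{z_t-z}^2\le 4Z^2$, and in checking that the Gaussian prior's regularization matches the leading $1+4Z^2T$ normalization.
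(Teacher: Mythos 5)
Your proposal is correct and follows essentially the same route as the paper: the paper invokes Zinkevich's online gradient descent regret bound on the perturbed losses $h_t(z)=f_t(z)+z^\top(f_t'-\nabla f_t(z_t))$, which is exactly your projection-nonexpansiveness/telescoping argument yielding the $ZF\sqrt{T}$ term, and then bounds the remaining martingale $\sum_t \langle z_*-z_t,\,f_t'-\nabla f_t(z_t)\rangle$ via the self-normalized method-of-mixtures inequalities of Abbasi-Yadkori, exactly as you propose. The only cosmetic difference is that you unroll the OGD analysis by hand and are slightly more careful about uniformity over the comparator, whereas the paper cites the two results directly.
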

\begin{proof}
Let $z_* = \argmin_{z\in\cZ} \sum_{t=1}^T f_t(z)$ and $\eta_t = f_t' - \nabla f_t (z_t)$. Define function $h_t:\cZ\rightarrow\Reals$ by $h_t(z) = f_t(z) + z \eta_t$. Notice that $\nabla h_t(z_t) = \nabla f_t(z_t) + \eta_t = f_t'$. By Theorem~1 of \citet{Zinkevich-2003}, we get that
\[
\sum_{t=1}^T h_t(z_t) - \sum_{t=1}^T h_t(z_*) \le \sum_{t=1}^T h_t(z_t) - \min_{z\in\cZ}\sum_{t=1}^T h_t(z) \le Z F \sqrt{T} \; .
\]
Thus,
\[
\sum_{t=1}^T f_t(z_t) - \sum_{t=1}^T f_t(z_*) \le Z F \sqrt{T} + \sum_{t=1}^T (z_* - z_t) \eta_t \; .
\]
Let $S_t = \sum_{s=1}^{t-1} (z_* - z_s) \eta_s$, which is a self-normalized sum~\citep{delaPena-Lai-Shao-2009}. By Corollary~3.8 and Lemma~E.3 of \citet{Abbasi-Yadkori-2012}, we get that for any $\delta\in (0,1)$, with probability at least $1-\delta$,
\begin{align*}
\abs{S_t} &\le \sqrt{\left(1 + \sum_{s=1}^{t-1} (z_t - z_*)^2 \right) \left(2 \log\frac{1}{\delta} + d \log \left( 1 + \frac{Z^2 t}{d} \right) \right) }  \\
&\le \sqrt{(1 + 4 Z^2 t ) \left(2 \log\frac{1}{\delta} + d \log \left( 1 + \frac{Z^2 t}{d} \right) \right) } \; .
\end{align*}
Thus,
\[
\sum_{t=1}^T f_t(z_t) - \min_{z\in\cZ} \sum_{t=1}^T f_t(z) \le Z F \sqrt{T} + \sqrt{(1 + 4 Z^2 T ) \left(2 \log\frac{1}{\delta} + d \log \left( 1 + \frac{Z^2 T}{d} \right) \right) } \; .
\]
\end{proof}
\begin{remark}
\label{rem:Jensen}
Let $B_T$ denote the RHS of \eqref{eq:stoch-gradient-excess-loss}.
If all cost functions are equal to $f$, then by convexity of $f$ and an application of Jensen's inequality, we obtain that $f(\sum_{t=1}^T z_t/T) - \min_{z\in \cZ} f(z) \le B_T/T$.
\end{remark}

The last step before giving the proof of Theorem~\ref{thm:main_average} is to apply Theorem~\ref{thm:stoch-gradient} to our convex surrogate function, $c(\theta)$.
\begin{lemma}
\label{lem:risk-bound}
Under the same conditions as in Theorem~\ref{thm:main_average} and any $\delta\in (0,1)$
\begin{align}
\label{eq:err-bnd}
  c(\widehat \theta_T) &-  \min_{\theta\in\Theta} c(\theta)
                         \le
                         \frac{S(\sqrt{d} + H (C_1 + C_2))}{\sqrt{T}}
                         + \sqrt{\frac{1 + 4 S^2 T}{T^2} \left(2 \log\frac{1}{\delta}
                         + d \log \left( 1 + \frac{S^2 T}{d} \right) \right) } \;
\end{align}
with probability at least $1-\delta$,
\end{lemma}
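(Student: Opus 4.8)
The plan is to recognize that the algorithm in Figure~\ref{alg:SGD} is exactly projected stochastic subgradient descent run on the \emph{single} fixed convex function $c$ of \eqref{eq:objective_function}, so that Lemma~\ref{lem:risk-bound} follows from Theorem~\ref{thm:stoch-gradient} once I verify its two hypotheses on the gradient estimates---unbiasedness and a uniform norm bound---and identify the radius $Z$ of the constraint set with $S$.

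First I would confirm that $g_t$ in \eqref{eq:grad-est} is an unbiased estimate of a subgradient of $c$ at $\theta_t$. Differentiating the three summands of \eqref{eq:objective_function}, the objective contributes $\ell^\top\Phi$; the term $H\norm{[\mu_0+\Phi\theta]_{-}}_1$ contributes $-H\sum_{(x,a)}\Phi_{(x,a),:}\one{\mu_0(x,a)+\Phi_{(x,a),:}\theta<0}$; and the term $H\norm{(P-B)^\top\Phi\theta}_1$ contributes $H\sum_{x'}(P-B)_{:,x'}^\top\Phi\, s((P-B)_{:,x'}^\top\Phi\theta)$. Taking the expectation of \eqref{eq:grad-est} over the independent draws $(x_t,a_t)\sim q_1$ and $x_t'\sim q_2$, the importance weights $1/q_1(x_t,a_t)$ and $1/q_2(x_t')$ cancel the sampling probabilities, so the two single-sample terms average to exactly these two sums; hence $\ex[g_t(\theta_t)\mid\theta_t]=\nabla c(\theta_t)$.

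Next I would establish the uniform bound $\norm{g_t(\theta)}\le F:=\sqrt d+H(C_1+C_2)$. By the triangle inequality, $\norm{g_t}$ is at most $\norm{\ell^\top\Phi}+H\norm{\Phi_{(x_t,a_t),:}}/q_1(x_t,a_t)+H\norm{(P-B)_{:,x_t'}^\top\Phi}/q_2(x_t')$. The last two terms are bounded by $HC_1$ and $HC_2$ directly from the definitions of $C_1$ and $C_2$. For the first, since $\ell\in[0,1]^{\cX\cA}$ and the feature columns are normalized, each coordinate obeys $\abs{\ell^\top\Phi_{:,i}}\le\norm{\Phi_{:,i}}_1\le 1$, whence $\norm{\ell^\top\Phi}\le\sqrt d$; this is the source of the $\sqrt d$ term.

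With these facts in hand, I would invoke Theorem~\ref{thm:stoch-gradient} with $Z=S$ and $F=\sqrt d+H(C_1+C_2)$ and the step size it prescribes for these values. Because every per-round cost in the descent is the same function $c$, Remark~\ref{rem:Jensen} (Jensen applied to the average iterate $\widehat\theta_T$) converts the path regret bound into $c(\widehat\theta_T)-\min_{\theta\in\Theta}c(\theta)\le B_T/T$, where $B_T$ is the right-hand side of \eqref{eq:stoch-gradient-excess-loss}; substituting $Z=S$ and the value of $F$ into $B_T/T$ yields \eqref{eq:err-bnd} term by term. I expect the only genuine subtlety to be the unbiasedness computation and the norm bound---especially the normalization argument controlling $\norm{\ell^\top\Phi}$ by $\sqrt d$---since all the high-probability martingale machinery is already packaged inside Theorem~\ref{thm:stoch-gradient}.
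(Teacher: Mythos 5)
Your proposal is correct and follows essentially the same route as the paper's own proof: verify that $g_t$ is an unbiased bounded estimate of $\nabla c(\theta_t)$ (with the importance weights cancelling the sampling probabilities and with $\norm{\ell^\top\Phi}\le\sqrt d$ coming from $\abs{\ell^\top\Phi_{:,i}}\le\norm{\ell}_\infty\norm{\Phi_{:,i}}_1\le1$), then apply Theorem~\ref{thm:stoch-gradient} with $Z=S$ and $F=\sqrt d+H(C_1+C_2)$ together with Remark~\ref{rem:Jensen}. No gaps.
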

The proof (in the appendix) consists of checking that conditions of Theorem~\ref{thm:stoch-gradient} are satisfied

With both ingredients in place, we can prove our main result.
\begin{proof}[Proof of Theorem~\ref{thm:main_average}]
Let $b_T$ be the RHS of \eqref{eq:err-bnd}. Using the trivial fact that $\sqrt{a+b}\leq 2\sqrt{a}+2\sqrt{b}$,
we can easily derive
\begin{align}
  b_T
  \le
  \frac{S}{\sqrt{T}}
  \left(
  (\sqrt{d} + H (C_1 + C_2))
  + 2\sqrt{10 \log\frac{1}{\delta}}
  +2\sqrt{5d \log \left( 1 + \frac{S^2 T}{d} \right)} \right)
  + O\left(\frac{1}{T}\right).
\end{align}

Lemma~\ref{lem:risk-bound} implies that with high probability for any $\theta\in\Theta$,
\begin{align}
\label{eq:online-to-batch}
\ell^\top (\mu_0 + \Phi \widehat \theta_T) + H\, V_1(\widehat\theta_T) + H\, V_2(\widehat\theta_T) \le \ell^\top (\mu_0 + \Phi\theta)+ H\, V_1(\theta)  + H\, V_2(\theta) + b_T\; .
\end{align}
From \eqref{eq:online-to-batch}, we get that
\begin{align}
\label{eq:V1}
V_1(\widehat \theta_T) &\le \frac{1}{H} \left( 2(1+S) + H\, V_1(\theta)  + H\, V_2(\theta)+ b_T \right) \df \epsilon' \,, \\
\label{eq:V2}
V_2(\widehat \theta_T) &\le \frac{1}{H} \left( 2(1+S) + H\, V_1(\theta)  + H\, V_2(\theta) + b_T \right) \df \epsilon'' \; .
\end{align}
Inequalities \eqref{eq:V1} and \eqref{eq:V2} and Lemma~\ref{lem:V.to.stationary.distribution} give the following bound:
\begin{equation}
\abs{ \ell^\top\mu_{\widehat\theta_T} - \ell^\top(\mu_0 + \Phi \widehat\theta_T)} \le
\tau(\mu_{\widehat\theta_T}) \log(1/\epsilon') (2\epsilon'+\epsilon'') + 3\epsilon' \; ,
\end{equation}
and we can similarly bound
\begin{equation}
\abs{ \ell^\top\mu_{\theta} - \ell^\top(\mu_0 + \Phi \theta)} \le
\tau(\mu_{\theta}) \log(1/V_1(\theta)) (2V_1(\theta) + V_2(\theta)) + 3V_1(\theta).
\end{equation}

Combining these two equations with \eqref{eq:online-to-batch} gives the final result:
\begin{align*}
  \ell^\top\mu_{\widehat\theta_T}
  &\le
    \ell^\top(\mu_0 + \Phi\widehat\theta_T) + \tau(\mu_{\widehat\theta_T}) \log(1/\epsilon') (2\epsilon'+\epsilon'') + 3\epsilon'\\
  &\leq
    \ell^\top(\mu_0 + \Phi\theta_T) + \tau(\mu_{\widehat\theta_T}) \log(1/\epsilon') (2\epsilon'+\epsilon'')
    + 3\epsilon' + H V_1(\theta) + H V_2(\theta) + b_T\\
  &\leq
    \ell^\top\mu_\theta +
\tau(\mu_{\theta}) \log(1/V_1(\theta)) (2V_1(\theta) + V_2(\theta)) + 3V_1(\theta)\\
  & \quad + \tau(\mu_{\widehat\theta_T}) \log(1/\epsilon') (2\epsilon'+\epsilon'')
    + 3\epsilon' + H V_1(\theta) + H V_2(\theta) + b_T\\
&\leq
    \ell^\top\mu_\theta +
2\left(V_1(\theta) + V_2(\theta)\right)
\left(3 + \tau(\mu_{\theta}) \log(1/V_1(\theta))
+ \tau(\mu_{\widehat\theta_T}) \log(1/\epsilon')
+H
\right)\\
  & \quad + \left(2\tau(\mu_{\widehat\theta_T}) \log(1/\epsilon')+3\right)\frac{2(1+S)}{H}
+ (2\tau(\mu_{\widehat\theta_T}) \log(1/\epsilon')+3)\frac{b_T}{H}+b_T.
\end{align*}
Using the form of $b_T$ above, we find the excess loss bound
\begin{align}
  \ell^\top\mu_{\widehat\theta_T}
  &\leq
    \ell^\top\mu_\theta +
 2\left(V_1(\theta) + V_2(\theta)\right)
 \left(3 + \tau(\mu_{\theta}) \log(1/V_1(\theta))
+ \tau(\mu_{\widehat\theta_T}) \log(1/\epsilon')
+H \right)\notag\\
 &\quad  + \left(2\tau(\mu_{\widehat\theta_T}) \log(1/\epsilon')+3\right)\frac{2(1+S)}{H}
+ \frac{S}{\sqrt{T}} H (C_1 + C_2)\notag\\
&\quad+ \left(\frac{2\tau(\mu_{\widehat\theta_T}) \log(1/\epsilon')+3}{H}+2\right)
 \frac{S}{\sqrt{T}}
   \sqrt{10 \log\frac{1}{\delta}}\notag\\
   &\quad
  +O\left(\frac{\log(T)}{\sqrt{T}}\right)+ O\left(\frac{1}{\sqrt{T}H}\right)\label{eqn:average.regret.constants}\\
  &\leq
    \ell^\top\mu_\theta +
 2\left(V_1(\theta) + V_2(\theta)\right)\left(H+ O(1)\right)
+O\left(\frac{1}{H}\right)+O\left(\frac{H}{\sqrt{T}}\right)\notag\\
&\quad+
O\left( \frac{1}{H\sqrt{T}}\right)   \sqrt{ \log\frac{1}{\delta}}
  +O\left(\frac{\log(T)}{\sqrt{T}}\right)
\end{align}
Now, recall that we set
\begin{equation*}
 T = \max\left\{\frac{H^2}{\epsilon^2}, 40S^2\log\frac{1}{\delta}\right\},
\end{equation*}
which finally yields that with high probability, for any $\theta\in\Theta$,
\begin{align*}
  \ell^\top \mu_{\widehat\theta_T}
    &\leq
      \ell^\top\mu_\theta      
       +2\left(H+ O(1)\right)\left(V_1(\theta) + V_2(\theta)\right)
      + O\left(\frac{1}{H}\right)
      +O\left(\epsilon\right),      
\end{align*}
as claimed.
\end{proof}

\subsection{Comparison with Previous results}
With a precise statement of our main result, we return to compare Theorem~\ref{thm:main_average} from \citet{DeFarias-VanRoy-2006}. Their approach is to relate the original MDP to a perturbed version \footnote{In a perturbed MDP, the state process restarts with a certain probability to a \textit{restart distribution}. Such perturbed MDPs are closely related to discounted MDPs.} and then analyze the corresponding ALP.  Let $\Psi$ be a feature matrix that is used to estimate value functions. Recall that $\lambda_*$ is the average loss of the optimal policy and $\lambda_{w}$ is the average loss of the greedy policy with respect to value function $\Psi w$. Let $h_\gamma^*$ be the differential value function when the restart probability in the perturbed MDP is $1-\gamma$. For vector $v$ and positive vector $u$, define the weighted maximum norm $\norm{v}_{\infty, u} = \max_x u(x) \abs{v(x)}$. \citet{DeFarias-VanRoy-2006} prove that for appropriate constants $C,C'>0$ and weight vector $u$,
\begin{equation}
\label{eq:deFaVaRo}
\lambda_{w_*} - \lambda_* \le \frac{C}{1-\gamma} \min_{w} \norm{h_\gamma^* - \Psi w}_{\infty, u} + C' (1-\gamma) \; .
\end{equation}
This bound has similarities to bound \eqref{eq:exp-loss}: tightness of both bounds depends on the quality of feature vectors in representing the relevant quantities (stationary distributions in \eqref{eq:exp-loss} and value functions in \eqref{eq:deFaVaRo}). Once again, we emphasize that the algorithm proposed by \citet{DeFarias-VanRoy-2006} is computationally expensive and requires access to a distribution that depends on optimal policy.

\section{Average Cost Meta-Algorithm}
\label{sec:finding_H}
The previous section proved that Algorithm~\ref{alg:SGD} found a $\widehat\theta_T$ with
\begin{equation*}
    \mu_{\widehat\theta_T}^\top \ell \le \min_{\theta\in\Theta} \left(
      \ell^\top\mu_\theta      
       +2\left(H+ O(1)\right)\left(V_1(\theta) + V_2(\theta)\right)
      + O\left(\frac{1}{H}\right)
      +O\left(\epsilon\right) \right),
\end{equation*}
where $H$ is a hyperparameter and $\epsilon$ is some error tolerance. If one has reason to believe that the violation terms $V_1(\theta) + V_2(\theta)$ are negligible (for example, if the features are close to stationary distributions), then one can set $H = \epsilon^{-1}$. However, we wish to be adaptive to the size of the constrain violations around the optimum $\theta^*$, and ideally obtain the excess loss bound 
\begin{equation*}
\ell^\top\mu_{\widehat\theta_T} \le \min_{\theta\in\Theta}  \ell^\top\mu_\theta +O\left( \sqrt{V_1(\theta) + V_2(\theta)}\right) + O(\epsilon),
\end{equation*}
which would imply that we have solved the Expanded Efficient Large-Scale Dual ALP problem (Definition~\ref{defn:E.ELALP}) with violation $V(\theta) = \sqrt{V_1(\theta) + V_2(\theta)}$.

Unfortunately, we must jointly optimize over $\theta$ and $H$ and the objective is not jointly convex. We avoid this difficulty with a \emph{meta-algorithm}, proposed and analyzed in this section.

This meta-algorithm, detailed in Figure~\ref{alg:meta}, uses Algorithm~\ref{alg:SGD} to approximate $\widehat \theta$ over a grid $H_1,\ldots,H_K$ of $H$ values. It takes as inputs a bound on the violation function $V_{\max}$, a desired error tolerance $\epsilon$, and desired probability tolerance $\delta$. The algorithm then carefully chooses a grid $H_1,\ldots, H_K$, and, for each $i=1,\ldots,K$, computes $\hat\theta_i$, the output of Algorithm~\ref{alg:SGD} with parameter $H= H_i$, and $\widehat V_i$, an approximation to $V_1(\hat\theta_i) + V_2(\hat\theta_i)$. It then returns $\hat\theta_{\hat k}$, where
\begin{equation*}
    \hat k \df \argmin_k \ell^\top\Phi\widehat\theta_k + H_k \widehat V_k + \frac{\beta}{H_k}.
\end{equation*}

Intuitively, this two-step procedure approximately computes
\begin{equation}
  \min_{\theta\in\Theta,H\in\Reals} \left( \ell^\top\mu_\theta + H ( V_1(\theta) + V_2(\theta) ) + \frac{\beta}{H} \right),
  \label{eqn:grid_objective}
\end{equation}
which produces a bound that satisfies Definition~\ref{defn:E.ELALP}.

Throughout this section, we use the following notation. We define $c(H,\theta) \df \ell^\top\Phi\theta + H(V_1(\theta)+V_2(\theta))$,  $\theta_H^* \df \argmin_\theta c(H,\theta)$, and $F(H) = c(H,\theta^*_H) + \frac{\beta}{H}$. Hence, the optimization \eqref{eqn:grid_objective} is equal to 
$\min_{H, \theta} c(H,\theta) + \frac{\beta}{H} = \min_{H} F(H)$.

\begin{figure}
\begin{center}
\framebox{\parbox{14cm}{
\begin{algorithmic}
\STATE \textbf{Input: }
Upper bound $V_{\max}$ on $V_1(\theta) + V_2(\theta)$, error tolerance $\epsilon>0$, error probability $\delta>0$, constraint estimation distributions $q_1$ and $q_2$
\STATE Initialize $H_0 \leftarrow \beta\left(\sqrt{V_{\max}}\right)^{-1}$ and $i \leftarrow 0$
\WHILE{ $H_i \leq \frac{2\beta}{\epsilon}$}
\STATE Set
$H_{i+1} \leftarrow H_i + \epsilon \left( V_{\max} +\frac{\beta }{H_i^2} \right)^{-1}$
\STATE Set $i \leftarrow i+1$
\ENDWHILE
\STATE Set $K \leftarrow i$
\FOR{$k=0,1,\ldots,K$}
\STATE Obtain $\widehat\theta_k$ from Algorithm \ref{alg:SGD} with $T =
\max\left\{\frac{H_k^2}{\epsilon^2}, 40S^2\log\frac{K}{\delta}\right\}$
\STATE Set $n \leftarrow \frac{8(S(C_1+1)+SC_2)^2} {\epsilon^2}\log\left(\frac{4K}{\delta}\right)$
\STATE Sample $y_1,\ldots, y_n\sim q_1$ and $(x_1,a_1),\ldots,(x_n,a_n)\sim q_2$
\STATE Set $ \widehat V_k \leftarrow \frac{1}{n}\displaystyle\sum_{i=1}^n  \left[\frac{[\mu_0(x_i,a_i)+\Phi_{(x_i,a_i),:}\hat\theta_k]_-}{q_1(x,a)}
  +\frac{\abs{(P-B)_{:,y_i}^\top \Phi\hat\theta_k}}{q_2(y_i)} \right]$
\ENDFOR
 \STATE Set $\hat k \leftarrow \argmin_k \ell^\top\Phi\hat\theta_k + H_k \widehat V_k + \frac{\beta}{H_k}$
\STATE Return policy $\pi_{\widehat \theta_{\hat k}}$
\end{algorithmic}
}}
\end{center}
\caption{The Meta-algorithm}
\label{alg:meta}
\end{figure}

\subsection{Estimating the Error Functions}
\label{sec:estimate.V.average}
To run the Grid Algorithm, we need to be able to estimate the constraint violations $V_1(\theta)+V_2(\theta)$.
Similar to the gradient estimate, we estimate $V_1+V_2$ by importance-weighted sampling.
For some $n$ and samples $y_1,\ldots, y_n\sim q_1$ and $(x_1,a_1),\ldots,(x_n,a_n)\sim q_2$, define
\begin{equation}
  \widehat V_n(\theta) \df \frac{1}{n}\sum_{i=1}^n  \frac{[\mu_0(x_i,a_i)+\Phi_{(x_i,a_i),:}\theta]_-}{q_1(x_i,a_i)}
  +\frac{\abs{(P-B)_{:,y_i}^\top \Phi\theta}}{q_2(y_i)} .
\end{equation}
Since $  V_1(\theta) =\sum_{(x,a)} \abs{ [\mu_0(x,a) + \Phi_{(x,a),:}\theta]_{-}}$ and
  $V_2(\theta) = \sum_{x'} \abs{(P- B)_{:,x'}^\top \Phi \theta }$, this estimate is clearly unbiased. Also, we earlier assumed the existence of constants
  $C_1 = \max_{(x,a)\in [\cX]\times [\cA]}\frac{\norm{\Phi_{(x,a),:}}}{q_1(x, a)}$
and $C_2 = \max_{x\in [\cX]}\frac{\norm{(P - B)_{:,x}^\top \Phi}}{q_2(x)}$, and so we can bound
\begin{align*}
\frac{[\mu_0(x_i,a_i)+\Phi_{(x_i,a_i),:}\theta]_-}{q_1(x,a)}
  +\frac{\abs{(P-B)_{:,y_i}^\top \Phi\theta}}{q_2(y_i)}
  \leq
  S(C_1+1) + SC_2
\end{align*}
which gives us concentration of $\widehat V$ around $V$. In particular, applying Hoeffding's inequality yields:
\begin{lemma}
  \label{lem:V_hat_error}
  Given $\epsilon>0$ and $\delta \in [0,1]$, for any $\theta$, the violation function estimate $\widehat V_n(\theta)$ has
  \begin{equation*}
    \abs{\widehat V_n(\theta) - (V_1(\theta)+V_2(\theta))} \leq \epsilon
  \end{equation*}
    with probability at least $1-\delta$ as long as we choose
  $
    n \geq  \frac{(S(C_1+1) + SC_2)^2}{2\epsilon^2}\log\left(\frac{2}{\delta}\right).
  $
\end{lemma}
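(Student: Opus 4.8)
The plan is to recognize $\widehat V_n(\theta)$ as an empirical mean of $n$ i.i.d.\ bounded random variables and apply Hoeffding's inequality; both ingredients it needs—unbiasedness and a boundedness estimate—are already in hand from the discussion immediately preceding the lemma.

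Concretely, I would write $\widehat V_n(\theta) = \frac1n\sum_{i=1}^n W_i$ with
$$W_i = \frac{[\mu_0(x_i,a_i)+\Phi_{(x_i,a_i),:}\theta]_-}{q_1(x_i,a_i)} + \frac{\abs{(P-B)_{:,y_i}^\top \Phi\theta}}{q_2(y_i)}.$$
Since the pairs $(x_i,a_i)$ and the indices $y_i$ are drawn i.i.d., the $W_i$ are i.i.d.\ copies, and the unbiasedness already observed gives $\EE[W_i] = V_1(\theta)+V_2(\theta)$, hence $\EE[\widehat V_n(\theta)] = V_1(\theta)+V_2(\theta)$ as well.

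The single point deserving care is the precise \emph{range} of $W_i$, since its length enters the sample complexity squared and decides whether the stated constant is tight. The boundedness estimate recorded just before the lemma already controls the two pieces of each summand; what I would add is attention to their signs rather than taking a crude two-sided magnitude bound. The negative-part term is nonpositive, and by that estimate (using $\norm{\Phi_{(x,a),:}}/q_1 \le C_1$ with $\norm{\theta}\le S$, together with the $\mu_0$ contribution) lies in $[-S(C_1+1),0]$; the absolute-value term is nonnegative and lies in $[0,SC_2]$. Consequently $W_i$ takes values in an interval of length $R = S(C_1+1)+SC_2$. Bounding $\abs{W_i}$ symmetrically instead would double $R$ and inflate the final constant by four, so tracking the signs is exactly what makes the claimed bound match.

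It then remains to apply Hoeffding's inequality to this bounded i.i.d.\ average, giving
$$\Prob{\abs{\widehat V_n(\theta)-(V_1(\theta)+V_2(\theta))}\ge\epsilon} \le 2\exp\!\left(-\frac{2n\epsilon^2}{R^2}\right),$$
and to solve $2\exp(-2n\epsilon^2/R^2)\le\delta$ for $n$, yielding $n\ge \frac{R^2}{2\epsilon^2}\log(2/\delta)$ with $R = S(C_1+1)+SC_2$, exactly as stated. I do not anticipate a genuine obstacle here; the only things to watch are the asymmetric range just discussed and the fact that the guarantee is for a \emph{fixed} $\theta$, with the union bound over the finitely many grid iterates $\widehat\theta_k$ absorbed separately into the meta-algorithm's choice of confidence level.
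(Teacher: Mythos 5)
Your proposal is correct and matches the paper's own argument: the paper likewise treats $\widehat V_n(\theta)$ as an unbiased i.i.d.\ average whose summands lie in an interval of length $S(C_1+1)+SC_2$ (established in the discussion preceding the lemma) and applies Hoeffding's inequality to obtain exactly the stated sample size. Your extra care about the signs of the two terms (so that the range has length $R$ rather than $2R$) is precisely what is needed for the constant to come out as claimed.
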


\subsection{Choosing the Coarseness of the Grid}
We wish to construct the sequence $H_k$ such that $\max_{H_k\leq H\leq H_{k+1}} F(H)$ is always $\frac{\epsilon}{2}$, and hence we need control of the smoothness of $F(H)$. Recall that we will choose $H$ to approximately balance the two terms $H V(\theta) + \frac{\beta}{H} \leq H V_{\max} + \frac{\beta}{H}$, and so it suffices to only search for $H \geq \frac{\beta}{\sqrt{V_{\max}}}$. The maximum $H$ will be determined by $\epsilon$. 
\begin{lemma}
\label{lem:H_grid_bound}
Let $\epsilon>0$ be some desired error tolerance and $V_{\max}$ be some upper bound on $V_1(\theta) + V_2(\theta)$; we can always take $V_{\max}= 3 + S(d+2)$. Consider the $H_k$ sequence defined in Algorithm~\ref{alg:meta} by the base case $H_0 \df  \beta\left(\sqrt{V_{\max}}\right)^{-1}$, induction step
$H_{k+1} \df H_k + \epsilon \left( V_{\max} +\frac{\beta }{H_k^2} \right)^{-1}$, and terminal condition
$ K \df \min\left\{i\in\nat : H_i \geq \frac{2\beta}{\epsilon}\right\}$.
The grid $H_0,\ldots, H_K$ has the property that 
\begin{equation}
  \max_{H,H' \in [H_k, H_{k+1}]} \abs{F(H) - F(H')}\leq \epsilon.
\end{equation}
Additionally, we have $K = O(\log( 1/\epsilon))$.
\end{lemma}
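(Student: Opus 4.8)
The plan is to write $F(H)=G(H)+\beta/H$ with $G(H)\df\min_{\theta}c(H,\theta)$ and to control the two summands separately on each grid cell $[H_k,H_{k+1}]$. For the oscillation bound I will establish a local Lipschitz estimate, namely that $F$ is Lipschitz on $[H_k,H_{k+1}]$ with constant at most $V_{\max}+\beta/H_k^2$; the defining step size $H_{k+1}-H_k=\epsilon(V_{\max}+\beta/H_k^2)^{-1}$ then makes the worst-case oscillation over the cell exactly $\epsilon$. The bound on $K$ is a separate counting argument over the grid.

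For the Lipschitz estimate, note that for each fixed $\theta\in\Theta$ the map $H\mapsto c(H,\theta)=\ell^\top\Phi\theta+H(V_1(\theta)+V_2(\theta))$ is affine in $H$ with slope $V_1(\theta)+V_2(\theta)\in[0,V_{\max}]$, using $V_1(\theta)+V_2(\theta)\le 3+S(d+2)=V_{\max}$ as established after Theorem~\ref{thm:main_average}. Hence $G$, a pointwise minimum of these affine functions, is concave and nondecreasing; and for $H\le H'$, evaluating at the minimizer $\theta_H^*$ gives
\[
  G(H)\le G(H')\le c(H',\theta_H^*)=G(H)+(H'-H)\bigl(V_1(\theta_H^*)+V_2(\theta_H^*)\bigr)\le G(H)+V_{\max}(H'-H),
\]
so $G$ is $V_{\max}$-Lipschitz. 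On the cell $[H_k,H_{k+1}]$ the decreasing term $\beta/H$ has derivative of magnitude $\beta/H^2\le\beta/H_k^2$, hence is $(\beta/H_k^2)$-Lipschitz there. Adding the two constants, $F$ is $(V_{\max}+\beta/H_k^2)$-Lipschitz on the cell, and for any $H,H'\in[H_k,H_{k+1}]$ we get $|F(H)-F(H')|\le(V_{\max}+\beta/H_k^2)(H_{k+1}-H_k)=\epsilon$, which is the first assertion.

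The remaining, and principal, difficulty is the count $K=O(\log(1/\epsilon))$. The natural plan is to bound $K$ by summing the reciprocal increments, comparing $K$ to $\frac1\epsilon\int_{H_0}^{2\beta/\epsilon}(V_{\max}+\beta/H^2)\,dH$, and splitting the range at the crossover $H^\star=\sqrt{\beta/V_{\max}}$ at which the two terms in the denominator balance: for $H_k\le H^\star$ the increment is governed by $\beta/H_k^2$ (so $H_{k+1}\approx H_k+\epsilon H_k^2/\beta$), while for $H_k\ge H^\star$ the $V_{\max}$ term dominates and the increments are nearly constant. I expect this to be the crux of the argument. A purely worst-case use of $V_{\max}$ in the large-$H$ regime yields only a polynomial-in-$1/\epsilon$ count, so the logarithmic rate must come from exploiting that the true local slope of $G$ is the actual violation $V_1(\theta_H^*)+V_2(\theta_H^*)$, which decays well below the conservative $V_{\max}$ as $H$ grows and permits a much coarser effective grid; turning this observation into an $O(\log(1/\epsilon))$ cell count is the step I anticipate requiring the most care. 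I would finish by checking that the terminal index $K$ with $H_K\ge2\beta/\epsilon$ is well defined and that searching beyond it is unnecessary, since there the penalty $\beta/H\le\epsilon/2$ cannot be improved by more than $\epsilon$.
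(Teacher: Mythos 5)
Your oscillation bound is correct and is essentially the paper's own argument: the paper likewise shows that $H\mapsto c(H,\theta_H^*)$ is nondecreasing and $V_{\max}$-Lipschitz by evaluating at the minimizers on either side, and combines this with $\beta(1/H_k-1/H_{k+1})\le \beta(H_{k+1}-H_k)/H_k^2$ so that the per-cell oscillation is at most $(H_{k+1}-H_k)(V_{\max}+\beta/H_k^2)=\epsilon$. Your min-of-affine-functions framing is a cleaner way of saying the same thing, and that half of the proposal is fine.

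The gap is the count $K=O(\log(1/\epsilon))$, which you correctly identify as the crux and then do not prove. The rescue you sketch---exploiting that the true slope $V_1(\theta_H^*)+V_2(\theta_H^*)$ decays below $V_{\max}$---cannot work, because the grid is defined by the algorithm using the worst-case constant $V_{\max}$: the increment $H_{k+1}-H_k=\epsilon(V_{\max}+\beta/H_k^2)^{-1}$ is at most $\epsilon/V_{\max}$ no matter how flat $F$ actually is, so covering $[H_0,\,2\beta/\epsilon]$ requires at least $(2\beta/\epsilon-H_0)V_{\max}/\epsilon=\Omega(\epsilon^{-2})$ cells; since the increment is also bounded below by $\epsilon/(V_{\max}+\beta/H_0^2)$, the grid as defined has $K=\Theta(\epsilon^{-2})$. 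Your instinct that a worst-case use of $V_{\max}$ yields only a polynomial count is right, and you should know that the paper's own proof of this clause does not establish the stated bound either: it rewrites $H_{i+1}=H_i\left(1+\frac{\epsilon}{V_{\max}H_i+\beta/H_i}\right)$ and lower-bounds the growth ratio by $1+\epsilon/(2\beta V_{\max}/\epsilon+\sqrt{V_{\max}})=1+\Theta(\epsilon^2)$, from which its own displayed inequality gives $K=O(\epsilon^{-2}\log(1/\epsilon))$, not $O(\log(1/\epsilon))$. Moreover, in the worst case over instances (e.g.\ when $V_1(\theta)+V_2(\theta)$ is of order $V_{\max}$ for all $\theta$), the term $V_{\max}(H'-H)$ in the oscillation genuinely forces additive steps of size $\epsilon/V_{\max}$, so no grid satisfying the per-cell property over this range can beat $\Omega(\epsilon^{-2})$. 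Do not spend effort chasing the logarithmic count; the defensible conclusion is $K=\Theta(\epsilon^{-2})$, which propagates to a total of $O(\epsilon^{-6})$ subgradient steps for the meta-algorithm rather than the claimed $O(\epsilon^{-4})$.
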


\begin{proof}
  Our first goal is to bound $\max_{H, H'\in [H_i, H_{i+1}]} \abs{F(H) - F(H')}$. We first note that $c(H,\theta^*_H)$, which is a function of $H$ only, is increasing since 
\begin{align*}
  c(H,\theta^*_H)
  & =
    \min_{\theta}\ell^\top\Phi\theta + H(V_1(\theta)+V_2(\theta))\\
  &\leq
    \min_{\theta}\ell^\top\Phi\theta + (H+\delta)(V_1(\theta)+V_2(\theta))\\
  &=
  c(H+\delta,\theta^*_{H+\delta}).
\end{align*}
We also note that $c(H,\theta^*_H)$ is sublinear in $H$, and indeed 
\begin{align*}
  c(H+\delta,\theta^*_{H+\delta})
  &=
    \min_\theta \ell^\top\Phi\theta + (H+\delta)(V_1(\theta)+V_2(\theta))\\
  &\leq
    \ell^\top\Phi\theta^*_H + (H+\delta)(V_1(\theta^*_H)+V_2(\theta^*_H))\\
  &=
      c(H,\theta^*_{H}) + \delta(V_1(\theta^*_H)+V_2(\theta^*_H))\\
  &\leq c(H,\theta^*_{H}) + \delta V_{\max}.
\end{align*}
The two observations imply that
\begin{align*}
  \max_{H, H'\in [H_i, H_{i+1}]} \abs{ c(H',\theta^*_{H'}) - c(H,\theta^*_H)}
  \leq
  c(H_i,\theta^*_{H_i}) + V_{\max}\left( H_{i+1} - H_i\right),
\end{align*}
and hence we may bound
\begin{align*}
  \max_{H, H'\in [H_i, H_{i+1}]} \abs{F(H) - F(H')}
  &\leq
   \abs{c(H_{i+1}, \theta^*_{H_{i+1}}) - c(H_{i}, \theta^*_{H_i})}
    + \beta \max_{H_i\leq H\leq H_{i+1}}\abs{\frac{1}{H} - \frac{1}{H'}}\\
  &\leq
    (H_{i+1}-H_i)V_{\max}
    + \beta\left(\frac{1}{H_{i}} - \frac{1}{H_{i+1}}\right).
\end{align*}

We now check that the grid has the property that
\[
  V_{\max}(H_{i+1}-H_i)
  + \beta\left(\frac{1}{H_{i}} - \frac{1}{H_{i+1}}\right) \leq \epsilon
\]
for all $i \geq 0$. Defining $\Delta_i = H_{i+1} - H_i$, we see that $\Delta_i = \epsilon \left( V_{\max} +\frac{\beta }{H_i^2} \right)^{-1}$ for all $i$. The left hand side of the above condition is equal to 
\begin{align*}
  V_{\max}\Delta
  + \beta\left(\frac{1}{H_{i}} - \frac{1}{H_{i}+\Delta}\right)
=
  \Delta \left( V_{\max} +\frac{\beta }{H_i(H_i + \Delta)} \right)
  &\leq
    \Delta \left( V_{\max} +\frac{\beta }{H_i^2} \right)
  = \epsilon,
\end{align*}
giving us the desired condition.

Lastly, we calculate an upper bound on $K$, the number of grid points needed. We can write
\[
  H_{i+1} = H_i\left( 1 +  \frac{\epsilon} {V_{\max}H_i +\frac{\beta }{H_i}} \right),
\]
and using the bounds
$H_K \leq  \frac{2\beta}{\epsilon}$ and $H_i \geq \beta V_{\max}^{-\frac12}$, we have that
$V_{\max}H_i +\frac{\beta }{H_i} \leq 2\beta V_{\max}/\epsilon +\sqrt{V_{\max}}$, which implies that
\[
  H_k \geq H_0 \left( 1 +  \frac{\epsilon}{ 2\beta V_{\max}/\epsilon +\sqrt{V_{\max}}}\right)^k.
\]
Since we defined $K \df \min\left\{i\in\nat : H_i \geq \frac{2\beta}{\epsilon}\right\}$, we conclude that $K > K'$, where $K'$ is the smallest index such that 
\begin{align*}
  &H_0 \left( 1 +  \frac{\epsilon}{ 2\beta V_{\max}/\epsilon +\sqrt{V_{\max}}}\right)^{K'}
  \geq \frac{2\beta}{\epsilon}\\
    \Leftrightarrow \quad&
                     K' 
                           \geq
                           \frac{\log\left(\frac{2\sqrt{V_{\max}}}{\epsilon}\right)}{\log \left( 1 +  \frac{\epsilon}{ 2\beta V_{\max}/\epsilon +\sqrt{V_{\max}}}\right)},                           
  \end{align*}
  leading to the conclusion that $K = O(\log(1/\epsilon))$.
\end{proof}

\subsection{Meta-Algorithm Excess Loss Bound}
Combining the results from the last two section yields the following theorem.
\begin{theorem}
  \label{thm:H_opt_average_cost}
  For some $\epsilon>0$ and $\delta \in [0,1]$, the Meta-Algorithm specified in Figure~\ref{alg:meta} has excess loss
\begin{align}
  \mu_{\widehat\theta_T}^\top \ell
  &\leq
    \mu_{\theta}^\top \ell + O\left(\sqrt{V_1(\theta)+V_2(\theta)}\right)
    +O(\epsilon)
\end{align}
with probability at least $1-\delta$. It requires  $O\left(\epsilon^{-4}\right)$ subgradient steps and $O\left(\epsilon^{-2}\log(1/\delta)\right)$ samples to estimate the constraint violations.
\end{theorem}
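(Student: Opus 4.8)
Throughout, write $V(\theta):=V_1(\theta)+V_2(\theta)$ for brevity. The plan is to show that the meta-algorithm approximately solves the idealized program $\min_H F(H)$ from \eqref{eqn:grid_objective}, and that the value of this program already meets the target. First I would record that for \emph{any} comparator $\theta\in\Theta$, choosing $H=\sqrt{\beta/V(\theta)}$ pointwise gives
\[
  \min_H F(H)\le c\bigl(\sqrt{\beta/V(\theta)},\theta\bigr)+\sqrt{\beta V(\theta)}=\ell^\top\Phi\theta+2\sqrt{\beta V(\theta)},
\]
so the idealized optimum is within $O(\sqrt{V(\theta)})$ of $\ell^\top\Phi\theta$. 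Converting $\ell^\top(\mu_0+\Phi\theta)$ to the \emph{true} loss $\mu_\theta^\top\ell$ via Lemma~\ref{lem:V.to.stationary.distribution}, whose gap $\tau(\mu_\theta)\log(1/V_1(\theta))(2V_1(\theta)+V_2(\theta))+3V_1(\theta)$ is $\tilde O(V(\theta))$ and hence $O(\sqrt{V(\theta)})$ on the range $V\le V_{\max}$, then shows that the value of the idealized program is at most $\min_\theta \mu_\theta^\top\ell + O(\sqrt{V(\theta)})$, which is the target up to the constant offset $\ell^\top\mu_0$ that cancels in the argmin.

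Next I would account for the three approximations separating the meta-algorithm from this idealized program. (i) \emph{Grid coarseness:} Lemma~\ref{lem:H_grid_bound} guarantees $\min_k F(H_k)\le\min_H F(H)+\epsilon$ on the covered range $[H_0,H_K]$ using only $K=O(\log(1/\epsilon))$ points; the two tails are handled by the endpoints, since if the ideal $H$ exceeds $H_K\ge 2\beta/\epsilon$ then $V(\theta)\le\epsilon^2/(4\beta)$ and both $\beta/H_K$ and $H_K V(\theta)$ are $O(\epsilon)$, while $V(\theta)\ge V_{\max}$ is impossible. (ii) \emph{Optimization error:} for each grid value $H_k$, Lemma~\ref{lem:risk-bound} (a consequence of Theorem~\ref{thm:stoch-gradient}) shows the SGD output obeys $c(H_k,\widehat\theta_k)\le c(H_k,\theta^*_{H_k})+b_T$ with $b_T=O(\epsilon)$ once $T\ge H_k^2/\epsilon^2$, so $\widehat\theta_k$ nearly attains $F(H_k)$. (iii) \emph{Violation estimation:} Lemma~\ref{lem:V_hat_error} with a union bound over the $K$ grid points, absorbed by the $\log(4K/\delta)$ factor in $n$, gives $|\widehat V_k-V(\widehat\theta_k)|\le\epsilon/4$ simultaneously, so the empirical criterion $\widehat J_k=\ell^\top\Phi\widehat\theta_k+H_k\widehat V_k+\beta/H_k$ tracks the true objective $c(H_k,\widehat\theta_k)+\beta/H_k$ up to $H_k\,\epsilon/4$.

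To finish I would combine these: bound $\widehat J_{\widehat k}\le\widehat J_{k^\star}$ for the grid index $k^\star$ nearest the ideal $H$, expand the right-hand side using (i)--(iii) to get $\widehat J_{\widehat k}\le\min_H F(H)+O(\epsilon)+(\text{estimation slack})$, and convert the selected iterate back with Lemma~\ref{lem:V.to.stationary.distribution}; equivalently, since $\widehat\theta_{\widehat k}$ is exactly the SGD output at $H_{\widehat k}$, one may invoke Theorem~\ref{thm:main_average} directly at $H=H_{\widehat k}$ against the comparator $\theta$. The claimed complexity then follows by summing $T\le O(\epsilon^{-4})$ subgradient steps over the $K=O(\log(1/\epsilon))$ grid points, and $n=O(\epsilon^{-2}\log(K/\delta))$ violation samples per point.

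The hard part is the interaction of (iii) with the selection: the estimation error in $\widehat V_k$ enters the criterion multiplied by $H_k$, which ranges up to $2\beta/\epsilon$, so a naive bound on the slack $H_k\,\epsilon/4$ is only $O(1)$ rather than $O(\epsilon)$. The crux is therefore to show that minimizing $\widehat J_k$ cannot be driven to a pathologically large $H_{\widehat k}$: one must use that near the balance point the true terms $H V(\widehat\theta_k)+\beta/H$ dominate the $O(H\epsilon)$ perturbation, and conclude that the selected $H_{\widehat k}$ lies within a constant factor of $\sqrt{\beta/V(\theta)}$ --- precisely the window in which $2H_{\widehat k}V(\theta)+O(1/H_{\widehat k})=O(\sqrt{V(\theta)}+\epsilon)$, so that Theorem~\ref{thm:main_average} closes the bound. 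Verifying this robustness of the grid selection, rather than the routine concentration and convexity estimates, is where the real work lies.
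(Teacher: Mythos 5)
Your proposal follows the same route as the paper's proof: a grid over $H$ with $K=O(\log(1/\epsilon))$ points (Lemma~\ref{lem:H_grid_bound}), one run of Algorithm~\ref{alg:SGD} per grid point controlled by Theorem~\ref{thm:main_average}/Lemma~\ref{lem:risk-bound}, importance-sampled violation estimates controlled by Lemma~\ref{lem:V_hat_error} with a union bound over $k$, an $\argmin$ selection compared against the idealized program \eqref{eqn:grid_objective}, and a final conversion from $\ell^\top(\mu_0+\Phi\widehat\theta_{\hat k})$ to $\ell^\top\mu_{\widehat\theta_{\hat k}}$ via Lemma~\ref{lem:V.to.stationary.distribution} after bounding $V_1(\widehat\theta_{\hat k})+V_2(\widehat\theta_{\hat k})$. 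Your preliminary computation $\min_H F(H)\le \ell^\top\Phi\theta+2\sqrt{\beta V(\theta)}$ and your treatment of the two tails of the grid are correct and, if anything, more explicit than what appears in Section~\ref{sec:average.cost.proofs}.

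The one step you leave open is, however, a genuine gap in the proposal --- and it is worth noting that the paper's own proof elides exactly the same point: in the displayed chain of inequalities the paper passes from $H_{k^*}\widehat V_{k^*}$ to $H_{k^*}\bigl(V_1(\widehat\theta_{k^*})+V_2(\widehat\theta_{k^*})\bigr)+\tfrac{\epsilon}{4}$ citing Lemma~\ref{lem:V_hat_error}, whereas that lemma (with the stated $n$) only yields an additive slack of $H_{k^*}\cdot\tfrac{\epsilon}{4}$, which is $O(\beta)=O(1)$ when $H_{k^*}$ is near the upper end $2\beta/\epsilon$ of the grid. Your sketched repair --- that ``the true terms $HV(\widehat\theta_k)+\beta/H$ dominate the $O(H\epsilon)$ perturbation'' so the selected $H_{\hat k}$ stays within a constant factor of the balance point --- does not obviously go through: for large $H_k$ the SGD objective itself forces $V(\widehat\theta_k)=O(1/H_k)$, so the ``true'' term $H_kV(\widehat\theta_k)$ is only $O(1)$ and does not dominate an $O(H_k\epsilon)$ perturbation of the same order. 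The direct fix is instead to demand accuracy $\epsilon/(4H_k)$ from $\widehat V_k$ at grid point $k$, i.e.\ $n_k=O(H_k^2\epsilon^{-2}\log(K/\delta))$, which is $O(\epsilon^{-4}\log(1/\delta))$ at the largest grid points; this preserves the $O(\epsilon^{-4})$ overall runtime claimed in the theorem but weakens the advertised $O(\epsilon^{-2}\log(1/\delta))$ count of violation samples. As written, neither your argument nor the paper's establishes the selection step at the claimed sample budget, so you should either carry out the $n_k$-rescaling explicitly or supply a genuine argument that $H_{\hat k}$ and the comparator index cannot be driven to the top of the grid.
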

In particular, adapting to the optimal $H$ only introduces logarithmic terms to the run time.

\section{The Dual ALP for Discounted Cost}
\label{sec:discounted_cost}
We now change settings to discounted cost and try to find a policy with discounted cost almost as low as the best in the class. Most of the tools from the average cost carry over with small modifications, and we will focus on presenting the results in this section with most of the theorem proofs presented in the appendix. 

Recall that the LP we intend to approximately solve is
\begin{align*}
&\min_{\theta\in\Reals^{d}}~ \ell^\top\Phi\theta\,, \\
&\mbox{s.t.} \quad (B - \gamma P)^\top \Phi\theta  = \alpha,\quad \Phi\theta\geq 0.
\end{align*}
This LP has another interpretation. The dual of the approximate dual is
\begin{align*}
    & \max_{J \in \Reals^{\cX}}  \alpha^\top J  \\
    & \mbox{s.t.}          \Phi^\top \left( \ell + (\gamma P- B) J - z \right) =0, \\
    &                     z \geq 0,
\end{align*}
which can be viewed as the original primal with constraint aggregation.

\paragraph{Approximately solving the LP}
Analogous to $V_1$ and $V_2$, we define, relative to a feature matrix $\Phi$, the constraint violation functions
\begin{align*}
  V_3(\theta) & \df  \|\left[\Phi \theta \right]_{-}\|_1 \quad \text{ and }\\
  V_4(\theta) & \df \|(B-\gamma P)^T \Phi \theta  -\alpha\|
\end{align*}
so that we can approximate the solution of the LP by minimizing the convex surrogate
\begin{align}
\label{eqn:discounted_surrogate}
  c^\gamma(\theta)
  &\df \ell^\top \Phi\theta + H\left( V_3(\theta) + V_4(\theta)\right) \\
  & =
    \ell^\top \Phi \theta  + H \|\left[\Phi \theta \right]_{-}\|_1
    + H  \|(B-\gamma P)^\top \Phi \theta -\alpha \|_1 \notag\\
& =
    \ell^\top \Phi \theta  + H \sum_{(x,a)}\left[\Phi_{(x,a),:} \theta \right]_{-}
    + H\sum_{x'}  \left|(B-\gamma P)_{:,x'}^\top \Phi \theta -\alpha \right| \notag
\end{align}
with some constant $H$ and the constraint set $\Theta=\{\theta: \|\theta\|_2 \leq S\}$ .

We will minimize \eqref{eqn:discounted_surrogate} through stochastic subgradient descent by sampling 
$(x_t,a_t)\sim q_3\in\triangle_{\cX \times \cA}$ and $x_t' \sim q_4\in\triangle_{\cX}$ and calculating the unbiased estimator of the subgradient, 
\begin{align}
\label{eq:grad_discounted}
g^\gamma_t(\theta) &= \ell^\top \Phi - H \frac{\Phi_{(x_t,a_t),:}}{q_3(x_t,a_t)} \one{\Phi_{(x_t,a_t),:} \theta < 0} + H  \frac{(P-\gamma B)_{:,x_t'}^\top \Phi}{q_4(x_t')} \sgn((P-\gamma B)_{:,x_t'}^\top \Phi \theta).
\end{align}
The algorithm for the average cost case is exactly the same as Figure~\ref{alg:SGD} with $g_t^\gamma$ instead of $g_t$.
Recall that we are using the shorthand
\begin{align*}
J_\theta  = J_{\pi_{\Phi\theta}} \quad \text{ and }\quad
\nu_\theta = \nu_{\pi_{\Phi\theta}}.
\end{align*}
Thus, our objective is to show that $\alpha^\top J_{\widehat \theta_T}$ is small.

A key difference between the average and discounted cases is the interpretation for the dual variables, $\mu$ and $\nu$. In the average case, the feasible $\mu$ exactly corresponded to stationary distributions and therefore the average loss was precisely $\ell^\top\mu$. However, in the discounted case, the dual variables $\nu$ correspond to the expected discounted number of visits to each state and $\ell^\top\nu = \alpha^\top J$, where $J$ is the value function corresponding to policy $\pi_\nu$.

\subsection{A Excess Loss Bound for a Fixed $H$}
  Unlike the average cost case, the discounted cost case does not need a fast mixing assumption. Instead, we assume that the operator 1-norm of $\Phi$ is upper bounded by some constant $C$:
\begin{eqnarray}
  \|\Phi\|_1= \max_{x: \|x\|_1=1} \|\Phi x\|_1 =\max \limits_{1 \leq j \leq d} \sum _{(x,a)} | \Phi_{(x,a),j} | \leq C.
\end{eqnarray}
We also need to assume coverage of the constraint sampling distribution, analogously to the average cost case. We assume existence of constants $C_3$ and $C_4$ such that
\begin{align*}
C_3 \geq \max_{(x,a)\in [\cX]\times [\cA]}\frac{\norm{\Phi_{(x,a),:}}}{q_3(x, a)}\,, \qquad C_4 \geq \max_{x\in [\cX]}\frac{\norm{(P - \gamma B)_{:,x}^\top \Phi}}{q_4(x)}.
\end{align*}
Special structure may suggest natural choices of sampling distributions to ensure small $C_3$ and $C_4$. For example, if $P$ is sparse with support on only $N$ elements and if there is $C'>0$ such that for any $(x,a)$ and $i$, $\Phi_{(x,a), i}\le C'/(\cX \cA)$ and each column of $P$ has only $N$ non-zero elements, we can choose $q_3$ and $q_4$ to be uniform distributions and we can bound
\begin{align*}
\frac{\norm{\Phi_{(x,a),:}}}{q_3(x, a)} \le C'\,, \qquad \frac{\norm{(P - \gamma B)_{:,x}^\top \Phi}}{q_4(x)} \le C' (N + \cA) \; .
\end{align*}
Finally, note that we can always upper bound the constraint violation functions. For any $\theta \in \Theta$,
\begin{align*}
V_3(\theta) &\le  \norm{\Phi \theta}_1 \le \sum_{j=1}^d \sum_{(x,a)} \abs{\Phi_{(x,a),j}} |\theta_j| \leq C \|\theta\|_1 \leq C \sqrt{d} \|\theta\|_2 \leq \sqrt{d} \; CS, \text{ and }\\
V_4(\theta) &\le  \sum_{x'} \abs{B_{:,x'}^\top (  \Phi \theta) } +\gamma \sum_{x'} \abs{P_{:,x'}^\top (   \Phi \theta) } + \|\alpha\|_1 \\
&\le  \sum_{(x,a)} \left(\sum_{x'} B_{(x,a),x'} \right) |(\Phi\theta)_{(x,a)}|   +  \gamma \sum_{(x,a)} \left(\sum_{x'} P_{(x,a),x'} \right) |(\Phi\theta)_{(x,a)}|   + 1  \\
& = (1+\gamma)\|\Phi\theta \|_1 +1 \\
&\le  (1+\gamma) \sqrt{d} \; CS  +1.
\end{align*}
We can combine both statements and obtain
\begin{equation}
\label{eqn:V_bound_discounted}
   V_3(\theta) + V_4(\theta)
   \leq
   1+\sqrt{d}CS(2+\gamma)
   \leq
      4\sqrt{d}CS,
  \end{equation}
as long as $C\geq$ and $S\geq 1$.

The method we propose for optimizing $\pi_\theta$ in the discounted cost setting is to apply stochastic subgradient descent (from Figure~\ref{alg:SGD}) to subgradients $g^\gamma(\theta_t)$ defined in \eqref{eq:grad_discounted}. 
Our algorithm for optimizing discounted cost MDPs is just Figure~\ref{alg:SGD} run with subgradient $g^\gamma(\theta_t)$ (defined in \eqref{eq:grad_discounted}) instead of $g(\theta)$.

We now present the excess loss bound for discounted cost and a fixed $H$.
\begin{theorem}
\label{thm:main_discounted}
Consider an expanded efficient large-scale dual ALP problem and some error tolerance $\epsilon>0$, desired maximum probability of error $\delta>0$, and parameter $H\geq 1$. Running the stochastic subgradient method (Figure~\ref{alg:SGD} with $g^\gamma(\theta)$) with 
\begin{equation}
  \label{eq:discounted_min_T}
  T =
  \frac{S^2}{\epsilon^2}
  \left(H(C_3+C_4)+\sqrt{d}+2\sqrt{10 \log\frac{1}{\delta}}+2\sqrt{5d \log \left( 1 + \frac{S^2 T}{d} \right)}\right)^2
\end{equation}

and constant learning rate $\eta=S/(G'\sqrt{T})$, where $G' = \sqrt{d} + H (C_3 + C_4)$, yields a $\widehat \theta_T$ with
\begin{align*}
  \ell^{\top} \nu_{\widehat\theta_T}
  &\leq
    \ell^{\top} \nu_{\theta} +
    \left( \frac{6}{1-\gamma} + H \right)
    \left(V_3(\theta)+V_4(\theta)\right)
    +\frac{6\sqrt{d} CS}{H(1-\gamma)}+O(\epsilon).
\end{align*}
Constants hidden in the big-O notation are polynomials in $S$, $d$, $C_3$, $C_4$, and $C$.
\end{theorem}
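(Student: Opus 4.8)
The plan is to mirror the two-ingredient structure of the proof of Theorem~\ref{thm:main_average}. The first ingredient is a discounted analogue of Lemma~\ref{lem:V.to.stationary.distribution}: a bound showing that whenever the constraint violations $V_3(\theta)$ and $V_4(\theta)$ are small, the surrogate linear objective $\ell^\top\Phi\theta$ is close to the true discounted cost $\ell^\top\nu_\theta$. The second ingredient is the stochastic optimization guarantee, obtained by feeding the unbiased subgradient estimates $g^\gamma_t$ of \eqref{eq:grad_discounted} into Theorem~\ref{thm:stoch-gradient}. I would then combine these exactly as in the average-cost proof, and finally invoke \eqref{eq:primal_dual_rel} to read off that the resulting bound on $\ell^\top\nu_{\widehat\theta_T}$ is genuinely a bound on $\alpha^\top J_{\widehat\theta_T}$, the discounted cost of the executed policy.

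For the first ingredient I would argue as follows. Write $u=\Phi\theta$ and split $u=[u]_++[u]_-$. Because $\pi_\theta$ is defined from $[u]_+$, both $[u]_+$ and the true visitation $\nu_\theta$ factor through the same conditional policy, i.e. $[u]_+(x,a)=(B^\top[u]_+)(x)\,\pi_\theta(a|x)$ and $\nu_\theta(x,a)=(B^\top\nu_\theta)(x)\,\pi_\theta(a|x)$. Hence $\ell^\top[u]_+$ and $\ell^\top\nu_\theta$ differ only through their state marginals, and since $\ell\in[0,1]$ we get $|\ell^\top[u]_+-\ell^\top\nu_\theta|\le\|\Delta\|_1$ with $\Delta\df B^\top([u]_+-\nu_\theta)$. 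The shared-policy structure yields $(B-\gamma P)^\top([u]_+-\nu_\theta)=(I-\gamma P^{\pi_\theta})^\top\Delta$, while $\nu_\theta$ is exactly feasible, so $(B-\gamma P)^\top([u]_+-\nu_\theta)$ equals the residual $(B-\gamma P)^\top[u]_+-\alpha$, whose $1$-norm is at most $V_4(\theta)+(1+\gamma)V_3(\theta)$ using $\|(B-\gamma P)^\top v\|_1\le(1+\gamma)\|v\|_1$. Inverting through the Neumann series $(I-\gamma P^{\pi_\theta})^{-1}=\sum_k\gamma^k(P^{\pi_\theta})^k$ and using that $P^{\pi_\theta}$ is row-stochastic (so its transposed powers are $1$-norm nonexpansive) gives $\|\Delta\|_1\le\frac{1}{1-\gamma}\bigl(V_4(\theta)+(1+\gamma)V_3(\theta)\bigr)$. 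Adding $|\ell^\top u-\ell^\top[u]_+|\le V_3(\theta)$ produces the clean conclusion $|\ell^\top\nu_\theta-\ell^\top\Phi\theta|\le\frac{O(1)}{1-\gamma}\bigl(V_3(\theta)+V_4(\theta)\bigr)$.

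For the second ingredient I would verify the hypotheses of Theorem~\ref{thm:stoch-gradient} for the fixed cost $c^\gamma$: the estimator $g^\gamma_t$ is unbiased by construction and bounded by $F=\sqrt d+H(C_3+C_4)$ using the definitions of $C_3,C_4$, while $\Theta$ has radius $S$. Since the same $c^\gamma$ is optimized every round, Remark~\ref{rem:Jensen} converts the regret bound into $c^\gamma(\widehat\theta_T)-\min_\theta c^\gamma(\theta)\le b_T$, and the choice of $T$ in \eqref{eq:discounted_min_T} is precisely what forces $b_T=O(\epsilon)$. To combine, I start from surrogate optimality $\ell^\top\Phi\widehat\theta_T+H(V_3+V_4)(\widehat\theta_T)\le\ell^\top\Phi\theta+H(V_3+V_4)(\theta)+b_T$, apply the first ingredient to $\widehat\theta_T$ to pass upward from $\ell^\top\Phi\widehat\theta_T$ to $\ell^\top\nu_{\widehat\theta_T}$ and to $\theta$ to pass from $\ell^\top\Phi\theta$ up from $\ell^\top\nu_\theta$. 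The leftover term $\bigl(\tfrac{O(1)}{1-\gamma}-H\bigr)(V_3+V_4)(\widehat\theta_T)$ is dropped when nonpositive and otherwise controlled by $(V_3+V_4)(\widehat\theta_T)\le\frac{O(\sqrt d CS)}{H}+(V_3+V_4)(\theta)$, itself a consequence of surrogate optimality together with the uniform bound \eqref{eqn:V_bound_discounted}; this is what generates the $\frac{6\sqrt d CS}{H(1-\gamma)}$ term. Collecting everything yields the coefficient $\frac{6}{1-\gamma}+H$ on $V_3(\theta)+V_4(\theta)$ plus $O(\epsilon)$ from $b_T$.

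I expect the main obstacle to be the first ingredient — quantifying how the linear objective $\ell^\top\Phi\theta$ degrades into the true discounted cost $\ell^\top\nu_\theta$ when $\Phi\theta$ is infeasible. The subtle point, and the reason no mixing assumption is needed here, is that $\nu_\theta$ and $[\Phi\theta]_+$ share the induced policy $\pi_\theta$, so their entire discrepancy lives in the state-marginal and is governed by the discounted resolvent $(I-\gamma P^{\pi_\theta})^{-1}$, which contributes the $1/(1-\gamma)$ factors in place of the mixing time $\tau(\cdot)$ of the average-cost analysis. Everything downstream is routine bookkeeping with the stochastic-gradient bound, so once this lemma is in place the theorem follows by the same combination of inequalities used in the proof of Theorem~\ref{thm:main_average}.
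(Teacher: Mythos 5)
Your proposal is correct and follows essentially the same route as the paper: the paper's Lemma~\ref{lemma:key_discounted} plays the role of your first ingredient (it proves the slightly stronger statement $\|\nu_{\pi_\nu}-\nu\|_1\le\frac{3\epsilon'+\epsilon''}{1-\gamma}$ by expanding the visitation measure as the series $\nu_{\pi_h}^\top=h^\top-\sum_t\gamma^{t-1}v_0^\top M^h(PM^h)^{t-1}$, which is exactly the state-action lift of your Neumann-series/resolvent computation on the state marginal), and Lemma~\ref{lem:risk-bound-discounted} is your second ingredient. The combination step — passing from surrogate optimality to bounds on $V_3(\widehat\theta_T),V_4(\widehat\theta_T)$ of the form $\frac{1}{H}(2\sqrt{d}CS+HV_3(\theta)+HV_4(\theta)+b_T)$ and then applying the key lemma on both sides — matches the paper's proof line for line.
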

Because the proof is very similar to the average cost section, it has been deferred to  Section~\ref{sec:main_discounted_proof}.

\subsection{Error Bound}
Previous ADP literature concentrated on showing that the optimal value is well approximated if the feature space contains elements close to the optimum; i.e.\  $| \alpha^\top J_{\widehat\theta_T} -\alpha^\top J^*|$ was bounded in terms of $\min_{\theta} \|\Phi\theta-\nu^*\|_1$. Theorem~\ref{thm:main_discounted} is certainly more general, as it remains non-trivial even if
$\min_{\theta} \|\Phi\theta-\nu^*\|_1$ is large. For completeness, we provide a corollary of this form.

\begin{corollary}
\label{cor:discounted_error_bound}
Under the same conditions as Theorem~\ref{thm:main_discounted},
\begin{equation}
  \alpha^\top J_{\widehat\theta_T} -\alpha^\top J^* \leq  C_3
  \left( \frac{1}{1-\gamma} + \frac{1}{\epsilon} \right)
  \min_{\theta}  \|\Phi \theta   -\nu^* \|_1 + C_2 \frac{\epsilon}{1-\gamma}.
\end{equation}
\end{corollary}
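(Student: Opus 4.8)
The plan is to specialize Theorem~\ref{thm:main_discounted} to the single parameter $\theta^\circ \df \argmin_{\theta\in\Theta} \norm{\Phi\theta - \nu^*}_1$, where $\nu^*$ is the optimal solution of the exact dual \eqref{LP:exact_discounted_dual}, so that $\ell^\top\nu^* = \alpha^\top J^*$ by \eqref{eq:primal_dual_rel}. Applying Theorem~\ref{thm:main_discounted} at $\theta=\theta^\circ$ and using $\alpha^\top J_{\widehat\theta_T}=\ell^\top\nu_{\widehat\theta_T}$ gives
\[
\alpha^\top J_{\widehat\theta_T} \le \ell^\top\nu_{\theta^\circ} + \Big(\tfrac{6}{1-\gamma}+H\Big)\big(V_3(\theta^\circ)+V_4(\theta^\circ)\big) + \tfrac{6\sqrt{d}CS}{H(1-\gamma)} + O(\epsilon),
\]
and the whole task reduces to bounding each right-hand term by (a constant multiple of) the single quantity $\norm{\Phi\theta^\circ-\nu^*}_1 = \min_\theta\norm{\Phi\theta-\nu^*}_1$.

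First I would control the violation functions at $\theta^\circ$ using feasibility of $\nu^*$. Since $\nu^*\ge 0$, on every coordinate with $\Phi_{(x,a),:}\theta^\circ<0$ we have $\abs{[\Phi\theta^\circ]_-(x,a)}\le\abs{\Phi_{(x,a),:}\theta^\circ-\nu^*(x,a)}$, hence $V_3(\theta^\circ)\le\norm{\Phi\theta^\circ-\nu^*}_1$. Since $(B-\gamma P)^\top\nu^*=\alpha$, we get $V_4(\theta^\circ)=\norm{(B-\gamma P)^\top(\Phi\theta^\circ-\nu^*)}_1\le\norm{(B-\gamma P)^\top}_1\norm{\Phi\theta^\circ-\nu^*}_1$, and a one-line column-sum computation (each column of $(B-\gamma P)^\top$ has absolute sum $1+\gamma-2\gamma P(x|x,a)\le 1+\gamma$) gives $\norm{(B-\gamma P)^\top}_1\le 1+\gamma$. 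Therefore $V_3(\theta^\circ)+V_4(\theta^\circ)\le(2+\gamma)\norm{\Phi\theta^\circ-\nu^*}_1\le 3\norm{\Phi\theta^\circ-\nu^*}_1$.

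Second I would convert the stray term $\ell^\top\nu_{\theta^\circ}$ into $\alpha^\top J^*=\ell^\top\nu^*$. Splitting $\ell^\top\nu_{\theta^\circ}-\ell^\top\nu^* = \big(\ell^\top\nu_{\theta^\circ}-\ell^\top\Phi\theta^\circ\big)+\ell^\top(\Phi\theta^\circ-\nu^*)$, the second piece is at most $\norm{\ell}_\infty\norm{\Phi\theta^\circ-\nu^*}_1\le\norm{\Phi\theta^\circ-\nu^*}_1$ since $\ell\in[0,1]$. The first piece is exactly the gap between the true discounted cost of running $\pi_{\Phi\theta^\circ}$ and the surrogate objective $\ell^\top\Phi\theta^\circ$; this is controlled by the constraint violations through the discounted analogue of Lemma~\ref{lem:V.to.stationary.distribution} invoked inside the proof of Theorem~\ref{thm:main_discounted}, contributing a term of order $\tfrac{1}{1-\gamma}(V_3(\theta^\circ)+V_4(\theta^\circ))=O\big(\tfrac{1}{1-\gamma}\norm{\Phi\theta^\circ-\nu^*}_1\big)$. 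Combining the three steps, every $\norm{\Phi\theta^\circ-\nu^*}_1$ contribution carries either a $\tfrac{1}{1-\gamma}$ factor (from the $\tfrac{6}{1-\gamma}$ coefficient and the cost-conversion step) or an $H$ factor (from the $H(V_3+V_4)$ term), while the leftover constant term is $\tfrac{6\sqrt{d}CS}{H(1-\gamma)}$. Choosing $H=\Theta(1/\epsilon)$ turns the $H$ factor into a $1/\epsilon$ factor and the leftover into a term of order $\tfrac{\epsilon}{1-\gamma}$, which reproduces the claimed form with constants that are the polynomials in $S,d,C,C_3,C_4$ inherited from Theorem~\ref{thm:main_discounted}.

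I expect the main obstacle to be the cost-conversion step, namely bounding $\ell^\top\nu_{\theta^\circ}-\ell^\top\Phi\theta^\circ$: unlike the average-cost case, $\Phi\theta^\circ$ is not itself a discounted visit distribution, so one must argue that a nearly feasible dual vector induces a policy whose \emph{actual} visit distribution $\nu_{\theta^\circ}$ has cost close to the surrogate, incurring the unavoidable $\tfrac{1}{1-\gamma}$ amplification; once that estimate is in hand, the remainder is bookkeeping on constants and the selection of $H$.
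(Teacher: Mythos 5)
Your proposal is correct and follows essentially the same route as the paper's own proof: apply Theorem~\ref{thm:main_discounted} at the minimizer of $\norm{\Phi\theta-\nu^*}_1$, bound $V_3$ and $V_4$ there by $\norm{\Phi\theta-\nu^*}_1$ and $(1+\gamma)\norm{\Phi\theta-\nu^*}_1$ using $\nu^*\ge 0$ and $(B-\gamma P)^\top\nu^*=\alpha$, and control the remaining gap $\ell^\top\nu_{\theta^\circ}-\ell^\top\nu^*$ via the triangle inequality through $\Phi\theta^\circ$ together with Lemma~\ref{lemma:key_discounted}. The only difference is cosmetic bookkeeping in how that last gap is split; the constants and the $1/(1-\gamma)$ amplification come out the same way.
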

\begin{proof}[Proof of Corollary~\ref{cor:discounted_error_bound}]
Let $\theta^*$  be one of the vectors minimizing
$ \|\Phi\theta-\nu^*\|_1$. Theorem~\ref{thm:main_discounted} gives
\begin{eqnarray*}
  \alpha^\top J_{\theta_T} - \alpha^\top J_{\theta^*} \leq C_1
\left( \frac{1}{1-\gamma} + \frac{1}{\epsilon} \right)
\left(V_3(\theta^*)+V_4(\theta^*)\right) + C_2 \frac{\epsilon}{1-\gamma},
\end{eqnarray*}
Since $\nu^* \geq 0$ and by the simple fact that
$[x]_{-} \leq |y-x|$ for any $y \geq 0$, we have
\begin{eqnarray}
  V_3(\theta^*) \leq \|\Phi\theta-\nu^*\|_1.
  \label{eq:V_3_star}
\end{eqnarray}
For the term $V_4(\theta^*)$, since $\nu^*$ is feasible (i.e., $(B-\gamma P)^\top \nu^*=\alpha$)
\begin{align}
  V_4(\theta^*) &\leq \|(B-\gamma P)^\top  (\Phi \theta^* -\nu^*) \|_1 + \|(B-\gamma P)^\top \nu^*-\alpha\|_1 =\|(B-\gamma P)^\top  (\Phi \theta^* -\nu^*) \|_1 \notag\\
                & \leq \|(B-\gamma P)^\top \|_{1} \|\Phi\theta-\nu^*\|_1 \leq \left( \|B^\top \|_{1} +\gamma \|P^\top \|_{1} \right) \|\Phi\theta-\nu^*\|_1  \notag\\
                & =(1 + \gamma) \|\Phi\theta-\nu^*\|_1,
  \label{eq:V_4_star}
\end{align}
where $\|\cdot\|_1$ is the matrix operator 1-norm.
Therefore, we have,
\begin{equation*}
  \alpha^\top J_{\pi_{ [\Phi \widehat\theta_T]_{+}}} - \alpha^\top J_{\pi_{ [\Phi \theta^*]_{+}}} \leq C_1  \left( \frac{1}{1-\gamma} + \frac{1}{\epsilon} \right)
  (2 + \gamma) \|\Phi \theta^*  -\nu^*  \|_1 + C_2 \frac{\epsilon}{1-\gamma}.
  \label{eq:gap_opt_theta}
\end{equation*}
Next, we bound $\alpha^\top J_{\pi_{ [\Phi \theta^*]_{+}}} - \alpha^\top J^*$. Since $\alpha^\top J_{\pi_{ [\Phi \theta^*]_{+}}}= \ell^{\top} \nu_{\pi_{ [\Phi  \theta^*]_{+}}} $ and $\alpha^\top J^*= \ell^{\top} \nu^* $ and by Lemma~\ref{lemma:key_discounted},
\begin{align*}
  \alpha^\top J_{\pi_{ [\Phi \theta^*]_{+}}} - \alpha^\top J^* & \leq \|\ell\|_{\infty} \|\nu_{\pi_{ [\Phi  \theta^*]_{+}}} - \nu^*\|_1 \leq  \|\nu_{\pi_{ [\Phi  \theta^*]_{+}}} - \Phi \theta^*\|_1 + \|\Phi \theta^* - \nu^*\|_1 \\
  & \leq \frac{3 V_3(\theta^*) + V_4(\theta^*)}{1-\gamma}  + \|\Phi \theta^* - \nu^*\|_1  \leq  \frac{5}{1-\gamma}  \|\Phi \theta^* - \nu^*\|_1.
  \label{eq:gap_opt_mu}
\end{align*}
where the last inequality is due to \eqref{eq:V_3_star} and \eqref{eq:V_4_star}. The theorem statement follows from combining these two results.
\end{proof}

\subsection{The Meta-Algorithm for Discounted Cost}
\label{sec:discounted_meta}

\begin{figure}
\begin{center}
\framebox{\parbox{14cm}{
\begin{algorithmic}
\STATE \textbf{Input: }
Upper bound $V_{\max}$ on $V_3(\theta) + V_4(\theta)$, error tolerance $\epsilon>0$, error probability $\delta>0$, constraint estimation distributions $q_3$ and $q_4$
\STATE Initialize $H_0 \leftarrow \beta\left(\sqrt{V_{\max}}\right)^{-1}$ and $i \leftarrow 0$
\WHILE{ $H_i \leq \frac{2\beta}{\epsilon}$}
\STATE Set
$H_{i+1} \leftarrow H_i + \epsilon \left( V_{\max} +\frac{\beta }{H_i^2} \right)^{-1}$
\STATE Set $i \leftarrow i+1$
\ENDWHILE
\STATE $K \leftarrow i$
\FOR{$k=0,1,\ldots,K$}
\STATE Obtain $\widehat\theta_k$ from Algorithm \ref{alg:SGD} with
$T= O\left(H_k^2S^2\log\left(\frac{1}{\delta}\right)\right)$ set by \eqref{eq:discounted_min_T}
\STATE Set $n \leftarrow
     \frac{(S(C_3 + 2C_4))^2}{2\epsilon^2}\log\left(\frac{4K}{\delta}\right)$
\STATE Sample $y_1,\ldots, y_n\sim q_3$ and $(x_1,a_1),\ldots,(x_n,a_n)\sim q_4$
\STATE Set $ \widehat V_k \leftarrow \frac{1}{n}\displaystyle\sum_{i=1}^n  \left[\frac{[\mu_0(x_i,a_i)+\Phi_{(x_i,a_i),:}\hat\theta_k]_-}{q_3(x,a)}
  +\frac{\abs{(P-\gamma B)_{:,y_i}^\top \Phi\hat\theta_k}}{q_4(y_i)} \right]$
\ENDFOR
 \STATE Set $\hat k \leftarrow \argmin_k \ell^\top\Phi\hat\theta_k + \left(H_k+\frac{1}{1-\gamma}\right) \widehat V_k + \frac{\beta}{H_k(1-\gamma)}$
\STATE Return policy $\pi_{\widehat \theta_{\hat k}}$
\end{algorithmic}
}}
\end{center}
\caption{The Meta-algorithm for Discounted Cost}
\label{alg:meta_discounted}
\end{figure}

Analogously to the average cost case, setting $H$ correctly yields a excess loss bound of $O\left(\sqrt{V_3(\theta^*) + V_3(\theta^*)}\right) + O(\epsilon)$. The excess loss bound from Theorem~\ref{thm:main_discounted} suggests that we want $H$ and $\theta$ to optimize
\[
    \ell^{\top} \Phi\theta
    +\left( \frac{6}{1-\gamma} + H \right)
    \left(V_3(\theta)+V_4(\theta)\right)
    +\frac{\beta}{H},
\]
where we have defined $\beta\df \frac{6\sqrt{d}CS}{(1-\gamma)}$. The Meta-Algorithm for discounted cost, presented in Figure~\ref{alg:meta_discounted}, operates in a manner very similar to the average cost case: a grid $H_1,\ldots, H_K$ is chosen, the corresponding $\widehat\theta_k$ are computer, then $\pi_{\widehat\theta_{\hat k}}$, where 
\begin{equation*}
    \hat k \df \argmin_k \ell^\top\Phi\widehat\theta_k + \left(H_k+\frac{1}{1-\gamma}\right) \widehat V_k + \frac{\beta}{H_k},
  \end{equation*}
is returned.  We can prove the following bound for the meta-algorithm.
\begin{theorem}  
  \label{thm:H_opt_discounted_cost}
  For some $\epsilon>0$ and $\delta \in [0,1]$, the Meta-Algorithm for discounted cost (Figure~\ref{alg:meta_discounted} has excess loss
\begin{equation*}
  \ell^\top\nu_{\theta_{\hat k}} \leq \min_\theta \ell^\top\nu_\theta
  + O\left(\sqrt{V_3(\theta)+V_4(\theta)}\right)
  + O(\epsilon),
\end{equation*}
with probability at least $1-\delta$.
It requires  $O\left(\epsilon^{-4}\right)$ subgradient steps and $O\left(\epsilon^{-2}\log(1/\delta)\right)$ samples to estimate the constraint violations.
\end{theorem}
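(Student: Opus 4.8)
The plan is to follow the template of the average-cost meta-algorithm (Theorem~\ref{thm:H_opt_average_cost}), assembling three ingredients: the fixed-$H$ guarantee of Theorem~\ref{thm:main_discounted}, a discounted analogue of the grid-coarseness bound (Lemma~\ref{lem:H_grid_bound}), and a discounted analogue of the violation-estimation bound (Lemma~\ref{lem:V_hat_error}). Write $V(\theta)=V_3(\theta)+V_4(\theta)$, $\beta=6\sqrt d CS/(1-\gamma)$, and define the idealized objective $F^\gamma(H)=\min_\theta\{\ell^\top\Phi\theta+(H+\tfrac{6}{1-\gamma})V(\theta)\}+\beta/H$, so the optimization that the grid approximates is $\min_H F^\gamma(H)$. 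The endgame is an analytic optimization of $H$: balancing $H\,V(\theta)$ against $\beta/H$ at $H\asymp\sqrt{\beta/V(\theta)}$ converts the linear penalty into $O(\sqrt{\beta V(\theta)})$, and since $\beta$ and $V_{\max}=4\sqrt d CS$ (from \eqref{eqn:V_bound_discounted}) are constants in $\epsilon$, the boundedness $V(\theta)\le V_{\max}$ lets me fold the residual linear term $\tfrac{6}{1-\gamma}V(\theta)$ into $O(\sqrt{V(\theta)})$; combined with $\ell^\top\Phi\theta\le\ell^\top\nu_\theta+\tfrac{3V(\theta)}{1-\gamma}$ (Lemma~\ref{lemma:key_discounted} together with \eqref{eq:primal_dual_rel}), this yields $\min_H F^\gamma(H)\le\min_\theta\{\ell^\top\nu_\theta+O(\sqrt{V(\theta)})\}$.

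First I would establish the two helper lemmas. The grid $H_0,\dots,H_K$ is defined exactly as in Figure~\ref{alg:meta_discounted}, identical to the average-cost construction, so the proof of Lemma~\ref{lem:H_grid_bound} transfers once I verify that $H\mapsto\min_\theta c^\gamma(H,\theta)$ is nondecreasing and $V_{\max}$-Lipschitz (monotone because raising $H$ only increases the penalty, Lipschitz because an envelope of affine functions of $H$ with slopes in $[0,V_{\max}]$ has slope at most $V_{\max}$) and that $H\mapsto\beta/H$ contributes the telescoping term $\beta(1/H_i-1/H_{i+1})$; this gives $\max_{[H_k,H_{k+1}]}|F^\gamma(H)-F^\gamma(H')|\le\epsilon$ and $K=O(\log(1/\epsilon))$. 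The estimation lemma follows from Hoeffding applied to $\widehat V_k$: each summand is bounded by $S(C_3+2C_4)$ by the definitions of $C_3,C_4$, so the sample size $n=O(\epsilon^{-2}\log(K/\delta))$ gives $|\widehat V_k-V(\widehat\theta_k)|\le\epsilon$ simultaneously for all $k$ with probability $1-\delta/2$ after a union bound over the $K+1$ grid points; the $K$ runs of Algorithm~\ref{alg:SGD} succeed simultaneously with probability $1-\delta/2$ by a second union bound (explaining the $\log(K/\delta)$ inside $T$).

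The main chain then runs as follows. For the selected index I bound the true loss of the output by its observable surrogate: by Lemma~\ref{lemma:key_discounted}, $\|\nu_{\widehat\theta_{\hat k}}-\Phi\widehat\theta_{\hat k}\|_1\le\tfrac{3V(\widehat\theta_{\hat k})}{1-\gamma}$ (using $3V_3+V_4\le 3V$), and since $\ell^\top\nu=\alpha^\top J$ with $\|\ell\|_\infty\le1$, I get $\ell^\top\nu_{\widehat\theta_{\hat k}}\le\ell^\top\Phi\widehat\theta_{\hat k}+\tfrac{3}{1-\gamma}(\widehat V_{\hat k}+\epsilon)$. Because $H_0=\beta/\sqrt{V_{\max}}\ge\tfrac{2}{1-\gamma}$ (which holds since $C,S,d\ge1$), the coefficient $H_{\hat k}+\tfrac{1}{1-\gamma}$ in $\widehat G$ dominates $\tfrac{3}{1-\gamma}$, so $\ell^\top\nu_{\widehat\theta_{\hat k}}\le\widehat G(\hat k)+O(\epsilon)$ — crucially, here the estimation error enters only additively, not multiplied by $H$. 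By optimality of $\hat k$, $\widehat G(\hat k)\le\widehat G(k^*)$ for the grid point $k^*$ nearest the minimizer of $F^\gamma$; I then upper bound $\widehat G(k^*)$ using the surrogate-minimization property underlying Theorem~\ref{thm:main_discounted} (the discounted analogue of Lemma~\ref{lem:risk-bound}) plus the violation-estimation error to obtain $\widehat G(k^*)\le F^\gamma(H_{k^*})+(\text{errors})$, and close the loop with $F^\gamma(H_{k^*})\le\min_H F^\gamma(H)+\epsilon$ from the grid lemma and the analytic optimization above.

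The hard part is controlling the error introduced at $k^*$: replacing $\widehat V_{k^*}$ by $V(\widehat\theta_{k^*})$ costs $(H_{k^*}+\tfrac{1}{1-\gamma})\epsilon$, and $H_{k^*}$ can be as large as $2\beta/\epsilon$, so a naive bound produces an $O(\beta)$ constant rather than $O(\epsilon)$. The resolution is that the relevant grid point sits near the balancing value $H\asymp\sqrt{\beta/V(\theta^*)}$, where $H_{k^*}\epsilon\asymp\epsilon\sqrt{\beta/V(\theta^*)}$ is itself $O(\sqrt{V(\theta^*)})$ in the regime $V(\theta^*)\gtrsim\epsilon$ and is absorbed through the competing $\beta/H$ term in the near-feasible regime; making this dichotomy precise, so that the product of the potentially large coefficient $H_{k^*}$ with the $\epsilon$-accurate estimate stays inside the claimed $O(\sqrt{V(\theta)})+O(\epsilon)$ envelope, is the delicate step. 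Finally I would tally complexity: $K=O(\log(1/\epsilon))$ runs of Algorithm~\ref{alg:SGD}, each with $T=O(H_k^2 S^2\log(1/\delta))$ set by \eqref{eq:discounted_min_T} and $H_k=O(1/\epsilon)$ (as $\beta=O(1)$), giving $O(\epsilon^{-4})$ subgradient steps in total, together with $O(\epsilon^{-2}\log(1/\delta))$ samples for the violation estimates.
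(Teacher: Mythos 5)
Your proposal follows essentially the same route as the paper's proof in Section~\ref{sec:meta.analysis.discounted}: union bounds over the $K=O(\log(1/\epsilon))$ grid points for the SGD runs and the Hoeffding-based violation estimates, optimality of $\hat k$ compared against the grid point $k^*$ near the balancing value, Lemma~\ref{lemma:key_discounted} to pass from $\ell^\top\Phi\widehat\theta_{\hat k}$ to $\ell^\top\nu_{\widehat\theta_{\hat k}}$, and the analytic choice $H\asymp 1/\sqrt{V_3(\theta)+V_4(\theta)}$ to convert the linear penalty into $O(\sqrt{V_3(\theta)+V_4(\theta)})$. The one step you single out as delicate --- that replacing $\widehat V_{k^*}$ by $V_3(\widehat\theta_{k^*})+V_4(\widehat\theta_{k^*})$ costs $\bigl(H_{k^*}+\tfrac{1}{1-\gamma}\bigr)\epsilon$ rather than $\epsilon$, with $H_{k^*}$ potentially as large as $2\beta/\epsilon$ --- is precisely where the paper is loosest (its chain of inequalities silently records this as $+\tfrac{\epsilon}{4}$ without the $H_{k^*}$ multiplier), so your concern is legitimate and your sketched dichotomy is, if anything, more careful than the published argument rather than a departure from it.
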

For the proof and technical details, please see Section~\ref{sec:meta.analysis.discounted}.

\section{Experiments}
\label{sec:experiments}

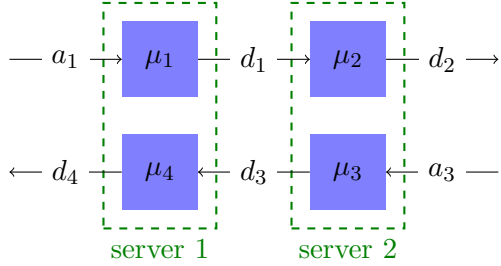
\begin{figure}[t]
\centering
\begin{tikzpicture}
  \node[rectangle, fill=blue!50!white, minimum width=1cm, minimum height=1cm] (q1) at (2, 2.5) {$\mu_1$};
  \node[rectangle, fill=blue!50!white, minimum width=1cm, minimum height=1cm] (q2) at (4.5, 2.5) {$\mu_2$};

  \node[rectangle, fill=blue!50!white, minimum width=1cm, minimum height=1cm] (q4) at (2, 1) {$\mu_4$};
  \node[rectangle, fill=blue!50!white, minimum width=1cm, minimum height=1cm] (q3) at (4.5, 1) {$\mu_3$};

  \draw[thick, dashed, green!50!black] (1.25, .25) node[below,xshift = .75cm] {server 1} rectangle(2.75,3.25);
  \draw[thick, dashed, green!50!black] (3.75, .25) node[below,xshift = .75cm] {server 2} rectangle(5.25,3.25);

  \draw[->] (q1) to [edge node={node [sloped,fill=white] {$d_1$}}]  (q2);
  \draw[->] (0,2.5) to [edge node={node [sloped,fill=white] {$a_1$}}]  (q1);
  \draw[->] (q2) to [edge node={node [sloped,fill=white] {$d_2$}}]  (6.5,2.5);

  \draw[->] (q3) to [edge node={node [sloped,fill=white] {$d_3$}}]  (q4);
  \draw[->] (q4) to [edge node={node [sloped,fill=white] {$d_4$}}]  (0,1);
  \draw[->] (6.5,1) to [edge node={node [sloped,fill=white] {$a_3$}}]  (q3);

\end{tikzpicture}
\caption{
\label{fig:4Dqueue}
The 4D queuing network. Customers arrive at queue $\mu_1$ or $\mu_3$ then are referred to queue $\mu_2$ or $\mu_4$, respectively. Server 1 can either process queue 1 or 4, and server 2 can only process queue 2 or 3.}
\end{figure}

In this section, we apply both algorithms to the four-dimensional discrete-time queuing network illustrated in Figure \ref{fig:4Dqueue}. This network has a relatively long history; see, e.g. \cite{Rybko-Stolyar-1992} and more recently \cite{DeFarias-VanRoy-2003} (c.f. Section 6.2). There are four queues, $\mu_1,\ldots,\mu_4$, each with state $0,\ldots,B$. Since the cardinality of the state space is $\cX=(1+B)^4$, even a modest $B$ results in huge state-spaces. For time $t$, let $X_t\in [\cX]$ be the state and $s_{i,t}\in\{0,1\}$, $i=1, 2, 3, 4$ denote whether queue $i$ is being served. Server 1 only serves queue 1 or 4, server 2 only serves queue 2 or 3, and neither server can idle. Thus, $s_{1,t}+s_{4,t}=1$ and $s_{2,t}+s_{3,t}=1$. The dynamics are as follows. At each time $t$, the following random variables are sampled independently: $A_{1,t}\sim\text{Bernoulli}(a_1)$, $A_{3,t}\sim\text{Bernoulli}(a_3)$, and $D_{i,t}\sim\text{Bernoulli}(d_i s_{i,t})$ for $i=1,2,3,4$. Using $e_1,\ldots,e_4$ to denote the standard basis vectors, the dynamics are:
\begin{align*}
X'_{t+1}=&X_t+A_{1,t}e_1+A_{3,t}e_3
+D_{1,t}(e_2-e_1)-D_{2,t}e_2
+D_{3,t}(e_4-e_3)-D_{4,t}e_4,
\end{align*}
and $X_{t+1}=\max(\mathbf{0},\min(\mathbf B,X'_{t+1}))$ (i.e. all four states are thresholded from below by 0 and above by $B$).
The loss function is the total queue size: $\ell(X_t)=||X_t||_1$. We compared our method against two common heuristics. In the first, denoted LONGER, each server operates on the queue that is longer with ties broken uniformly at random (e.g. if queue 1 and 4 had the same size, they are equally likely to be served). In the second, denoted LBFS (last buffer first served), the downstream queues always have priority (server 1 will serve queue 4 unless it has length 0, and server 2 will serve queue 2 unless it has length 0). These heuristics are common and have been used as benchmarks for queuing networks (e.g. \cite{DeFarias-VanRoy-2003}).

We used $a_1=a_3=.08$, $d_1=d_2=.12$, and $d_3=d_4=.28$, and buffer sizes $B_1=B_4=38$, $B_2=B_3=25$ as the parameters of the network.. The asymmetric size was chosen because server 1 is the bottleneck and tend to have longer queues. The first two features are features of the stationary distributions corresponding to two heuristics.  We also included two types of non-stationary-distribution features. For every interval $(0,5],(6,10],\ldots,(45,50]$ and action $A$, we added a feature $\psi$ with $\phi(x,a)=1$ if $\ell(x,a)$ is in the interval and $a=A$. To define the second type, consider the three intervals $I_1=[0,10]$, $I_2=[11, 20]$, and $I_3=[21, 25]$. For every 4-tuple of intervals $(J_1,J_2,J_3,J_4)\in\{I_1,I_2,I_3\}^4$ and action $A$, we created a feature $\psi$ with $\psi(x,a)=1$ only if $x_i\in J_i$ and $a=A$. Every feature was normalized to sum to 1. In total, we had 372 features which is about a $10^{4}$ reduction in dimension from the original problem.

\begin{figure*}[t]
\centering
\includegraphics[width=16cm]{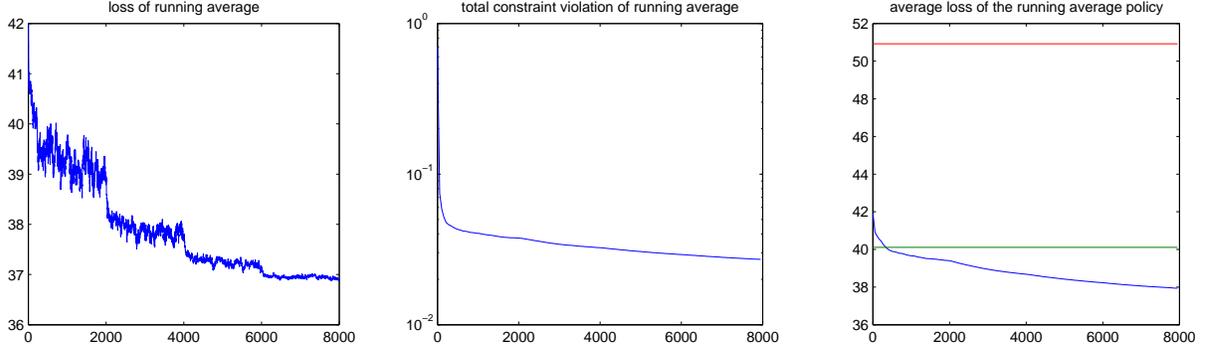}
\caption{
\label{fig:4Dplots}
The left plot is of the linear objective of the running average, i.e. $\ell^\top \Phi\widehat\theta_t$. The center plot is the sum of the two constraint violations of $\widehat\theta_t$, and the right plot is $\ell^\top \tilde\mu_{\widehat\theta_t}$ (the average loss of the derived policy). The two horizontal lines correspond to the loss of the two heuristics, LONGER and LBFS.}
\end{figure*}

We ran our stochastic subgradient descent algorithm with $I=1000$ sampled constraints and constraint gain $H=2$. Our learning rate began at $10^{-4}$ and halved every $2000$ iterations. The results of our algorithm are plotted in Figure \ref{fig:4Dplots}, where $\widehat\theta_t$ denotes the running average of $\theta_t$. The left plot is of the LP objective, $\ell^\top(\mu_0+\Phi\widehat\theta_t)$. The middle plot is of the sum of the constraint violations, $\norm{[\mu_0+\Phi\widehat\theta_t]_{-} }_1+ \norm{(P-B)^\top\Phi\widehat\theta_t }_1$. Thus, $c(\widehat\theta_t)$ is a scaled sum of the first two plots. Finally, the right plot is of the average losses, $\ell^\top\mu_{\widehat\theta_t}$ and the two horizontal lines correspond to the loss of the two heuristics, LONGER and LBFS. The right plot demonstrates that, as predicted by our theory, minimizing the surrogate loss $c(\theta)$ does lead to lower average losses.

All previous algorithms (including \cite{DeFarias-VanRoy-2003}) work with value functions, while our algorithm works with stationary distributions. Due to this difference, we cannot use the same feature vectors to make a direct comparison. The solution that we find in this different approximating set is slightly worse than the solution of \citet{DeFarias-VanRoy-2003}.

\section{Conclusion}
This paper demonstrated the feasibility of solving the MDP planning problem with a parametric policy class based on an approximate dual LP. Unlike previous approaches, we were able to prove \emph{excess loss bounds}, that is, bounds relative to the best policy in our parametric class. We obtained results for both the average cost and discounted cost settings as well as empirical justification.

There are several promising directions. First, are such excess loss bounds possible in the primal formulation?

Another drawback to our methods is that we need a backwards simulator, that is, access to every state with positive probability of transitioning into a state $x$. Are there alternative formulations that remove this requirement?

\bibliography{all_bib}

\section{Acknowledgments}
We gratefully acknowledge the support of the NSF through grant CCF-1115788 and of the ARC through an Australian Research Council Australian Laureate Fellowship (FL110100281).

\appendix

\section{Deferred Proofs for Average Cost}
\label{sec:average.cost.proofs}
\begin{proof}[Proof of Lemma~\ref{lem:V.to.stationary.distribution}]
Let $f = u^\top (P-B)$. From $\norm{u^\top (P-B)}_1 \le \epsilon''$, we get that for any $x'\in [\cX]$,
\begin{align*}
\sum_{(x,a)\in \mathcal S} & u(x,a) (P-B)_{(x,a), x'} = - \sum_{(x,a)\in \mathcal N} u(x,a) (P-B)_{(x,a), x'} + f(x')
\end{align*}
such that $\sum_{x'} \abs{f(x')} \le \epsilon''$. Let $h = [u]_+/\norm{[u]_+}_1$. Let $H' = \norm{h^\top (B-P)}_1$. We write
\begin{align*}
H' &= \sum_{x'} \abs{\sum_{(x,a)\in \mathcal S} h(x,a)  (B-P)_{(x,a),x'} } \\
&=  \frac{1}{1+\epsilon'} \sum_{x'} \abs{\sum_{(x,a)\in \mathcal S} u(x,a)  (B-P)_{(x,a),x'} } \\
&= \frac{1}{1+\epsilon'}  \sum_{x'} \abs{-\sum_{(x,a)\in \mathcal N} u(x,a) (B-P)_{(x,a),x'} + f(x')} \\
&\le \frac{1}{1+\epsilon'} \left( \sum_{x'} \abs{-\sum_{(x,a)\in \mathcal N} u(x,a) (B-P)_{(x,a),x'}} + \sum_{x'} \abs{f(x')} \right) \\
&\le \frac{1}{1+\epsilon'} \left( \epsilon'' + \sum_{(x,a)\in \mathcal N} \sum_{x'} \abs{u(x,a)} \abs{(B-P)_{(x,a),x'}} \right)  \\
&\le \frac{1}{1+\epsilon'} \left( \epsilon'' + \sum_{(x,a)\in \mathcal N} 2 \abs{u(x,a)} \right) \le \frac{2\epsilon' + \epsilon''}{1+\epsilon'} \\
&\le 2 \epsilon' + \epsilon''  \; .
\end{align*}
Vector $h$ is almost a stationary distribution in the sense that
\begin{equation}
\label{eq:almost-stationary}
\norm{h^\top (B-P)}_1 \le 2 \epsilon' + \epsilon''\; .
\end{equation}
Let $\norm{w}_{1,\mathcal S} = \sum_{(x,a)\in \mathcal S} \abs{w(x,a)}$. First, we have that
\begin{align*}
\norm{h - u}_1 &\le \norm{h - \frac{u}{1+\epsilon'}}_1 + \norm{u - \frac{u}{1+\epsilon'}}_{1,\mathcal S} \le 2 \epsilon' \; .
\end{align*}
Next we bound $\norm{ \mu_{h} - h}_1$. Using $\nu_0 = h$ as the initial state distribution, we will show that as we run policy $h$ (equivalently, policy $ \mu_{h}$), the state distribution converges to $ \mu_{h}$ and this vector is close to $h$. From \eqref{eq:almost-stationary}, we have $\mu_0^\top P = h^\top B + v_0$, where $v_0$ is such that $\norm{v_0}_1\le 2\epsilon'+\epsilon''$. Let $M^{h}$ be a $\cX\times (\cX\cA)$ matrix that encodes policy $h$, $M_{(i,(i-1)\cA+1)\mbox{-}(i,iA)}^{h}=h(\cdot|x_i)$. Other entries of this matrix are zero. We have
\begin{align*}
h^\top P M^{h} &= (h^\top B + v_0) M^{h} = h^\top B M^{h} + v_0 M^{h}= h^\top + v_0 M^{h} \,,
 \end{align*}
where we used the fact that $h^\top B M^{h} = h^\top$. Let $\mu_1^\top = h^\top P M^{h}$ which is the state-action distribution after running policy $h$ for one step. Let $v_1 = v_0 M^{h} P = v_0 P^{h}$ and notice that as $\norm{v_0}_1\le 2\epsilon'+\epsilon''$, we also have that $\norm{v_1}_1 = \norm{P^{h \top} v_0^\top}_1 \le \norm{v_0}_1\le 2\epsilon'+\epsilon''$. Thus,
\[
\mu_1^\top P = h^\top P + v_1 =  h^\top B + v_0 + v_1  \; .
\]
By repeating this argument for $k$ rounds, we obtain
\[
\mu_k^\top = h^\top + (v_0 + v_1 + \dots + v_{k-1}) M^{h}
\]
and it is easy to see that
\[\norm{(v_0 + v_1 + \dots + v_{k-1}) M^{h}}_1 \le \sum_{i=0}^{k-1} \norm{v_i}_1 \le k( 2\epsilon'+\epsilon'').
\] Thus, $\norm{\mu_k - h}_1 \le k (2 \epsilon'+\epsilon'')$. Now, notice that $\mu_k$ is the state-action distribution after $k$ rounds of policy $ \mu_{h}$. By the mixing assumption, $\norm{\mu_k -  \mu_{h}}_1 \le e^{-k/\tau(h)}$, so the choice of $k=\tau(h) \log (1/\epsilon')$ yields $\norm{ \mu_{h} - h}_1 \le \tau(h) \log(1/\epsilon') (2\epsilon'+\epsilon'') + \epsilon'$.

\end{proof}

\begin{proof}[Proof of Lemma~\ref{lem:risk-bound}]
  We prove the lemma by showing that conditions of Theorem~\ref{thm:stoch-gradient} are satisfied. The assumptions allow an easy bound on the subgradient estimate:
\[
\norm{g_t} \le \norm{\ell^\top \Phi} + H \frac{\norm{\Phi_{(x_t,a_t),:}} }{q_1(x_t,a_t)} + H  \frac{\norm{(P-B)_{:,x_t'}^\top \Phi}}{q_2(x_t')} \le \sqrt{d} + H (C_1 + C_2) \; .
\]

Also, we show that the subgradient estimate is unbiased:
\begin{align*}
\ex\left[g_t(\theta\right] &= \ell^\top \Phi - H \sum_{(x,a)} q_1(x,a) \frac{\Phi_{(x,a),:}}{q_1(x,a)} \one{\mu_0(x,a)+\Phi_{(x,a),:} \theta < 0} \\
&\qquad\qquad\qquad\qquad+ H \sum_{x'} q_2(x') \frac{(P-B)_{:,x'}^\top \Phi}{q_2(x')} \sgn((P-B)_{:,x'}^\top \Phi \theta) \\
&= \ell^\top \Phi - H \sum_{(x,a)} \Phi_{(x,a),:} \one{\mu_0(x,a)+\Phi_{(x,a),:} \theta < 0}+ H \sum_{x'}  (P-B)_{:,x'}^\top \Phi \sgn((P-B)_{:,x'}^\top \Phi \theta) \\
&= \nabla_{\theta} c(\theta) \; .
\end{align*}
The result then follows from Theorem~\ref{thm:stoch-gradient} and Remark~\ref{rem:Jensen}.

  It is also convenient to bound the norm of the gradient.
If $\mu_0(x,a)+\Phi_{(x,a),:} \theta \ge 0$, then $\nabla_{\theta} \abs{[\mu_0(x,a)+\Phi_{(x,a),:} \theta]_{-}} = 0$. Otherwise, $\nabla_{\theta} \abs{[\mu_0(x,a)+\Phi_{(x,a),:} \theta]_{-}} = -\Phi_{(x,a),:}$.
Calculating,
\begin{equation}\label{eq:cost_gradient}
\begin{split}
\nabla_{\theta} c(\theta) &= \ell^\top \Phi + H \sum_{(x,a)} \nabla_{\theta} \abs{[\mu_0(x,a)+\Phi_{(x,a),:} \theta]_{-}}+ H \sum_{x'}\nabla_{\theta} \abs{ (P-B)_{:,x'}^\top \Phi \theta}  \\
&= \ell^\top \Phi - H \sum_{(x,a)} \Phi_{(x,a),:} \one{\mu_0(x,a)+\Phi_{(x,a),:} \theta < 0}+ H \sum_{x'}  (P-B)_{:,x'}^\top \Phi \sgn((P-B)_{:,x'}^\top \Phi \theta)   \,,
\end{split}
\end{equation}
where $\sgn(z) = \one{z>0} - \one{z<0} $ is the sign function. Let $\pm$ denote the plus or minus sign (the exact sign does not matter here). We have that
\begin{align*}
\norm{\nabla_{\theta} c(\theta)} &\le H \sqrt{\sum_{i=1}^d \left(\sum_{x'} \left(\pm\sum_{(x,a)} (P-B)_{(x,a),x'} \Phi_{(x,a),i} \right) \right)^2 } +\norm{ \ell^\top \Phi} + H \sqrt{\sum_{i=1}^d \left(\sum_{(x,a)} \abs{\Phi_{(x,a),i}} \right)^2 } \; .
\end{align*}
Thus,
\begin{align*}
\norm{\nabla_{\theta} c(\theta)} &\le \sqrt{\sum_{i=1}^d (\ell^\top \Phi_{:,i})^2} + H \sqrt{d} + H \sqrt{\sum_{i=1}^d \left(\sum_{(x,a)} \left(\pm\sum_{x'} (P-B)_{(x,a),x'}  \right) \Phi_{(x,a),i} \right)^2 } \\
&\le \sqrt{d} + H \sqrt{d}  + H \sqrt{\sum_{i=1}^d \left(2 \sum_{(x,a)} \abs{\Phi_{(x,a),i}} \right)^2 } =\sqrt{d} (1 + 3 H )  \,,
\end{align*}
where we used $\abs{ \ell^\top \Phi_{:,i}} \le \norm{\ell}_\infty \norm{\Phi_{:,i}}_1 \le 1$.
\end{proof}

\begin{proof}[Proof of Theorem~\ref{thm:H_opt_average_cost}]
  By Theorem~\ref{thm:main_average}, running Algorithm~\ref{alg:SGD} for a given $H_k$ with $T_k = \max\left\{ 16\frac{H_k^2}{\epsilon^2}, 160 S^2 \log\left(\frac{2K}{\delta}\right)\right \}$ produces a $\widehat \theta_k$ with
\[
  c(H_k,\widehat\theta_k)\leq c(H_k,\theta_k^*) + H_kV(\theta^*)+ \frac{\beta}{H_k} + \frac{\epsilon}{4},
\]
where $\theta^*_k = \min_\theta C(H_k,\theta)$, and the probability of error for any single $\widehat\theta_k$ is guaranteed to be at most $\frac{\delta}{2K}$. Hence, the union bound implies that the total probability of error of any $\widehat\theta_k$ is at most $\frac{\delta}{2}$. Similarly, with our choice of $n = \frac{8(S(C_1+1)+SC_2)^2} {\epsilon^2}\log\left(\frac{4K}{\delta}\right)$, Lemma~\ref{lem:V_hat_error} guarantees that  $\abs{V_1(\widehat\theta_k)+V_2(\widehat\theta_k) -
  \widehat V_k}\leq\frac{\epsilon}{4}$
holds for all $k$ simultaneously with probability at least $1-\frac{\delta}{2}$

With these two observations, we can bound the suboptimality of the objective. Recalling that $\hat k$ is the minimizer of $\ell^\top\Phi\widehat\theta_{k} + H_{k} \widehat V_k + \frac{\beta}{H_{k}}$, and using $k^*$ as the minimizer of
$c(H_k,\theta_k^*) + \frac{\beta}{H_k}$, we have
    \begin{align*}
      \ell^\top\Phi\widehat\theta_{\hat k} + H_{\hat k} \widehat V_{\hat k}
      + \frac{\beta}{H_{\hat k}}
      &=
        \min_k \ell^\top\Phi\widehat\theta_{k} + H_{k} \widehat V_k
      + \frac{\beta}{H_{k}}\\
      &\leq
        \ell^\top\Phi\widehat\theta_{k^*} + H_{k^*} \widehat V_{k^*}
        + \frac{\beta}{H_{k^*}}\\
      &\leq
        \ell^\top\Phi\widehat\theta_{k^*} + H_{k^*} (V_1(\hat\theta_{k^*}) + V_2(\hat\theta_{k^*}))
         + \frac{\beta}{H_{k^*}} + \frac{\epsilon}{4}
      &\text{(Lemma~\ref{lem:V_hat_error})}\\
      &=
        c(H_{k^*},\widehat \theta_{k^*})
         + \frac{\beta}{H_{k^*}} + \frac{\epsilon}{4}\\
      &\leq
        c(H_{k^*},\theta_{k^*}^*) + \frac{\beta}{H_{k^*}}
        + \frac{\epsilon}{2}\\
      &=\min_k
        c(H_k,\theta_k^*) + \frac{\beta}{H_k}
        + \frac{\epsilon}{2}\\
      &\leq  \min_{H,\theta} c(H,\theta) + \frac{\beta}{H}
        + \epsilon.
      &\text{(Lemma~\ref{lem:H_grid_bound})}
\end{align*}
One final application of the union bound guarantees that the statement holds with probability $1-(\frac{\delta}{2} + \frac{\delta}{2})$. Hence, the Meta-algorithm minimizes the objective to within $\epsilon$.

We next relate the suboptimality of the objective optimization to the suboptimality of the true loss $\ell^\top \mu_{\theta_{\hat k}}$. Since all quantities are non-negative, this implies that $\abs{\frac{\beta}{H_{\hat k}} - \frac{\beta}{H^*}}\leq\epsilon$. Finally, we can put together the excess loss bound. To apply Lemma 3 and bound the distance between
$\ell^\top\Phi\mu_{\widehat\theta_{\hat k}}$ and
$\ell^\top\Phi\widehat\theta_{\hat k}$, we first need to bound $V_1(\widehat\theta_{\hat k})$ and $V_2(\widehat\theta_{\hat k})$. Using the bounded suboptimality of $\widehat\theta_{\hat k}$ as an optimizer of $c(H_{\hat k},\theta)$, we have
\begin{align*}
  \ell^\top\Phi\widehat\theta_{\hat k}
  + H_{\hat k}\left(V_1(\widehat\theta_{\hat k}) + V_2(\widehat\theta_{\hat k})\right)
  &\leq
  \ell^\top\Phi\theta^*_{\hat k}
  + H_{\hat k}\left(V_1(\theta^*_{\hat k}) + V_2(\theta^*_{\hat k})\right)+ \frac{\epsilon}{2}\\
  & \leq
  \ell^\top\Phi\theta^*
  + H^*\left(V_1(\theta^*) + V_2(\theta^*)\right)+ \epsilon
\end{align*}
and can conclude that
\begin{align*}
  V_1(\widehat\theta_{\hat k})
  &\leq
    \frac{1}{H_{\hat k}}\left(
    2(S+1) + \sqrt{V_1(\theta^*) + V_2(\theta^*)}\right)\\
  &\leq
    \left(\frac{1}{H^*}+\epsilon\right)
    \left(
    2(S+1) + \sqrt{V_1(\theta^*) + V_2(\theta^*)}
    \right)\\
  &=
     (2(S+1)+\epsilon)  \sqrt{V_1(\theta^*) + V_2(\theta^*)}
    + (V_1(\theta^*) + V_2(\theta^*))
    + 2(S+1)\epsilon.
\end{align*}
Completely analogous reasoning gives the same bound on  $V_2(\widehat\theta_{\hat k})$.

Then, applying Lemma~\ref{lem:V.to.stationary.distribution}, we have
\begin{align*}
\ell^\top\Phi\mu_{\theta_{\hat k}}
  &\leq
    \ell^\top\Phi\widehat\theta_{\hat k}
    + 4\tau(\mu_{\theta_{\hat k}})\log(1/\epsilon')
    \left(
    (2(S+1)+\epsilon)  \sqrt{V_1(\theta^*) + V_2(\theta^*)}
    + (V_1(\theta^*) + V_2(\theta^*))
    + 2(S+1)\epsilon
    \right)\\
  &\leq
    \ell^\top\Phi\widehat\theta^*
    + 4\tau(\mu_{\theta_{\hat k}})\log(1/\epsilon')
    \left(
    (2(S+1)+\epsilon)  \sqrt{V_1(\theta^*) + V_2(\theta^*)}
    + (V_1(\theta^*) + V_2(\theta^*))
    + 2(S+1)\epsilon
    \right)\\
  &\quad
    + H^* (V_1(\theta^*) + V_2(\theta^*)) + \frac{\beta}{H^*} + \epsilon\\
  &\leq
    \ell^\top\mu_{\theta^*}
    + 4\tau(\mu_{\theta_{\hat k}})\log(1/\epsilon')
    \left(
    (2(S+1)+\epsilon)  \sqrt{V_1(\theta^*) + V_2(\theta^*)}
    + (V_1(\theta^*) + V_2(\theta^*))
    + 2(S+1)\epsilon
    \right)\\
  &\quad
    + H^* (V_1(\theta^*) + V_2(\theta^*)) + \frac{\beta}{H^*} + \epsilon
    + (V_1(\theta^*) + V_2(\theta^*)).
\end{align*}
Plugging in $H^* = \left(\sqrt{V_1(\theta)+V_2(\theta)}\right)^{-1}$ produces
\begin{equation*}
  \ell^\top\mu_{\theta_{\hat k}} \leq \min_\theta \ell^\top\mu_\theta + O\left(\sqrt{V_1(\theta)+V_2(\theta)}\right)
  + O\left(V_1(\theta)+V_2(\theta)\right) + O(\epsilon).
\end{equation*}
The theorem statement follows by recalling that $V_1(\theta)+ V_2(\theta) \leq 1$.

Let us turn to the complexity. The total number of subgradient descent steps is bounded by
\[
  K T_{K} = 16\frac{2\beta^2}{\epsilon^4}                      \frac{\log\left(\frac{2\sqrt{V_{\max}}}{\epsilon}\right)}{\log \left( 1 +  \frac{\epsilon}{ 2\beta V_{\max}/\epsilon +\sqrt{V_{\max}}}\right)}
  = O\left(\epsilon^{-4}\right)
\]
and the total number of samples needed to estimate the violation function is
\[
  nK
  =
  \frac{8(S(C_1+1)+SC_2)^2} {\epsilon^2}\log\left(\frac{4K}{\delta}\right)
  \frac{\log\left(\frac{2\sqrt{V_{\max}}}{\epsilon}\right)}{\log \left( 1 +  \frac{\epsilon}{ 2\beta V_{\max}/\epsilon +\sqrt{V_{\max}}}\right)}
  =
  O\left(\epsilon^{-2} \log(1/\delta)\right).
\]
\end{proof}

\section{Discounted Cost Excess Loss Analysis}
\label{sec:main_discounted_proof}
This section presents the necessary technical tools and the proof of Theorem~\ref{thm:main_discounted}. We begin by showing that if some vector $\nu$ is close to a feasible point of the LP, then it almost equals the expected frequencies of visits of the policy $\pi_\nu$ (when the system runs under the policy $\pi_{h}$ with the initial distribution $\alpha$), i.e.,
  \begin{eqnarray}
    \nu_{\pi_\nu}(x,a) =\sum_{x'} \alpha(x') \sum_{t=1}^{\infty} \gamma^{t-1} P^{\pi_{h}}\left(x_t=x, a_t=a| x_1 =x' \right).
    \label{eq:mu_pi_h}
  \end{eqnarray}
\begin{lemma}
  \label{lemma:key_discounted}
  For any vector $\nu \in \mathbb{R}^{\cX\cA}$, let $\mathcal{N}$ be the set of points $(x,a)$ where $\nu(x,a)\leq 0$ and $\mathcal{S}=\mathcal{N}^c$ and define the constants $\sum_{(x,a) \in \mathcal{N}} | \nu(x,a)| = \epsilon'$ and $\|(B-\gamma P)^\top \nu -\alpha\|_1=\epsilon''$. Further assume that for each $x$, there exists an $a$ such that $(x,a) \in \mathcal{S}$. Then, for the policy $\pi_{\nu}$ define by
   \begin{eqnarray}
    \pi_{\nu}(a|x)
     = \frac{[\nu(x,a)]_{+}}{\sum_{a'} [\nu(x,a')]_{+}},
    \label{eq:pi_nu}
   \end{eqnarray}
the expected frequencies of visits under the policy is close to $\nu$:
  \begin{eqnarray*}
    \|\nu_{\pi_\nu}- \nu\|_{1} \leq \frac{3 \epsilon' + \epsilon''}{1-\gamma}.
  \end{eqnarray*}
\end{lemma}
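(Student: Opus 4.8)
The plan is to mirror the structure of the proof of Lemma~\ref{lem:V.to.stationary.distribution}, but to replace the mixing argument (which is unavailable here) by the geometric contraction supplied by the discount factor $\gamma$. Write $\nu = [\nu]_+ + [\nu]_-$ and set $h \df [\nu]_+$. Since $\norm{h - \nu}_1 = \norm{[\nu]_-}_1 = \sum_{(x,a)\in\mathcal N}\abs{\nu(x,a)} = \epsilon'$, the triangle inequality reduces the claim to bounding $\norm{\nu_{\pi_\nu} - h}_1$. The key observation is that $h$ is already \emph{consistent with the policy} $\pi_\nu$: by definition $\pi_\nu(a|x) = [\nu(x,a)]_+/\sum_{a'}[\nu(x,a')]_+$, so $h(x,a) = \pi_\nu(a|x)(B^\top h)(x)$, i.e.\ $h$ factors as the product of the policy and its own state-marginal. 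The assumption that every state has some $(x,a)\in\mathcal S$ guarantees these marginals are strictly positive, so $\pi_\nu$ and hence $h$ are well defined.

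Next I would show that $h$ nearly satisfies the dual equality constraint. Writing $e \df (B-\gamma P)^\top h - \alpha = \bigl((B-\gamma P)^\top\nu - \alpha\bigr) - (B-\gamma P)^\top[\nu]_-$, and using the operator-norm bounds $\norm{B^\top}_1 = \norm{P^\top}_1 = 1$ (each row of $B$ and each row of $P$ sums to one), I get $\norm{(B-\gamma P)^\top}_1 \le 1+\gamma$ and therefore $\norm{e}_1 \le \epsilon'' + (1+\gamma)\epsilon'$. By Theorem~\ref{thm:dual-lp-mdp}, the visit-frequency vector $\nu_{\pi_\nu}$ satisfies the constraint \emph{exactly}, $(B-\gamma P)^\top\nu_{\pi_\nu} = \alpha$, and it too factors through $\pi_\nu$.

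The main step is then to propagate this residual into an $\ell_1$ bound on $\delta \df \nu_{\pi_\nu} - h$. Because both terms factor through $\pi_\nu$, so does $\delta$: writing $\Delta \df B^\top\delta$ for its state-marginal, we have $\delta(x,a) = \pi_\nu(a|x)\Delta(x)$, and a direct computation collapses the state-action identity $(B-\gamma P)^\top\delta = -e$ to the $\cX$-dimensional linear system $(I - \gamma (P^{\pi_\nu})^\top)\Delta = -e$. This is exactly where the discount factor earns its keep: $\norm{(P^{\pi_\nu})^\top}_1 = 1$ since $P^{\pi_\nu}$ is stochastic, so the Neumann series gives $\norm{(I-\gamma(P^{\pi_\nu})^\top)^{-1}}_1 \le \sum_{k\ge 0}\gamma^k = (1-\gamma)^{-1}$, and hence $\norm{\Delta}_1 \le \norm{e}_1/(1-\gamma)$. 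Finally $\norm{\delta}_1 = \sum_x \abs{\Delta(x)}\sum_a \pi_\nu(a|x) = \norm{\Delta}_1$, so combining with $\norm{h-\nu}_1 = \epsilon'$ yields $\norm{\nu_{\pi_\nu}-\nu}_1 \le \frac{\epsilon''+(1+\gamma)\epsilon'}{1-\gamma} + \epsilon' = \frac{2\epsilon'+\epsilon''}{1-\gamma}$, which is even slightly sharper than the stated $\frac{3\epsilon'+\epsilon''}{1-\gamma}$. I expect the main obstacle to be the bookkeeping in the reduction to the state-space system --- verifying that $\delta$ inherits the policy-consistent factorization and that $P^\top\delta = (P^{\pi_\nu})^\top\Delta$ --- after which the contraction bound is immediate.
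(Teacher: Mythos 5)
Your proof is correct, and it runs on the same engine as the paper's: both arguments bound the constraint residual of $h=[\nu]_+$ (you get $\epsilon''+(1+\gamma)\epsilon'$, the paper gets $2\epsilon'+\epsilon''$) and then convert that residual into an $\ell_1$ bound on $\nu_{\pi_\nu}-h$ by summing a geometric series in $\gamma$, which is where the $1/(1-\gamma)$ factor comes from. The packaging differs in two ways worth noting. The paper works entirely in the state-action space: it writes out $\nu_{\pi_h}^\top=\sum_{t\ge1}\gamma^{t-1}\alpha^\top M^h(PM^h)^{t-1}$ explicitly, substitutes $\alpha^\top M^h = h^\top - \gamma h^\top PM^h - v_0^\top M^h$, telescopes, and bounds each remainder term $z_t = v_0^\top M^h(PM^h)^{t}$ by induction. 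You instead take the exact feasibility of $\nu_{\pi_\nu}$ (Theorem~\ref{thm:dual-lp-mdp}) as a black box, observe that both $\nu_{\pi_\nu}$ and $h$ factor as $\pi_\nu(a|x)$ times a state marginal --- so their difference does too --- and collapse the problem to the $\cX$-dimensional system $(I-\gamma(P^{\pi_\nu})^\top)\Delta=-e$, which you invert by a Neumann series. These are the same computation (the paper's telescoped series is exactly your Neumann series lifted back to the state-action space), but your reduction is tidier, makes the role of the contraction explicit, and yields the marginally sharper constant $(2\epsilon'+\epsilon'')/(1-\gamma)$, both because $\smallnorm{(B-\gamma P)^\top}_1\le 1+\gamma$ rather than $2$ and because the leftover $\epsilon'$ from $\smallnorm{h-\nu}_1$ folds in exactly. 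The one step you flagged as a possible obstacle --- that $\delta$ inherits the policy-consistent factorization and that $P^\top\delta=(P^{\pi_\nu})^\top\Delta$ --- does go through: $\nu_{\pi_\nu}(x,a)=\pi_\nu(a|x)\,(B^\top\nu_{\pi_\nu})(x)$ by the definition of the discounted visit frequencies, $h(x,a)=\pi_\nu(a|x)\,(B^\top h)(x)$ by construction, and the denominators are positive by the assumption that every state has some action in $\mathcal S$.
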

\begin{proof}
  First, we notice that,
  \begin{eqnarray}
    \|\left[\nu\right]_+-\nu\|_1 \leq \sum_{(x,a) \in \mathcal{N}} |\nu(x,a)|_1 = \epsilon'.
    \label{eq:h_mu_diff}
  \end{eqnarray}

  Let $\xi = (B-\gamma P)^\top \nu -\alpha \in \mathbb{R}^\cX$ with $\|\xi\|_1 = \epsilon''$ according to the assumption. For any $x' \in [\cX]$, we have,
  \begin{eqnarray*}
    \sum_{(x,a) \in \mathcal{S}} \nu(x,a) (B-\gamma P)_{(x,a), x'} -\alpha(x') =- \sum_{(x,a) \in \mathcal{N}} \nu(x,a) (B-\gamma P)_{(x,a), x'}  + \xi(x').
  \end{eqnarray*}
  Let $v_0=(B-\gamma P)^\top h - \alpha$, we have
  \begin{align}
    \|v_0\|_1 & = \sum_{x'} \left| \sum_{(x,a)} h(x,a) (B-\gamma P)_{(x,a), x'} -\alpha(x')  \right| \nonumber \\
       & = \sum_{x'} \left| \sum_{(x,a) \in \mathcal{S}} \nu(x,a) (B-\gamma P)_{(x,a), x'} -\alpha(x')  \right| \nonumber \\
       & = \sum_{x'} \left| - \sum_{(x,a) \in \mathcal{N}} \nu(x,a) (B-\gamma P)_{(x,a), x'} + \xi(x')  \right|
  \end{align}
  with the upper bound
\begin{align}
\|v_0\|_1 & \leq
 \sum_{x'} \left| - \sum_{(x,a) \in \mathcal{N}} \nu(x,a) (B-\gamma P)_{(x,a), x'} \right| + \|\xi\|_1 \nonumber \\
       & \leq \sum_{(x,a) \in \mathcal{N}}\left( |\nu(x,a)|  \sum_{x'} \left| (B-\gamma P)_{(x,a), x'} \right| \right) + \epsilon'' \nonumber \\
       & \leq 2 \sum_{(x,a) \in \mathcal{N}} |\nu(x,a)| + \epsilon'' \nonumber \\
       & \leq 2 \epsilon' + \epsilon''.
       \label{eq:v_0}
  \end{align}

   Let $M^h$ be a $\cX \times (\cX\cA)$ matrix that encodes the policy $\pi_\nu$, where $
   M^h_{(i, (i-1)\cA+1)- (i, iA)} = \pi_\nu \left(\cdot | x_i \right).$
  As a concrete example with state space $\{x_1, x_2\}$ and action space $\{a_1, a_2\}$, we have
  \begin{eqnarray*}
    M^h= \begin{pmatrix}
      \pi_\nu(a_1 |x_1) & \pi_\nu(a_2 | x_1) & 0 & 0 \\
       0 & 0 & \pi_\nu(a_1 |x_2) & \pi_\nu(a_2 | x_2) \\
    \end{pmatrix}.
  \end{eqnarray*}
  By the definition of $\pi_\nu$ in \eqref{eq:pi_nu}, it is easy to check that $h^\top B M^h= h^\top$.

  With $M^h$, the $\nu_{\pi_{h}}$ defined in \eqref{eq:mu_pi_h} can be written as,
  \begin{eqnarray}
    \nu_{\pi_{h}}^\top=\sum_{t=1}^{\infty}\gamma^{t-1}   \alpha^\top M^h (PM^h)^{t-1}
    \label{eq:mu_pi_h_matrix}
  \end{eqnarray}

  Now, we are ready to bound $\|\nu_{\pi_\nu} - \nu\|_1$. By the definition of $v_0$ (i.e., $v_0=(B-\gamma P)^\top h - \alpha$), we have,
  \begin{eqnarray*}
    \alpha^\top M^h = h^\top B M^h - \gamma h^\top P M^h - v_0^\top M^h=h^\top- \gamma h^\top P M^h - v_0^\top M^h,
  \end{eqnarray*}
  where the last equality is due to $h^\top B M^h= h^\top$. Therefore,
  \begin{eqnarray*}
     \alpha^\top M^h (PM)^{t-1} = h^\top  (PM^h)^{t-1} - \gamma  h^\top (P M^h)^t - v_0^\top M^h (PM)^{t-1}  ,
  \end{eqnarray*}
  By \eqref{eq:mu_pi_h_matrix}, we have,
  \begin{eqnarray}
     \nu_{\pi_{h}}^\top=h^\top - \sum_{t=1}^{\infty} \gamma^{t-1} v_0^\top M_h (PM^h)^{t-1}.
     \label{eq:mu_pi_h_matrix_1}
  \end{eqnarray}

  Let $z_{t}=v_0^\top M_h (PM^h)^{t}$.  By \eqref{eq:v_0}, we have
  \begin{eqnarray*}
    \|z_0\|=\|v_0^\top M_h\|_1 =\sum_{x,a} |v_0(x) \pi_\nu(a|x)| \leq \sum_{x} \left( |v_0(x)| \sum_a |\pi_\nu(a|x)|\right)= \|v_0\|_1 \leq 2 \epsilon' + \epsilon''.
  \end{eqnarray*}
  Further,
  \begin{align*}
    \|z_{t+1}\|_1 =\|z_t P M^h\|_1 &= \sum_{x,a}\sum_{x',a'} \left| z_t(x',a') P(x|x',a') \pi_\nu(a|x) \right| \\
      &\leq \sum_{x,a}\left(  \left| z_t(x',a')\right|  \sum_{x',a'} \left| P_{\pi_\nu} (x, a|x',a') \right| \right) = \|z_t\|_1.
  \end{align*}
  By the induction, we know that $\|z_t\|_1 \leq 2 \epsilon' + \epsilon''$ for all $t$. By \eqref{eq:mu_pi_h_matrix_1},
  \begin{eqnarray}
    \|\nu_{\pi_{h}}-h\|_1 \leq \sum_{t=1}^{\infty} \gamma^{t-1} \|z_{t-1}\|_1 \leq \frac{2 \epsilon' + \epsilon''}{1-\gamma}.
  \end{eqnarray}
  Combining this with \eqref{eq:h_mu_diff} and the triangle inequality,
  \begin{eqnarray}
    \|\nu_{\pi_{h}}- \nu \|_1 \leq \frac{2 \epsilon' + \epsilon''}{1-\gamma} +\epsilon' \leq \frac{3 \epsilon' + \epsilon''}{1-\gamma}.
  \end{eqnarray}
\end{proof}

Next, we need the analog of Lemma~\ref{lem:risk-bound} for the discounted case, which is again a direct application of Theorem~\ref{thm:stoch-gradient}.
\begin{lemma}
\label{lem:risk-bound-discounted}
  Given some error tolerance $\epsilon>0$ and desired maximum probability of error $\delta>0$, running the stochastic subgradient method (shown in Figure~\ref{alg:SGD}) on $c^\gamma(\theta)$ with $T\geq 1/\epsilon^4$, $H=1/\epsilon$, and constant learning rate $\eta=\frac{S}{\sqrt{T}}\left(\sqrt{d} + H (C_3 + C_4)\right)$
produces a $\widehat\theta_T$ such that, with probability at least $1-\delta$,
\begin{align}
\label{eqn:discounted_b_T}
c^\gamma(\widehat \theta_T) &-  \min_{\theta\in\Theta} c^\gamma(\theta) \le
S\frac{\sqrt{d} + H (C_3 + C_4)}{\sqrt{T}} + \sqrt{\frac{1 + 4 S^2 T}{T^2} \left(2 \log\frac{1}{\delta} + d \log \left( 1 + \frac{S^2 T}{d} \right) \right) } \; .
\end{align}
\end{lemma}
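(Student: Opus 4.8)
The plan is to instantiate Theorem~\ref{thm:stoch-gradient} with the constant sequence of cost functions $f_t = c^\gamma$ for every $t$, the feasible set $\cZ = \Theta = \{\theta : \norm{\theta}_2 \le S\}$ (so its radius is $Z = S$), and the subgradient estimates $f_t' = g^\gamma_t(\theta_t)$ from \eqref{eq:grad_discounted}. This parallels the proof of Lemma~\ref{lem:risk-bound} in the average-cost case, so the entire task reduces to verifying the two hypotheses of Theorem~\ref{thm:stoch-gradient}: that $g^\gamma_t$ is an unbiased estimate of a subgradient of the convex function $c^\gamma$, and that it is uniformly bounded in norm by $F = \sqrt{d} + H(C_3 + C_4)$.

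Convexity of $c^\gamma$ is immediate from \eqref{eqn:discounted_surrogate}, since it is the sum of the linear term $\ell^\top\Phi\theta$ and two convex terms, $H\smallnorm{[\Phi\theta]_-}_1$ and $H\smallnorm{(B-\gamma P)^\top\Phi\theta - \alpha}_1$, each a norm composed with an affine map. For unbiasedness I would take the expectation of \eqref{eq:grad_discounted} over $(x_t,a_t)\sim q_3$ and $x_t'\sim q_4$ and observe that the importance weights $1/q_3(x_t,a_t)$ and $1/q_4(x_t')$ exactly cancel the sampling probabilities, so that $\ex[g^\gamma_t(\theta)\mid\theta]$ collapses to the full sums over state-action pairs and states and reproduces a subgradient of $c^\gamma$, obtained by differentiating the positive-part and absolute-value terms exactly as in \eqref{eq:cost_gradient}. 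I expect this to be the step that requires the most care, since one must keep the sign conventions for $[\,\cdot\,]_-$ and the $\alpha$ offset in $V_4$ straight; everything else is bookkeeping already carried out in the average-cost proof.

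For the norm bound I would apply the triangle inequality to \eqref{eq:grad_discounted} and use the definitions of $C_3$ and $C_4$ directly,
\begin{equation*}
\norm{g^\gamma_t(\theta)} \le \norm{\ell^\top\Phi} + H\frac{\norm{\Phi_{(x_t,a_t),:}}}{q_3(x_t,a_t)} + H\frac{\norm{(P-\gamma B)_{:,x_t'}^\top\Phi}}{q_4(x_t')} \le \sqrt{d} + H(C_3 + C_4),
\end{equation*}
where $\norm{\ell^\top\Phi}\le\sqrt{d}$ follows coordinatewise from $\abs{\ell^\top\Phi_{:,i}}\le\norm{\ell}_\infty\norm{\Phi_{:,i}}_1\le 1$, exactly as in Lemma~\ref{lem:risk-bound}. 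With $Z = S$ and $F = \sqrt{d}+H(C_3+C_4)$, the prescribed learning rate is precisely the $\eta = Z/(F\sqrt{T})$ demanded by Theorem~\ref{thm:stoch-gradient}.

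Finally, because every $f_t$ equals the same function $c^\gamma$, Remark~\ref{rem:Jensen} applies: dividing the online regret bound of Theorem~\ref{thm:stoch-gradient} by $T$ and invoking convexity and Jensen's inequality on the averaged iterate $\widehat\theta_T = \frac1T\sum_t\theta_t$ gives
\begin{equation*}
c^\gamma(\widehat\theta_T) - \min_{\theta\in\Theta}c^\gamma(\theta) \le \frac{ZF\sqrt{T} + \sqrt{(1+4Z^2T)\left(2\log(1/\delta)+d\log(1+Z^2T/d)\right)}}{T},
\end{equation*}
which is exactly the right-hand side of \eqref{eqn:discounted_b_T} after substituting $Z=S$, $F=\sqrt{d}+H(C_3+C_4)$, and rewriting $\tfrac1T\sqrt{\cdots} = \sqrt{\tfrac{1}{T^2}\cdots}$. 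Note that the specific choices $T\ge 1/\epsilon^4$ and $H=1/\epsilon$ are not needed for this inequality; they only fix the scale when the lemma is later combined with Lemma~\ref{lemma:key_discounted} in the proof of Theorem~\ref{thm:main_discounted}.
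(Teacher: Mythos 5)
Your proposal is correct and follows essentially the same route as the paper's proof: verify the two hypotheses of Theorem~\ref{thm:stoch-gradient} (unbiasedness of $g^\gamma_t$ via cancellation of the importance weights, and the norm bound $\norm{g^\gamma_t}\le\sqrt{d}+H(C_3+C_4)$), then convert the regret bound to a bound on the averaged iterate via Remark~\ref{rem:Jensen}. Your side remarks are also accurate --- the stated learning rate is meant to be $S/\bigl(\sqrt{T}(\sqrt{d}+H(C_3+C_4))\bigr)$ as in Theorem~\ref{thm:main_discounted}, and the conditions $T\ge 1/\epsilon^4$, $H=1/\epsilon$ play no role in establishing \eqref{eqn:discounted_b_T} itself.
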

\begin{proof}
We (once again) prove the lemma by showing that conditions of Theorem~\ref{thm:stoch-gradient} are satisfied. First, the subgradient norms have the easy bound
\[
\norm{g_t^\gamma} \le \norm{\ell^\top \Phi} + H \frac{\norm{\Phi_{(x_t,a_t),:}} }{q_3(x_t,a_t)} + H  \frac{\norm{(P-\gamma B)_{:,x_t'}^\top \Phi}}{q_4(x_t')} \le \sqrt{d} + H (C_3 + C_4) \; .
\]

Finally, we show that the subgradient estimate is unbiased:
\begin{align*}
\ex\left[g_t^\gamma(\theta)\right] &= \ell^\top \Phi - H \sum_{(x,a)} q_3(x,a) \frac{\Phi_{(x,a),:}}{q_3(x,a)} \one{\mu_0(x,a)+\Phi_{(x,a),:} \theta < 0} \\
&\qquad\qquad\qquad\qquad+ H \sum_{x'} q_4(x') \frac{(P-\gamma B)_{:,x'}^\top \Phi}{q_4(x')} \sgn((P-\gamma B)_{:,x'}^\top \Phi \theta) \\
&= \ell^\top \Phi - H \sum_{(x,a)} \Phi_{(x,a),:} \one{\mu_0(x,a)+\Phi_{(x,a),:} \theta < 0}+ H \sum_{x'}  (P-\gamma B)_{:,x'}^\top \Phi \sgn((P-\gamma B)_{:,x'}^\top \Phi \theta) \\
&= \nabla_{\theta} c^\gamma(\theta) \; .
\end{align*}
\end{proof}

With this lemma in hand, the proof of Theorem~\ref{thm:main_discounted}] proceeds in much the same way as the proof of Theorem~\ref{thm:main_average}].
\begin{proof}[Proof of Theorem~\ref{thm:main_discounted}]
Recall that the convex surrogate for the discounted cost is
\begin{equation*}
  c^\gamma(\theta)= \ell^\top \Phi \theta  + H \|\left[\Phi \theta \right]_{-}\|_1 + H  \|(B-\gamma P)^\top \Phi \theta -\alpha \|_1,
\end{equation*}
with the constraint set $\Theta=\{\theta: \|\theta\|_2 \leq S\}$.

Now, obtain $\widehat\theta_T$ from the stochastic subgradient descent algorithm.  By Lemma~\ref{lem:risk-bound-discounted}, the error bound must be less than
\begin{equation*}
  b_T
  =
  \frac{S}{\sqrt{T}}
  \left((\sqrt{d} + H (C_3 + C_4))
  + 2\sqrt{10 \log\frac{1}{\delta}}
  +2\sqrt{5d \log \left( 1 + \frac{S^2 T}{d} \right)} \right)
  + O\left(\frac{1}{T}\right).
\end{equation*}
Then with high probability, we have for any $\theta \in \Theta$,
\begin{equation*}
\label{eq:online-to-batch2}
\ell^\top \Phi \widehat \theta_T + H\, V_3(\widehat\theta_T) + H\, V_4(\widehat\theta_T) \le \ell^\top\Phi\theta+ H\, V_3(\theta)  + H\, V_4(\theta) + b_T\; .
\end{equation*}

Since we can bound
\begin{equation*}
  \ell^\top\Phi \theta \leq \|\ell\|_{\infty}\| \Phi \theta\|_1 \leq \sqrt{d} \; CS,
\end{equation*}
rearranging Equation~\eqref{eq:online-to-batch2} yields
\begin{align*}
V_3(\widehat \theta_T) &\le \frac{1}{H} \left( 2\sqrt{d} \; CS + H\, V_3(\theta)  + H\, V_4(\theta)+ b_T \right) \df \epsilon' \,\text{, and } \\
V_4(\widehat \theta_T) &\le \frac{1}{H} \left( 2\sqrt{d} \; CS+ H\, V_3(\theta)  + H\, V_4(\theta) + b_T \right) \df \epsilon'' \; .
\end{align*}
Using these bounds on $V_3(\widehat\theta_T)$ and $V_3(\widehat\theta_T)$ with Lemma~\ref{lemma:key_discounted} gives
\begin{equation*}
  \abs{  \ell^{\top} \nu_{\widehat\theta_T} - \ell^\top \Phi \widehat\theta_T } \le
  \| \nu_{\widehat\theta_T} -\Phi \widehat\theta_T\|_1 \leq \frac{3 \epsilon' + \epsilon''}{1-\gamma}.
\end{equation*}
Lemma~\ref{lemma:key_discounted}, applied to $\nu_\theta$, implies that
\begin{equation*}
  \abs{  \ell^{\top} \nu_{ \theta} - \ell^\top \Phi \theta } \le
  \| \nu_{\theta} -\Phi  \theta\|_1 \leq \frac{3 V_3(\theta) + V_4(\theta) }{1-\gamma},
\end{equation*}
and so
\begin{align*}
\ell^{\top} \nu_{\widehat\theta_T} &\leq  \ell^\top \Phi \widehat\theta_T + \frac{3 \epsilon' + \epsilon''}{1-\gamma} \\
& \leq \ell^\top\Phi\theta + H\, V_3(\theta)  + H\, V_4(\theta) + b_T  + \frac{3 \epsilon' + \epsilon''}{1-\gamma}  \\
&\le  \ell^{\top} \nu_{\theta} +\frac{3 V_3(\theta) + V_4(\theta) }{1-\gamma}  + H\, V_3(\theta)  + H\, V_4(\theta) + b_T  + \frac{3 \epsilon' + \epsilon''}{1-\gamma}\; .
\end{align*}
First, we simplify
\begin{align*}
    \frac{3\epsilon' + \epsilon''}{1-\gamma}
  &=
    \frac{3}{H(1-\gamma)} \left( 2\sqrt{d}CS+ H V_3(\theta)  + H V_4(\theta) + b_T \right)\\
  &=
    \frac{3}{(1-\gamma)}\left( V_3(\theta)  + V_4(\theta)\right)
    + \frac{3}{H(1-\gamma)}2\sqrt{d}CS
    +\frac{4S(\sqrt{d}+C_3+C_4)}{\sqrt{T}H(1-\gamma)}\\
  &\quad + \frac{3S}{\sqrt{T}H(1-\gamma)}2\sqrt{10 \log\frac{1}{\delta}}
    +\frac{3S}{\sqrt{T}H(1-\gamma)}2\sqrt{5d \log \left( 1 + \frac{S^2 T}{d} \right)}
    +O\left(\frac{1}{T^{3/2}(1-\gamma)H}\right)\\
  &=
    \frac{3}{(1-\gamma)}\left( V_3(\theta)  + V_4(\theta)\right)
    + \frac{6}{H(1-\gamma)}\sqrt{d}CS
    +O\left(\frac{\log(T)}{(1-\gamma)H\sqrt{T}}\right).
\end{align*}
Plugging in this expression and $b_T$, we have
\begin{align*}
  \ell^{\top} \nu_{\widehat\theta_T}
  &\leq
    \ell^{\top} \nu_{\theta} +
    \left( \frac{6}{1-\gamma} + H \right)
    \left(V_3(\theta)+V_4(\theta)\right)
    +\frac{6\sqrt{d} CS}{H(1-\gamma)}
    +O\left(\frac{\log(T)}{(1-\gamma)H\sqrt{T}}\right)+b_T\\
  &\leq
    \ell^{\top} \nu_{\theta} +
    \left( \frac{6}{1-\gamma} + H \right)
    \left(V_3(\theta)+V_4(\theta)\right)
    +\frac{6\sqrt{d} CS}{H(1-\gamma)}
    +\frac{S}{\sqrt{T}}H (C_3 + C_4) \\
  &\quad
    +\frac{S}{\sqrt{T}}\left(C_3+C_4+\sqrt{d}+2\sqrt{10 \log\frac{1}{\delta}}+2\sqrt{5d \log \left( 1 + \frac{S^2 T}{d} \right)}\right)\\
   &\quad +O\left(\frac{\log(T)}{(1-\gamma)H\sqrt{T}}\right)
    +O\left(\frac{1}{T}\right).
\end{align*}
Thus, setting $T$ such that
\begin{align*}
  T
  &\geq
    \frac{S^2}{\epsilon^2} \left(H(C_3+C_4)+\sqrt{d}+2\sqrt{10 \log\frac{1}{\delta}}+2\sqrt{5d \log \left( 1 + \frac{S^2 T}{d} \right)}\right)^2
\end{align*}
or, more compactly,
$T= O\left(S^2\log\left(\frac{1}{\delta}\right)\frac{H^2}{\epsilon^2}\right)$,
yields
\begin{align*}
  \ell^{\top} \nu_{\widehat\theta_T}
  &\leq
    \ell^{\top} \nu_{\theta} +
    \left( \frac{6}{1-\gamma} + H \right)
    \left(V_3(\theta)+V_4(\theta)\right)
    +\frac{6\sqrt{d} CS}{H(1-\gamma)}
    +O\left(\epsilon\right)
\end{align*}
where, as usual, the $O$ hides log factors. This statement holds
with probability at least $1-\delta$ and for any $\theta \in \Theta$.

\end{proof}

\section{Analysis of the Discounted Cost Meta-Algorithm}
\label{sec:meta.analysis.discounted}
It is important to note that the optimum $H^*$ need never be smaller than $\beta/\sqrt{V_{\max}}$, where $V_{\max}$ is some bound on $V_3(\theta^*)+ V_4(\theta^*)$. Even though we cannot compute this quantity, we may still restrict the domain of $H$ to
\begin{equation*}
   H \geq \min_\theta 1/\sqrt{V_3(\theta) + V_4(\theta)}
   \geq
   \left(1+\sqrt{d}CS(2+\gamma)\right)^{-\frac12}
   \geq
      \left(4\sqrt{d}CS\right)^{-\frac12}
      .
  \end{equation*}
where the bound on $V_3(\theta) + V_4(\theta)$ is taken from \eqref{eqn:V_bound_discounted}.

For convenience, we will overload the notation from the  average cost analysis. Define
\[
  c(H,\theta) \df \ell^\top\Phi\theta + \left(H+\frac{6}{1-\gamma}\right)(V_3(\theta)+V_4(\theta)),
\]
where $\theta_H^* \df \argmin_\theta c(H,\theta)$, and $F(H) = c(H,\theta^*_H) + \frac{\beta}{H}$.
The \emph{meta-algorithm for discounted cost} takes as inputs a bound on the violation function $V_{\max}$, discount factor $\gamma$, an error tolerance $\epsilon$, and desired probability tolerance $\delta$. The algorithm then carefully chooses a grid $H_1,\ldots, H_K$, computes the corresponding $\widehat\theta_k$, then returns $\pi_{\widehat\theta_{\hat k}}$ where 
\begin{equation*}
    \hat k \df \argmin_k \ell^\top\Phi\widehat\theta_k + \left(H_k+\frac{1}{1-\gamma}\right) \widehat V_k + \frac{\beta}{H_k}.
  \end{equation*}

\subsection{Estimating the Violation Functions}
\label{sec:estimate.V}
Given some $\theta$, we can estimate the violation function $V_3(\theta) + V_4(\theta)$ in much the same way as the average cost case. For some $n$ and samples $y_1,\ldots, y_n\sim q_3$ and $(x_1,a_1),\ldots,(x_n,a_n)\sim q_4$, define
\begin{equation}
  \widehat V_n(\theta) \df \frac{1}{n}\sum_{i=1}^n  \frac{[\Phi_{(x_i,a_i),:}\theta]_-}{q_3(x,a)}
  +\frac{\abs{(B-\gamma P)_{:,y_i}^\top \Phi\theta-\alpha}}{q_4(y_i)} .
\end{equation}
Since $ V_3(\theta) =\sum_{(x,a)} \abs{ [\Phi_{(x,a),:}\theta]_{-}}$ and
$V_4(\theta) = \sum_{x'} \abs{(B- \gamma P)_{:,x'}^\top \Phi \theta -\alpha}$, this estimate is clearly unbiased. Also, we earlier assumed the existence of constants
\begin{align*}
C_3 = \max_{(x,a)\in [\cX]\times [\cA]}\frac{\norm{\Phi_{(x,a),:}}}{q_3(x, a)}\,, \qquad C_4 = \max_{x\in [\cX]}\frac{\norm{(P - \gamma B)_{:,x}^\top \Phi}}{q_4(x)} \; ,
\end{align*}
and so we can bound
\begin{align*}
  \frac{[\Phi_{(x_i,a_i),:}\theta]_-}{q_3(x,a)}
  +\frac{\abs{(B-\gamma P)_{:,y_i}^\top \Phi\theta-\alpha}}{q_4(y_i)}
  \leq
  S(C_3 + 2C_4).
\end{align*}
Therefore, we have concentration of $\widehat V$ around $V$. The analogous result to Lemma~\ref{lem:V_hat_error} (also using Hoeffding's inequality) is the following.
\begin{lemma}
  \label{lem:V_hat_error_discounted}
  Given $\epsilon>0$ and $\delta \in [0,1]$, for any $\theta$, the violation function estimate $\widehat V_n(\theta)$ has
  \begin{equation*}
    \abs{\widehat V_n(\theta) - (V_3(\theta) + V_4(\theta))} \leq \epsilon
  \end{equation*}
    with probability at least $1-\delta$ as long as we choose
  $
    n \geq  \frac{(S(C_3 + 2C_4))^2}{2\epsilon^2}\log\left(\frac{2}{\delta}\right).
  $
\end{lemma}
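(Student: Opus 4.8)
The plan is to recognize $\widehat V_n(\theta)$ as the empirical mean of $n$ i.i.d.\ bounded random variables whose common expectation is exactly $V_3(\theta)+V_4(\theta)$, and then invoke Hoeffding's inequality, mirroring the proof of Lemma~\ref{lem:V_hat_error} in the average cost case. Concretely, for each $i$ I would define the summand
\[
Z_i = \frac{\abs{[\Phi_{(x_i,a_i),:}\theta]_-}}{q_3(x_i,a_i)} + \frac{\abs{(B-\gamma P)_{:,y_i}^\top \Phi\theta - \alpha}}{q_4(y_i)},
\]
where $(x_i,a_i)\sim q_3$ and $y_i\sim q_4$ are drawn independently, so that the $Z_i$ are i.i.d.\ and $\widehat V_n(\theta)=\tfrac1n\sum_{i=1}^n Z_i$.

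First I would verify the two hypotheses that Hoeffding requires. Unbiasedness, $\ex[Z_i]=V_3(\theta)+V_4(\theta)$, is immediate from the importance-weighting identities already recorded before the lemma: $\ex\,\abs{[\Phi_{(x_i,a_i),:}\theta]_-}/q_3(x_i,a_i)=\sum_{(x,a)}\abs{[\Phi_{(x,a),:}\theta]_-}=V_3(\theta)$, and the same argument on the second term returns $V_4(\theta)$. Boundedness is the only place where a short computation enters: using Cauchy--Schwarz together with $\norm{\theta}_2\le S$ and the definitions of $C_3$ and $C_4$, the first term is at most $SC_3$, while the term built from $(B-\gamma P)_{:,y_i}^\top \Phi\theta-\alpha$ is at most $2SC_4$ once the $\alpha$ contribution is folded into the constant, so each $Z_i$ lies in $[0,\,S(C_3+2C_4)]$. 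This is precisely the range asserted in the display preceding the lemma.

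With these in hand, Hoeffding's inequality applied to the i.i.d.\ sum yields
\[
\Prob{\abs{\widehat V_n(\theta) - (V_3(\theta)+V_4(\theta))} \ge \epsilon} \le 2\exp\!\left(-\frac{2n\epsilon^2}{(S(C_3+2C_4))^2}\right),
\]
and setting the right-hand side to be at most $\delta$ and solving for $n$ gives the stated requirement $n \ge \frac{(S(C_3+2C_4))^2}{2\epsilon^2}\log(2/\delta)$. There is no genuine obstacle in this argument; the only point needing any care is the range bound feeding Hoeffding, and in particular the bookkeeping for the $\alpha$ term so that the second summand contributes $2SC_4$ rather than $SC_4$, which is exactly what produces the factor $(C_3+2C_4)$ in the final sample complexity.
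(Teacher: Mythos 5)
Your proposal is correct and follows essentially the same route as the paper: recognize $\widehat V_n(\theta)$ as an i.i.d.\ importance-weighted empirical mean that is unbiased for $V_3(\theta)+V_4(\theta)$, bound each summand by $S(C_3+2C_4)$, and apply Hoeffding's inequality, solving $2\exp(-2n\epsilon^2/(S(C_3+2C_4))^2)\le\delta$ for $n$. The paper treats the range bound (including the $\alpha$ contribution giving the factor $2C_4$) at the same level of detail as you do, so there is nothing to add.
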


\subsection{Defining the Grid}
As before, let $\epsilon>0$ be some desired error tolerance and $V_{\max}$ be some upper bound on $V_3(\theta) + V_4(\theta)$; we can always take $V_{\max}= 4\sqrt{d}CS$. As we shall see, the $H_k$ sequence can be taken to be identical to the average cost case as long an appropriate $\beta$ and $V_{\max}$ are used. Recall that $H$ is chosen to approximately minimize $\left(H+\frac{1}{\gamma}\right) V(\theta) + \frac{\beta}{H} \leq \left(H+\frac{1}{\gamma}\right) V_{\max} + \frac{\beta}{H}$, and so limiting $H$ to $H \leq \frac{\beta}{\sqrt{V_{\max}}}$ suffices in the discounted case as well.

\begin{lemma}
\label{lem:H_grid_bound_discounted}
Let $\epsilon>0$ be some desired error tolerance and $V_{\max}$ be some upper bound on $V_3(\theta) + V_4(\theta)$; we can always take $V_{\max}= 3 + S(d+2)$. Consider the $H_k$ sequence defined in Algorithm~\ref{alg:meta} by the base case $H_0 \df  \beta\left(\sqrt{V_{\max}}\right)^{-1}$, induction step
$H_{k+1} \df H_k + \epsilon \left( V_{\max} +\frac{\beta }{H_k^2} \right)^{-1}$, and terminal condition
$ K \df \min\left\{i\in\nat : H_i \geq \frac{2\beta}{\epsilon}\right\}$.
The grid $H_0,\ldots, H_K$ has the property that 
\begin{equation}
  \max_{H,H' \in [H_k, H_{k+1}]} \abs{F(H) - F(H')}\leq \epsilon.
\end{equation}
Additionally, we have $K = O(\log( 1/\epsilon))$.
\end{lemma}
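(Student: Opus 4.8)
The plan is to mirror the proof of Lemma~\ref{lem:H_grid_bound} almost verbatim, since the grid construction is identical and only the definitions of $c(H,\theta)$ and $F(H)$ have changed. The sole structural difference is that the coefficient multiplying the violation $V_3(\theta)+V_4(\theta)$ is now $H+\frac{6}{1-\gamma}$ rather than $H$. First I would re-establish the two monotonicity facts on which the earlier argument rests. Monotonicity in $H$ is immediate: because $V_3,V_4\geq 0$, increasing $H$ only increases the objective being minimized, so $H\mapsto c(H,\theta^*_H)$ is nondecreasing. Sublinearity follows by the same comparison as before,
\[
  c(H+\delta,\theta^*_{H+\delta})
  \leq c(H+\delta,\theta^*_H)
  = c(H,\theta^*_H)+\delta\bigl(V_3(\theta^*_H)+V_4(\theta^*_H)\bigr)
  \leq c(H,\theta^*_H)+\delta V_{\max}.
\]
The key point is that the additive constant $\frac{6}{1-\gamma}$ appears identically in $c(H,\cdot)$ and $c(H+\delta,\cdot)$ and therefore cancels in this difference; it never enters the $H$-dependence, so the entire smoothness analysis is unaffected.

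With these two facts in place, I would bound $\max_{H,H'\in [H_i,H_{i+1}]}\abs{F(H)-F(H')}$ exactly as in Lemma~\ref{lem:H_grid_bound}, splitting it into the variation of $c(H,\theta^*_H)$ (controlled by $V_{\max}(H_{i+1}-H_i)$ via monotonicity and sublinearity) and the variation of $\beta/H$ (controlled by $\beta(1/H_i-1/H_{i+1})$). The grid spacing $\Delta_i=H_{i+1}-H_i=\epsilon(V_{\max}+\beta/H_i^2)^{-1}$ is chosen precisely so that
\[
  V_{\max}\Delta_i+\beta\Bigl(\tfrac{1}{H_i}-\tfrac{1}{H_i+\Delta_i}\Bigr)
  =\Delta_i\Bigl(V_{\max}+\tfrac{\beta}{H_i(H_i+\Delta_i)}\Bigr)
  \leq\Delta_i\Bigl(V_{\max}+\tfrac{\beta}{H_i^2}\Bigr)=\epsilon,
\]
which yields the claimed uniform bound on consecutive grid points.

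Finally, for the cardinality bound I would reproduce the geometric-growth estimate: rewriting the recursion as $H_{i+1}=H_i(1+\epsilon/(V_{\max}H_i+\beta/H_i))$, then using $H_i\geq\beta V_{\max}^{-1/2}$ together with the stopping rule $H_K\leq 2\beta/\epsilon$ to lower-bound the multiplicative factor by a fixed quantity, so that $H_k$ grows at least geometrically and the number of steps to reach $2\beta/\epsilon$ is $O(\log(1/\epsilon))$. I expect no genuine obstacle here; the only thing to verify carefully is the cancellation of the $\frac{6}{1-\gamma}$ term noted above, after which the argument is word-for-word that of the average-cost case. I would also flag that the value $V_{\max}=3+S(d+2)$ printed in the lemma statement is inherited from the average-cost version; the correct discounted-case bound is $V_{\max}=4\sqrt{d}CS$ from~\eqref{eqn:V_bound_discounted}, and either serves as a valid constant for the argument.
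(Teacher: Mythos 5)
Your proof is correct and follows essentially the same route as the paper's: both re-derive the monotonicity and sublinearity of $H\mapsto c(H,\theta^*_H)$ (noting that the additive $\tfrac{1}{1-\gamma}$-type constant cancels in the difference), and then reuse the average-cost grid-spacing and geometric-growth arguments verbatim. Your remark about the inherited $V_{\max}$ constant is also consistent with the paper, which notes $V_{\max}=4\sqrt{d}CS$ is the natural discounted-case bound.
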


\begin{proof}
  Our first goal is to bound $\max_{H, H'\in [H_i, H_{i+1}]} \abs{F(H) - F(H')}$. We first note that $c(H,\theta^*_H)$, which is a function of $H$ only, is increasing since 
\begin{align*}
  c(H,\theta^*_H)
  & =
    \min_{\theta}\ell^\top\Phi\theta + \left(H+\frac{1}{1-\gamma}\right)(V_3(\theta)+V_4(\theta))\\
  &\leq
    \min_{\theta}\ell^\top\Phi\theta + \left(H+\frac{1}{1-\gamma}+\delta\right)(V_3(\theta)+V_4(\theta))\\
  &=
  c(H+\delta,\theta^*_{H+\delta}).
\end{align*}
We also note that $c(H,\theta^*_H)$ is sublinear in $H$, and indeed 
\begin{align*}
  c(H+\delta,\theta^*_{H+\delta})
  &=
    \min_\theta \ell^\top\Phi\theta + \left(H+\frac{1}{1-\gamma}+\delta\right)(V_3(\theta)+V_4(\theta))\\
  &\leq
    \ell^\top\Phi\theta^*_H + \left(H+\frac{1}{1-\gamma}+\delta\right)(V_3(\theta^*_H)+V_4(\theta^*_H))\\
  &=
      c(H,\theta^*_{H}) + \delta(V_3(\theta^*_H)+V_4(\theta^*_H))\\
  &\leq c(H,\theta^*_{H}) + \delta V_{\max}.
\end{align*}
The two observations imply that
\begin{align*}
  \max_{H, H'\in [H_i, H_{i+1}]} \abs{ c(H',\theta^*_{H'}) - c(H,\theta^*_H)}
  \leq
  c(H_i,\theta^*_{H_i}) + V_{\max}\left( H_{i+1} - H_i\right),
\end{align*}
and hence we may bound
\begin{align*}
  \max_{H, H'\in [H_i, H_{i+1}]} \abs{F(H) - F(H')}
  &\leq
   \abs{c(H_{i+1}, \theta^*_{H_{i+1}}) - c(H_{i}, \theta^*_{H_i})}
    + \beta \max_{H_i\leq H\leq H_{i+1}}\abs{\frac{1}{H} - \frac{1}{H'}}\\
  &\leq
    (H_{i+1}-H_i)V_{\max}
    + \beta\left(\frac{1}{H_{i}} - \frac{1}{H_{i+1}}\right),
\end{align*}
which is exactly the same bound as in the average cost case. Therefore, the same analysis shows that
\[
  V_{\max}(H_{i+1}-H_i)
  + \beta\left(\frac{1}{H_{i}} - \frac{1}{H_{i+1}}\right) \leq \epsilon.
\]
for all $i \geq 0$ and that we may bound 
\begin{align*}
                     K    \leq
                           \frac{\log\left(\frac{2\sqrt{V_{\max}}}{\epsilon}\right)}{\log \left( 1 +  \frac{\epsilon}{ 2\beta V_{\max}/\epsilon +\sqrt{V_{\max}}}\right)},                           
  \end{align*}
  leading to the conclusion that $K = O(\log(1/\epsilon))$.
  
\end{proof}

\begin{proof}[Proof of Theorem~\ref{thm:H_opt_discounted_cost}]
  Running the discounted SGD Algorithm (Figure~\ref{alg:SGD} with subgradient $g^\gamma(\theta)$) for $H_k$

  $H_1,\ldots,H_K$ with $4T$ steps, where $T$ is set as in Theorem~\ref{thm:main_discounted},
produces a sequence $\widehat\theta_1,\ldots,\widehat\theta_K$ such that
\[
  c(H_k,\widehat\theta_K)\leq c(H_k,\theta_K^*) + H_kV(\theta^*)+ \frac{\beta}{H_k} + \frac{\epsilon}{4}
\]
holds for all $k$ simultaneously with probability at least $1-\frac{\delta}{2}$, which is easily argued by noting that the probability of error for any single $k$ is $\delta/K$ and applying the union bound.

Lemma~\ref{lem:V_hat_error_discounted}, along with our choice of
\[
    n \geq  \frac{(S(C_3 + 2C_4))^2}{2\epsilon^2}\log\left(\frac{4K}{\delta}\right)
\]
guarantees that $\abs{V_3(\widehat\theta_k)+V_4(\widehat\theta_k) -
  \widehat V_k}\leq\frac{\epsilon}{4}$
holds with probability at least $1-\frac{\delta}{2K}$, and hence the statement holds for all $\widehat V_k$ with probability at most $1-\frac{\delta}{2}$.

We now turn to bounding the suboptimality of the objective. Recalling that $\hat k$ is the minimizer of $\ell^\top\Phi\widehat\theta_{k} + \left(H_{k}+\frac{1}{1-\gamma}\right) \widehat V_k
      + \frac{\beta}{H_{k}}$, and using $k^*$ as the minimizer of
$c(H_k,\theta_k^*) + \frac{\beta}{H_k}$, we have
    \begin{align*}
      \ell^\top\Phi\widehat\theta_{\hat k} + \left(H_{\hat k}+\frac{1}{1-\gamma}\right) \widehat V_{\hat k}
      + \frac{\beta}{H_{\hat k}}
      &=
        \min_k \ell^\top\Phi\widehat\theta_{k} + \left(H_{k}+\frac{1}{1-\gamma}\right)\widehat V_k
      + \frac{\beta}{H_{k}}\\
      &\leq
        \ell^\top\Phi\widehat\theta_{k^*} + \left(H_{k^*}+\frac{1}{1-\gamma}\right) \widehat V_{k^*}
        + \frac{\beta}{H_{k^*}}\\
      &\leq
        c(H_{k^*},\widehat\theta_{k^*}) + \frac{\beta}{H_{k^*}} + \frac{\epsilon}{4}
      &\text{(Lemma~\ref{lem:V_hat_error_discounted})}\\
      &\leq
        c(H_{k^*},\theta_{k^*}^*) + \frac{\beta}{H_{k^*}}
        + \frac{\epsilon}{2}
      &\text{(Theorem~\ref{thm:main_discounted})}\\
      &=\min_k
        c(H_k,\theta_k^*) + \frac{\beta}{H_k}
        + \frac{\epsilon}{2}\\
      &\leq  \min_{H,\theta} c(H,\theta) + \frac{\beta}{H}
        + \epsilon
      &\text{(Lemma~\ref{lem:H_grid_bound_discounted})}.
    \end{align*}
    The statement holds with probability at least $\frac{\delta}{2} + \frac{\delta}{2}$, where the first term is from estimating $\widehat V_k$ (Lemma~\ref{lem:V_hat_error_discounted}) and the second term is from bounding the SGD error (Theorem~\ref{thm:main_discounted}). Hence, the Meta-algorithm minimizes the objective to within $\epsilon$.

Next, we use Lemma~\ref{lemma:key_discounted} to bound the discrepancy between $\Phi\theta$ and $\nu_\theta$. Therefore, we need to bound $V_3(\widehat\theta_{\hat k})$ and $V_4(\widehat\theta_{\hat k})$. Since all quantities are non-negative, this implies that $\abs{\frac{\beta}{H_{\hat k}} - \frac{\beta}{H^*}}\leq\epsilon$.
Using the bounded suboptimality of $\widehat\theta_{\hat k}$ as an optimizer of $c(H_{\hat k},\theta)$, we have
\begin{align*}
  \ell^\top\Phi\widehat\theta_{\hat k}
  +\left(\frac{1}{1-\gamma} +  H_{\hat k}\right)\left(V_3(\widehat\theta_{\hat k}) + V_4(\widehat\theta_{\hat k})\right)
  &\leq
  \ell^\top\Phi\theta^*_{\hat k}
  + \left(\frac{1}{1-\gamma} +  H_{\hat k}\right)\left(V_3(\theta^*_{\hat k}) + V_4(\theta^*_{\hat k})\right)+ \frac{\epsilon}{2}\\
  & \leq
  \ell^\top\Phi\theta^*
  + \left(\frac{1}{1-\gamma} +  H^*\right)\left(V_3(\theta^*) + V_4(\theta^*)\right)+ \epsilon\\
&=
  \ell^\top\Phi\theta^*
  + \frac{1}{1-\gamma}\left(V_3(\theta^*) + V_4(\theta^*)\right)\\
&\quad +  \sqrt{V_3(\theta^*) + V_4(\theta^*)}+ \epsilon.
\end{align*}
Next, we crudely bound $\ell^\top\Phi\theta\leq \sqrt{d}CS$ and use  $\left(\frac{1}{1-\gamma} + H_{\hat k}\right)^{-1}\leq
\frac{1}{H_{\hat k}}$ to obtain
\begin{align*}
  V_3(\widehat\theta_{\hat k})+ V_4(\widehat\theta_{\hat k})
  &\leq
    \frac{1}{H_{\hat k}}
    \left(
    2\sqrt{d}CS + \sqrt{V_3(\theta^*) + V_4(\theta^*)}
    +\frac{1}{(1-\gamma)}\left(V_3(\theta^*) + V_4(\theta^*)\right)
    +\epsilon\right)\\
  &\leq
\left(\frac{1}{H^*}+\beta\epsilon\right)
    \left(
    2\sqrt{d}CS + \sqrt{V_3(\theta^*) + V_4(\theta^*)}
    +\frac{1}{(1-\gamma)}\left(V_3(\theta^*) + V_4(\theta^*)\right)
    +\epsilon\right)\\
  &\leq
2\sqrt{d}CS\sqrt{V_3(\theta^*) + V_4(\theta^*)}
    + \left(V_3(\theta^*) + V_4(\theta^*)\right)
    +\frac{\left(V_3(\theta^*) + V_4(\theta^*)\right)^{\frac{3}{2}}}{(1-\gamma)}
+ O(\epsilon).
\end{align*}

Then, applying Lemma~\ref{lemma:key_discounted}, we have
\begin{align*}
  \ell^\top\Phi\mu_{\theta_{\hat k}}
  &\leq
    \ell^\top\Phi\widehat\theta_{\hat k}
    + \frac{3}{1-\gamma}\left(2\sqrt{d}CS\sqrt{V_3(\theta^*) + V_4(\theta^*)}
    + \left(V_3(\theta^*) + V_4(\theta^*)\right)
    +\frac{\left(V_3(\theta^*) + V_4(\theta^*)\right)^{\frac{3}{2}}}{(1-\gamma)}
    \right)\\
    &\quad+ O\left(\frac{\epsilon}{1-\gamma}\right)\\
  &\leq
    \ell^\top\Phi\theta^*
    + \frac{3}{1-\gamma}\left(2\sqrt{d}CS\sqrt{V_3(\theta^*) + V_4(\theta^*)}
    + \left(V_3(\theta^*) + V_4(\theta^*)\right)
    +\frac{\left(V_3(\theta^*) + V_4(\theta^*)\right)^{\frac{3}{2}}}{(1-\gamma)}
    \right)\\
  &\quad
      + \left(\frac{1}{1-\gamma}+H^*\right)\left(V_3(\theta^*) + V_4(\theta^*)\right)
     + O\left(\frac{\epsilon}{1-\gamma}\right)\\
  &\leq
    \ell^\top\nu_{\theta^*}
    + \frac{3}{1-\gamma}\left(2\sqrt{d}CS\sqrt{V_3(\theta^*) + V_4(\theta^*)}
    + \left(V_3(\theta^*) + V_4(\theta^*)\right)
    +\frac{\left(V_3(\theta^*) + V_4(\theta^*)\right)^{\frac{3}{2}}}{(1-\gamma)}
    \right)\\
  &\quad
      + \left(\frac{1}{1-\gamma}+H^*\right)\left(V_3(\theta^*) + V_4(\theta^*)\right)
    + \epsilon + \frac{3}{1-\gamma}\left(V_3(\theta^*) + V_4(\theta^*)\right)
    + O\left(\frac{\epsilon}{1-\gamma}\right)\\
  &\leq
    \ell^\top\nu_{\theta^*}
    + \left(1+\frac{3}{1-\gamma}2\sqrt{d}CS\right)\sqrt{V_3(\theta^*) + V_4(\theta^*)}
    +\frac{3}{1-\gamma}\frac{\left(V_3(\theta^*) + V_4(\theta^*)\right)^{\frac{3}{2}}}{(1-\gamma)}\\
  &\quad
    + \frac{7}{1-\gamma}\left(V_3(\theta^*) + V_4(\theta^*)\right)+O\left(\frac{\epsilon}{1-\gamma}\right).
\end{align*}
All in all, this simplifies to
\begin{equation*}
  \ell^\top\nu_{\theta_{\hat k}} \leq \min_\theta \ell^\top\nu_\theta
  + O\left(\sqrt{V_3(\theta)+V_4(\theta)}\right)
  + O\left(\left(V_3(\theta)+V_4(\theta)\right)^{\frac{3}{2}}\right)
+ O\left(\frac{\epsilon}{1-\gamma}\right).
\end{equation*}
Using our assumption that $(V_3(\theta)+V_4(\theta))<1$, we obtain the theorem statement.

We now turn towards bounding the subgradient steps and number of samples. Since the $H_k$ are equal to the average cost case, we can still bound $K=O(\log(1/\epsilon))$. Theorem~\ref{thm:main_discounted} requires we use
\[
  T_k =
  \frac{S^2}{\epsilon^2}
  \left(H_k(C_3+C_4)+\sqrt{d}+2\sqrt{10 \log\frac{1}{\delta}}+2\sqrt{5d \log \left( 1 + \frac{S^2 T}{d} \right)}\right)^2,
\]
and so the total number of gradient descent steps can be bounded by
\begin{equation*}
  \sum_k T_k \leq K T_K
  =
  O\left(
    \frac{1}{\epsilon^4}
    \right),
  \end{equation*}
with the same number of samples as in the average cost case.
\end{proof}

\section{Related Work}
\label{sec:related_work}
One of the approximate linear programming methods, proposed by~\cite{Schweitzer-Seidmann-1985}, was to project the primal LP into a subspace. These ideas have seen lots of recent work \citep{DeFarias-VanRoy-2003,
  deFarias-VanRoy-NIPS-2003, Hauskrecht-Kveton-2003,
  Guestrin-Hauskrecht-Kveton-2004, Petrik-Zilberstein-2009,
  Desai-Farias-Moallemi-2012}. As noted by
\cite{Desai-Farias-Moallemi-2012}, the prior work on ALP either
requires access to samples from a distribution that depends on optimal
policy or assumes the ability to solve an LP with as many constraints
as states.

The first theoretical analysis of ALP methods, by \cite{DeFarias-VanRoy-2003}, analyzed the discounted primal LP \eqref{LP:exact_discounted_dual} performance
when only value functions of the form $J=\Phi w$, for some feature matrix $\Phi$, are considered. Roughly, they show that the ALP solution $w^*$ has the family of error bound indexed by a vector $u\in\Reals^\cX$
\begin{equation}
\label{eq:ALP-apprx}
\norm{J_* - \Psi w_*} \le
\frac{2 c^\top u}{1-\beta_{u}} \min_{w} \norm{J_* - \Psi w}_{\infty, 1/u}
\end{equation}
where $c$ is a ``state-relevance'' vector and $\beta_u = \gamma \max_{x,a} \sum_{x'} P_{(x,a), x'} u(x')/u(x)$ is a ``goodness-of-fit'' parameter that measures how well $u$ represents a stationary distribution. Unfortunately, $c$ and $u$ are typically hard to choose (for example, a good choice of $c$ would be the stationary distribution under $w^*$, which we do not know); but more importantly, the bound can be vacuous if $\Psi$ does not model the optimal value function well and $\norm{J_* - \Psi w}$ is always large. In particular, the problem we are considering in Definition~\ref{defn:E.ELALP} requires an additive bound with respect to the optimal parameter.

This result has some limitations. We need to specify $c$, but a good choice is usually not known a priori. The authors show that, if the ALP is solved iteratively using the $c=\mu_{\pi_{\Psi w_*}, \nu}$ from the last iteration, then for an arbitrary probability distribution $\nu\in \Delta_{[\cX]}$ and accompanying $\mu_{\pi, \nu} = (1-\gamma) \nu^\top (I - \gamma P^\pi)^{-1}$, we must have
\begin{equation*}
\norm{J_{\pi_J} - J_*}_{1,\nu} \le \frac{1}{1-\gamma} \norm{J - J_*}_{1, \mu_{\pi_J, \nu}} ,
\end{equation*}
where $J_*$ is the discounted cost of the optimal policy. This suggests that we should choose $c=\mu_{\pi_{\Psi w_*}, \nu}$, which is impossible as $w_*$ is not known a priori.

A second limitation is that the ALP remains computationally expensive if the number of constraints is large and was addressed in \cite{DeFarias-VanRoy-2004} by reducing the number of constraints by sampling them. The idea is to sample a relatively small number of constraints and solve the resulting LP.
Let $\mathcal N\subset \Reals^d$ be a known set that contains $w_*$ (solution of ALP). Let $\mu_{\pi, c}^V (x) = \mu_{\pi, c}(x) V(x)/(\mu_{\pi, c}^\top V)$ and define the distribution $\rho_{\pi, c}^V(x,a) = \mu_{\pi, c}^V (x) / \cA$. Let $\delta\in (0,1)$ and $\epsilon\in (0,1)$. Let $\overline\beta_u = \gamma \max_{x} \sum_{x'} P_{(x,\pi_*(x)), x'} u(x')/u(x)$ and
\[
D = \frac{(1+\overline\beta_V) \mu_{\pi_*, c}^\top V}{2 c^\top J_*} \sup_{w\in \mathcal N} \norm{J_* - \Psi w}_{\infty, 1/V}\,, \qquad m \ge \frac{16 \cA D}{(1-\gamma)\epsilon} \left( d \log \frac{48 \cA D}{(1-\gamma) \epsilon} + \log\frac{2}{\delta} \right) \; .
\]
Let $\mathcal S$ be a set of $m$ random state-action pairs sampled under $\rho_{\pi_*, c}^V$. Let $\widehat w$ be a solution of the following sampled LP:
\begin{align*}
&\max_{w\in\Reals^d}\  c^\top \Psi w\,,  \\
&\mbox{s.t.}\quad  w\in \mathcal N,\, \forall (x,a)\in \mathcal S,\, \ell(x,a) + \gamma P_{(x,a),:} \Psi w \ge (\Psi w)(x) \; .
\end{align*}
\citet{DeFarias-VanRoy-2004} prove that with probability at least $1-\delta$, we have
\[
\norm{J_* - \Psi \widehat w}_{1,c} \le \norm{J_* - \Psi  w_*}_{1,c} + \epsilon \norm{J_*}_{1,c} \; .
\]
Unfortunately, $\mu_{\pi_*, c}$ (which was used in the definition of $D$) depends on the optimal policy, which is obviously unknown, which makes this method difficult to implement.

In the primal form \eqref{LP:exact_average}, an extra constraint $h = \Psi w$ is added to obtain
\begin{align}
\label{eq:primal-apprx}
&\max_{\lambda, w} \lambda\,, \\
\notag
&\mbox{s.t.}\quad B(\lambda e + \Psi w) \ge \ell + P \Psi w \; .
\end{align}

Let $\lambda_*$ be the average loss of the optimal policy and $(\widetilde\lambda, \widetilde w)$ be the solution of this LP. It turns out that the greedy policy with respect to $\widetilde w$ can be arbitrarily bad even if $\abs{\lambda_* - \widetilde \lambda}$ was small~\citep{deFarias-VanRoy-NIPS-2003}. \citet{deFarias-VanRoy-NIPS-2003} propose a two stage procedure, where the above LP is the first stage and the second stage is
\begin{align}
\label{eq:second-stage}
\notag
&\max_{w} c^\top \Psi w\,, \\
&\mbox{s.t.}\quad B(\widetilde\lambda e + \Psi w) \le \ell + P \Psi w \, ,
\end{align}
where $c$ is a user specified weight vector. Let $\widehat w$ be the solution of the second stage. Let $\lambda_w$ and $\mu_w$ be the average loss and the stationary distribution of the greedy policy with respect to $\Psi w$. \citet{deFarias-VanRoy-NIPS-2003} prove that
\[
\lambda_w - \lambda_* \le \norm{h_* - \Psi w}_{1, \mu_w} \; .
\]
Further, it is shown that $\widehat w$ minimizes $\norm{h_{\widetilde\lambda} - \Psi w}_{1, c}$ and that
\[
\norm{h_* - \Psi \widehat w}_{1, c} \le \norm{h_{\widetilde\lambda} - \Psi \widehat w}_{1, c} + (\lambda_* - \widetilde \lambda) c^\top (I - P^{\pi_*})^{-1} e\,,
\]
which implies that $\norm{h_* - \Psi \widehat w}_{1, c}$ is small. To get that $\lambda_{\widehat w} - \lambda_*$ is small, we need to use $c = \mu_{\widehat w}$. Value of  $\mu_{\widehat w}$ is obtained only after solving the optimization problem \eqref{eq:second-stage}. To fix this problem, \citet{deFarias-VanRoy-NIPS-2003} propose to solve \eqref{eq:second-stage} iteratively, using $c = \mu_{\widehat w}$ from the solution of the last round.

The above approach has two problems. First, it is still not clear if the average loss of the resultant policy is close to $\lambda_*$ (or the best policy in the policy class). Second, iteratively solving \eqref{eq:second-stage} is computationally expensive. Similar results are also obtained by \citet{Desai-Farias-Moallemi-2012} who also show that if we were able to sample from the stationary distribution of the optimal policy, then LP~\eqref{eq:primal-apprx} can be solved efficiently.

\citet{Desai-Farias-Moallemi-2012} study a smoothed version of ALP, in which slack variables are introduced that allow for some violation of the constraints. Let $D'$ be a violation budget. The smoothed ALP (SALP) has the form of
\begin{align*}
&\max_{w,s} c^\top \Psi w\,, &\max_{w,s}&\, c^\top \Psi w - \frac{2 \mu_{\pi_*, c}^\top s}{1-\gamma}\,,  \\
&\mbox{s.t.}\quad \Psi w \le L \Psi w + s,\, \mu_{\pi_*, c}^\top s\le D',\, s\ge {\mathbf 0},\, &\mbox{s.t.}\quad & \Psi w \le L \Psi w + s,\, s\ge {\mathbf 0} \; .
\end{align*}
The ALP on RHS is equivalent to LHS with a specific choice of $D'$. Let $\overline{U}=\{u\in\Reals^\cX\ : \ u\ge \mathbf{1} \}$ be a set of weight vectors. \citet{Desai-Farias-Moallemi-2012} prove that if $w_*$ is a solution to above problem, then
\[
\norm{J_* - \Psi w_*}_{1,c} \le \inf_{w, u\in \overline{U}} \norm{J_* - \Psi w}_{\infty, 1/u} \left( c^\top u + \frac{2(\mu_{\pi_*, c}^\top u) (1+ \beta_u)}{1-\gamma} \right) \; .
\]
The above bound improves \eqref{eq:ALP-apprx} as $\overline{U}$ is larger than $U$ and RHS in the above bound is smaller than RHS of \eqref{eq:ALP-apprx}. Further, they prove that if $\eta$ is a distribution and we choose $c=(1-\gamma) \eta^\top (I-\gamma P^{\pi_{\Psi w_*}})$, then
\[
\norm{J_{\mu_{\Psi w_*}}-J_*}_{1,\eta} \le  \frac{1}{1-\gamma} \left( \inf_{w, u\in \overline{U}} \norm{J_* - \Psi w}_{\infty, 1/u} \left( c^\top u + \frac{2(\mu_{\pi_*, \nu}^\top u) (1+\beta_u)}{1-\gamma} \right)  \right) \; .
\]
Similar methods are also proposed by \citet{Petrik-Zilberstein-2009}.
One problem with this result is that $c$ is defined in terms of $w_*$, which itself depends on $c$. 
Also, the smoothed ALP formulation uses $\pi_*$ which is not known. \citet{Desai-Farias-Moallemi-2012} also propose a computationally efficient algorithm. Let $\mathcal S$ be a set of $S$ random states drawn under distribution $\mu_{\pi_*, c}$. Let $\mathcal N'\subset \Reals^d$ be a known set that contains the solution of SALP. The algorithm solves the following LP:
\begin{align*}
&\max_{w,s}\, c^\top \Psi w - \frac{2}{(1-\gamma) S}  \sum_{x\in \mathcal S} s(x)\,,  \\
&\mbox{s.t.}\quad \forall x\in \mathcal S,\, (\Psi w)(x) \le (L \Psi w)(x) + s(x),\, s\ge \mathbf 0,\, w\in \mathcal N' \; .
\end{align*}
Let $\widehat w$ be the solution of this problem.
\citet{Desai-Farias-Moallemi-2012} prove high probability bounds on the approximation error $\norm{J_* - \Psi \widehat w}_{1,c}$. However, it is no longer clear if a performance bound on $\norm{J_* - J_{\pi_{\Psi \widehat w}}}_{1,c}$ can be obtained from this approximation.


Next, we turn our attention to average cost ALP. Let $\nu$ be a distribution over states, $u:[\cX]\rightarrow [1,\infty)$, $\eta>0$, $\gamma\in [0,1]$, $P_\gamma^\pi = \gamma P^\pi + (1-\gamma) \mathbf{1} \nu^\top$, and $L_\gamma h = \min_{\pi} (\ell_\pi + P_\gamma^\pi h)$.
\citet{DeFarias-VanRoy-2006} propose the following optimization problem:
\begin{align}
\label{eq:primal-approx1}
&\min_{w, s_1, s_2} s_1 + \eta s_2\,, \\
\notag
&\mbox{s.t.}\quad L_\gamma \Psi w - \Psi w + s_1 \mathbf{1} + s_2 u \ge \mathbf 0,\, s_2 \ge 0 \; .
\end{align}
Let $(w_*, s_{1,*}, s_{2,*})$ be the solution of this problem.
Define the mixing time of policy $\pi$ by
\[
\tau_\pi = \inf\left\{ \tau \ : \ \abs{\frac{1}{t} \sum_{t'=0}^{t-1} \nu^\top (P^\pi)^{t'} \ell_\pi - \lambda_\pi   } \le \frac{\tau}{t},\, \forall t \right\} \; .
\]
Let $\tau_* = \liminf_{\delta\rightarrow 0} \{ \tau_\pi : \lambda_\pi \le \lambda_* + \delta \}$. Let $\pi_{\gamma}^*$ be the optimal policy when discount factor is $\gamma$. Let $\pi_{\gamma, w}$ be the greedy policy with respect to $\Psi w$ when discount factor is $\gamma$, $\mu_{\gamma, \pi}^\top = 	(1-\gamma) \sum_{t=0}^\infty \gamma^t \nu^\top (P^\pi)^t$ and $\mu_{\gamma, w} = \mu_{\gamma, \pi_{\gamma, w}}$.
 \citet{DeFarias-VanRoy-2006} prove that if $\eta \ge (2-\gamma) \mu_{\gamma, \pi_{\gamma}^*}^\top u$,
\[
\lambda_{w_*} - \lambda_* \le \frac{(1+\beta) \eta \max(D'', 1)}{1-\gamma} \min_{w} \norm{h_\gamma^* - \Psi w}_{\infty, 1/u} + (1-\gamma) (\tau_* + \tau_{\pi_{w_*}}) \,,
\]
where $\beta = \max_\pi \norm{I - \gamma P^\pi}_{\infty, 1/u}$, $D'' = \mu_{\gamma, w_*}^\top V /  (\nu^\top V)$ and $V=L_\gamma \Psi w_* - \Psi w_* + s_{1,*} \mathbf{1} +  s_{2,*} u$. Similar results are obtained more recently by \citet{Veatch-2013}.

An appropriate choice for vector $\nu$ is $\nu=\mu_{\gamma, w_*}$. Unfortunately, $w_*$ depends on $\nu$. We should also note that solving \eqref{eq:primal-approx1} can be computationally expensive. \citet{DeFarias-VanRoy-2006} propose constraint sampling techniques similar to \citep{DeFarias-VanRoy-2004}, but no performance bounds are provided.

\end{document}